\documentclass{amsart}
\usepackage{amssymb,amsfonts, amsthm, color,epsf}
\usepackage{enumerate}
\usepackage{sseq}
\usepackage{mathtools}
\usepackage{leftindex}
\usepackage{ifthen}





\setlength{\parskip}{5pt}
\setlength{\parindent}{0pt}

\usepackage{amsmath}
\usepackage{amscd}
\usepackage{amssymb}
\usepackage{amsthm}
\usepackage{xspace}
\usepackage[all,tips]{xy}
\usepackage{verbatim}
\usepackage{syntonly}
\usepackage{lineno}
\usepackage{todonotes}

\ifx\pdfpageheight\undefined
\usepackage[dvips,colorlinks=true,linkcolor=blue,citecolor=red,%
urlcolor=green]{hyperref}
\usepackage[dvips]{graphicx}
\makeatletter
\edef\Gin@extensions{\Gin@extensions,.mps}
\DeclareGraphicsRule{.mps}{eps}{*}{}
\makeatother
\else
\usepackage[bookmarksopen=false,pdftex=true,breaklinks=true,%
backref=page,pagebackref=true,plainpages=false,%
hyperindex=true,pdfstartview=FitH,colorlinks=true,%
pdfpagelabels=true,colorlinks=true,linkcolor=blue,%
citecolor=red, urlcolor=darkgray, hypertexnames=false%
]%
{hyperref}
\fi

\usepackage{amsrefs}


\newtheorem{theorem}{Theorem}
\newtheorem{corollary}{Corollary}
\newtheorem*{theorem*}{Theorem}

\newtheorem{lemma}{Lemma}[subsection]
\newtheorem{proposition}[lemma]{Proposition}

\theoremstyle{definition}
\newtheorem{definition}{Definition}
\newtheorem{example}{Example}

\newtheorem{notation}{Notation}

\theoremstyle{remark}
\newtheorem{remark}{Remark}
\newtheorem{observation}{Observation}

\newcommand{\R}{\mathrm{R}}

\newcommand {\junk}[1]{}
\newcommand {\hide}[1]{}

\renewcommand {\prod} {\Pi}

\newcommand {\RR} {\mathrm{Reali}}
\newcommand {\card} {\mathrm{card}}

\newcommand{\Q}{\mathbb{Q}}

\newcommand {\HH}{\mathrm{H}}

\newcommand{\cH}{\mathcal{H}}
\newcommand{\eps}{\varepsilon}


\newcommand{\bbA}{{\mathbb A}}

\newcommand{\bbC}{{\mathbb C}}

\newcommand{\bbF}{{\mathbb F}}

\newcommand{\bbM}{{\mathbb M}}
\newcommand{\bbN}{{\mathbb N}}

\newcommand{\bbP}{{\mathbb P}}
\newcommand{\bbQ}{{\mathbb Q}}
\newcommand{\bbR}{{\mathbb R}}
\newcommand{\bbS}{{\mathbb S}}

\newcommand{\bbZ}{{\mathbb Z}}

\newcommand{\bQ}{{\bf Q}}

\newcommand{\bT}{{\bf T}}

\newcommand{\bX}{{\bf X}}

\newcommand{\bZ}{{\bf Z}}

\newcommand{\frX}{{\mathfrak X}}

\newcommand{\frZ}{{\mathfrak Z}}

\newcommand{\cA}{{\mathcal A}}
\newcommand{\cB}{{\mathcal B}}
\newcommand{\cC}{{\mathcal C}}

\newcommand{\cF}{{\mathcal F}}
\newcommand{\cG}{{\mathcal G}}

\newcommand{\cL}{{\mathcal L}}

\newcommand{\cK}{{\mathcal K}}
\newcommand{\cM}{{\mathcal M}}
\newcommand{\cN}{{\mathcal N}}

\newcommand{\cS}{{\mathcal S}}

\newcommand{\cX}{{\mathcal X}}
\newcommand{\cY}{{\mathcal Y}}

\newcommand{\rD}{{\rm D}}
\newcommand{\rE}{{\rm E}}

\newcommand{\rH}{{\rm H}}

\newcommand{\rR}{{\rm R}}

\newcommand{\Aut}{{\rm Aut}}

\newcommand{\Spec}{{\rm Spec}}

\newcommand{\isom}{\cong}

\newcommand{\vcd}{\mathrm{vcd}}
\renewcommand{\deg}{\mathrm{deg}}

\newcommand{\cosk}{\mathrm{cosk}}

\begin{document}
\title{Cohomological VC-density: Bounds and Applications}

\author{Saugata Basu}
\address{Department of Mathematics,
Purdue University, West Lafayette, IN 47906, U.S.A.}
\email{sbasu@math.purdue.edu}

\author{Deepam  Patel}
\address{Department of Mathematics, 
Purdue University, West Lafayette, IN 47906, U.S.A.}
\email{patel471@purdue.edu}

\date{\textbf{\today}}
\thanks{
Basu was  partially supported by NSF grant CCF-1910441. Patel was partially supported by a Simons Collaboration Grant.
}
\begin{abstract}
\hide{
The concept of Vapnik-Chervonenkis (VC) density is pivotal across various mathematical fields, including model theory, discrete geometry, and probability theory. In this paper, we introduce a topological generalization of VC-density.

Let $Y$ be a topological space and $\mathcal{X},\mathcal{Z}^{(0)},\ldots,\mathcal{Z}^{(q-1)}$ be families of
subspaces of $Y$.
We define a two parameter family of numbers, $\vcd^{p,q}_{\mathcal{X},\overline{\mathcal{Z}}}$,
which we refer to as the degree $p$, order $q$, VC-density of the pair
\[
(\mathcal{X},\overline{\mathcal{Z}} = (\mathcal{Z}^{(0)},\ldots,\mathcal{Z}^{(q-1)}).
\]
The classical notion of VC-density within this topological framework can be recovered by setting  $p=0, q=1$. For $p=0, q > 0$, we recover 
Shelah’s notion of higher-order VC-density for $q$-dependent families  \cite{Shelah2014}. 
Our definition introduces a new notion when $p>0$.

We examine the properties of $\vcd^{p,q}_{\mathcal{X},\overline{\mathcal{Z}}}$
when the families $\mathcal{X}$ and $\mathcal{Z}^{(i)}$ are definable in structures with some underlying topology (for instance, the analytic topology over $\bbC$, the \'{e}tale site for schemes over arbitrary algebraically closed fields, or the Euclidean topology for o-minimal structures over $\bbR$). 
Our main result establishes that in any model of these theories 
\[
\vcd^{p,q}_{\mathcal{X},\overline{\mathcal{Z}}} \leq (p+q) \dim X.
\]
This result generalizes known VC-density bounds in these structures \cites{RBG01, Basu10, Starchenko-et-al}, extending them in multiple ways, as well as providing a uniform proof paradigm applicable to all of them.
We give examples to show that our bounds are optimal. Moreover, our bounds on  $0/1$-patterns actually goes beyond model-theoretic contexts: they apply to arbitrary correspondences of schemes with respect to singular, étale, or 
$\ell$-adic cohomology theories. A particular consequence of our results is the extension of the bound on 
$0/1$-patterns for definable families in affine spaces over arbitrary fields, as initially proven in \cite{RBG01}, to general schemes.

We present combinatorial applications of our higher-degree VC-density bounds, deriving topological analogs of well-known results such as the existence of 
$\eps$-nets and the fractional Helly theorem. We show that with certain restrictions, these results extend to our higher-degree topological setting.
}
The concept of Vapnik-Chervonenkis (VC) density is pivotal across various mathematical fields, including discrete geometry, probability theory and model theory. In this paper, we introduce a topological generalization of VC-density.

Let $Y$ be a topological space and $\mathcal{X}$ a family of
closed subspaces of $Y$.
For each $p \geq 0$, 
we define a number, $\vcd^{p}_{\mathcal{X}}$,
which we refer to as the degree $p$ VC-density of the family $\mathcal{X}$.
The classical notion of VC-density within this topological framework can be recovered by setting  $p=0$. 
Our definition of degree $p$ VC-density extends to higher orders
as well.
For $p\geq 0, q \geq  1$, we define the degree $p$, order $q$ VC density
$\vcd^{p,q}_{\mathcal{X}}$ of $\mathcal{X}$, which recovers
Shelah’s notion of higher-order VC-density for $q$-dependent families  \cite{Shelah2014} when $p=0$. 
Our definition introduces a completely new notion when $p>0$.

We examine the properties of $\vcd_{\mathcal{X}}^p$ 
(as well as $\vcd^{p,q}_{\mathcal{X}}$)
when the families $\mathcal{X}$ are definable in structures with some underlying topology (for instance, 
the Euclidean topology for o-minimal structures over $\bbR$,
the analytic topology over $\bbC$, or  the \'{e}tale site for schemes over arbitrary algebraically closed fields). 
Our main result establishes that in any model of these theories 
\[
\vcd_{\mathcal{X}}^p \leq (p+1) \dim X,
\]
and more generally for any $q \geq 1$
\[
\vcd^{p,q}_{\mathcal{X}} \leq (p+q) \dim X.
\]
These results generalize known VC-density bounds in these structures \cites{RBG01, Basu10, Starchenko-et-al}, extending them in multiple ways, as well as providing a uniform proof paradigm applicable to all of them.
We give examples to show that our bounds are optimal. 

Our bounds 
actually go beyond model-theoretic contexts: they apply to arbitrary correspondences of schemes with respect to singular, étale, or 
$\ell$-adic cohomology theories. A particular consequence of our results is the extension of the bound on 
$0/1$-patterns for definable families in affine spaces over arbitrary fields, as initially proven in \cite{RBG01}, to general schemes.
We also present combinatorial applications of our higher-degree VC-density bounds, deriving higher degree topological analogs of well-known results such as the existence of 
$\eps$-nets and the fractional Helly theorem. 
 \end{abstract}
 
 \subjclass[2000]{Primary 03C52, 14F06; Secondary 14F20, 14F25}
\keywords{Constructible sheaves, schemes, cohomology, Vapnik-Chervonenkis density, NIP theories, o-minimal}
\maketitle

\tableofcontents

\section{Introduction}

\label{sec:intro}
Let $Y$ be a set and $\mathcal{X} \subset 2^Y$ a set of subsets of $Y$.
For any subset $Y'$ of $Y$, we 
set
\[
S(Y';\mathcal{X}) = \{Y' \cap \bX \mid \bX \in \mathcal{X}\}.
\]
One says that $\mathcal{X}$ \emph{shatters} $Y'$, if
$S(Y';\mathcal{X}) = 2^{Y'}$. In the following, we shall use the convenient notation  
\[
Y' \subset_n Y
\] 
to denote 
\[
Y' \subset Y \mbox{ and }  \card(Y') = n.
\]
In this setting, a famous result due to Sauer and Shelah asserts that:

\begin{theorem*}[Sauer-Shelah Lemma,\cites{Sauer, Shelah}] 
\label{lem:Sauer-Shelah}
Let
\begin{equation}
\label{eqn:nu}
\nu_{\mathcal{X}}(n) :=  \max_{Y' \subset_n Y} \card(S(Y';\mathcal{X})).
\end{equation}

Then, either for all $n > 0$,
        \[
        \nu_{\mathcal{X}}(n) = 2^n,
        \]
or there exists $c ,d  > 0$ such that
        \[
        \nu_{\mathcal{X}}(n) < c \cdot n^d,
        \]
for all $n>0$ (here $c,d$ are independent of $n$).
\end{theorem*}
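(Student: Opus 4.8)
The plan is to reduce the stated dichotomy to the classical quantitative \emph{Sauer--Shelah inequality} and then observe that a shattering function is polynomially bounded as soon as it is not identically $2^n$. The first step is the elementary remark that the set
\[
D := \{ m \mid \text{some } Y' \subset_m Y \text{ is shattered by } \mathcal{X}\}
\]
is downward closed: if $Y' \subset_m Y$ is shattered and $Y'' \subseteq Y'$, then for any $B \subseteq Y''$ there is $\bX \in \mathcal{X}$ with $\bX \cap Y' = B$, whence $\bX \cap Y'' = B$, so $Y''$ is shattered too. Hence exactly one of two situations holds. If $D$ is unbounded, then by downward closure there is for every $n$ a shattered $Y' \subset_n Y$, and $S(Y';\mathcal{X}) = 2^{Y'}$ forces $\nu_{\mathcal{X}}(n) = 2^n$; this is the first alternative. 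If $D$ is bounded, let $d := \max D$ (the VC-dimension of $\mathcal{X}$), and it remains to produce the polynomial bound.

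For the bounded case I would isolate the following purely combinatorial statement, which is the heart of the matter: if $W$ is a finite set with $\card(W) = n$ and $\mathcal{F} \subseteq 2^W$ shatters no subset of $W$ of size $d+1$, then $\card(\mathcal{F}) \le \Phi_d(n) := \sum_{i=0}^{d}\binom{n}{i}$. Applying this with $W = Y'$ and $\mathcal{F} = S(Y';\mathcal{X})$ — and using the easy fact that $S(Y';\mathcal{X})$ shatters a subset $Y'' \subseteq Y'$ precisely when $\mathcal{X}$ does — yields $\nu_{\mathcal{X}}(n) = \max_{Y' \subset_n Y} \card(S(Y';\mathcal{X})) \le \Phi_d(n)$ for every $n$. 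The combinatorial statement I would prove by induction on $n$, with $d$ as a parameter and the convention $\Phi_{-1} \equiv 0$. Fix $w \in W$, put $W' = W \setminus \{w\}$, and split $\mathcal{F}$ into $\mathcal{F}_0 := \{ A \setminus \{w\} \mid A \in \mathcal{F}\}$ and $\mathcal{F}_1 := \{ A \subseteq W' \mid A \in \mathcal{F} \text{ and } A \cup \{w\} \in \mathcal{F}\}$, so that $\card(\mathcal{F}) = \card(\mathcal{F}_0) + \card(\mathcal{F}_1)$. Now $\mathcal{F}_0$ shatters no $(d{+}1)$-subset of $W'$ (inherited from $\mathcal{F}$), while $\mathcal{F}_1$ shatters no $d$-subset of $W'$, since a set $B \subseteq W'$ shattered by $\mathcal{F}_1$ would exhibit $B \cup \{w\}$ as shattered by $\mathcal{F}$. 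The inductive hypothesis then gives $\card(\mathcal{F}_0) \le \Phi_d(n-1)$ and $\card(\mathcal{F}_1) \le \Phi_{d-1}(n-1)$, and Pascal's identity $\Phi_d(n) = \Phi_d(n-1) + \Phi_{d-1}(n-1)$ closes the induction.

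To finish the bounded case, for every $n \ge 1$ and $0 \le i \le d$ we have $\binom{n}{i} \le n^i \le n^d$, hence $\nu_{\mathcal{X}}(n) \le \Phi_d(n) \le (d+1)\,n^d < c\cdot n^d$ with $c = d+2$ (replacing $d$ by $1$ if it happens to be $0$, so that $c,d>0$ as required); this is the second alternative, and the two alternatives are plainly exhaustive. The step I expect to demand the most care is the dimension-drop for the auxiliary family $\mathcal{F}_1$ in the induction — verifying that shattering a set by the ``doubled'' family $\mathcal{F}_1$ genuinely produces a shattered set one larger for $\mathcal{F}$ — together with the bookkeeping that passing from $\mathcal{X}$ to its trace $S(Y';\mathcal{X})$ neither creates nor destroys shattered subsets of $Y'$; everything else is a routine induction and an elementary estimate. (Alternative proofs of the inequality via down-shifting/compression or the linear-algebra method of Frankl--Pach could be substituted for the induction, but the inductive argument is the most self-contained.)
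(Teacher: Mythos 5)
Your proof is correct, and it is the standard inductive proof of the Sauer--Shelah inequality (the exponent-drop argument attributed to Pajor, essentially the same argument that appears in Shelah's and Sauer's original papers). For comparison purposes, note that the paper does not in fact prove this statement: it is stated as a background theorem with a citation to \cites{Sauer, Shelah}, so there is no proof in the paper to match your argument against. Your reduction of the dichotomy to the quantitative bound $\nu_{\mathcal{X}}(n) \le \Phi_d(n) = \sum_{i=0}^{d}\binom{n}{i}$, via the downward-closure of the set of shattered sizes, is exactly the right way to organize this; the split $\mathcal{F} = \mathcal{F}_0 \sqcup \mathcal{F}_1$ with the count $\card(\mathcal{F}) = \card(\mathcal{F}_0) + \card(\mathcal{F}_1)$, the dimension drop for $\mathcal{F}_1$, and Pascal's identity all check out, and the trace bookkeeping (that $S(Y';\mathcal{X})$ shatters $Y'' \subseteq Y'$ iff $\mathcal{X}$ does) is an immediate unfolding of the definitions. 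The closing estimate $\Phi_d(n) \le (d+1)n^d$ for $n \ge 1$, and the adjustment to guarantee $c,d > 0$ when the VC-dimension is $0$, are fine. The one thing you could tighten is the base case of the induction (either $n=0$ or $d=-1$): you invoke the convention $\Phi_{-1} \equiv 0$, which amounts to the observation that a nonempty family always shatters the empty set, so a family shattering no $0$-subset is empty; it is worth spelling this out since it is where the induction bottoms out.
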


The family $\mathcal{X}$ of sets is said to have finite \emph{Vapnik-Chervonenkis (henceforth VC)
dimension} if the second alternative is true in the above theorem. The \emph{Vapnik-Chervonenkis density of $\mathcal{X}$}, denoted by $\vcd_{\mathcal{X}}$, is defined by
\begin{equation}
    \label{eqn:def:vd-density}
    \vcd_{\mathcal{X}} = \limsup_n \frac{\log(\nu_{\mathcal{X}}(n))}{ \log(n)}.
\end{equation}

The Vapnik-Chervonenkis density is an important measure of `tameness'
or `complexity'  of set systems (see \cite{VC-book}). 
It plays an extremely important role in many different areas of mathematics including -- 
\begin{enumerate}
\item discrete and computational geometry, in proving the existence of $\eps$-nets of constant size (\cites{Haussler-Welzl,Komlos-Pach}), proving fractional Helly-type theorems (\cite{Matousek2004}), and studying point configurations over finite fields (\cite{Iosevich-et-al}),
\item in graph theory, related to the Erd\H{o}s-Hajnal property \cites{nguyen2024,CST2021,Basu10},
\item incidence combinatorics, 
for proving incidence bounds in distal structures \cite{Chernikov-et-al-2020}, bounds for the Zarankiewicz problem for semi-algebraic graph classes \cite{Pach-et-al} which includes point-hypersurface incidence problems, and bounding incidences over finite fields
\cite{Iosevich-et-al-incidence},
\item probability theory, for proving uniform convergence in certain situations (\cite{VC}),  

\item machine learning theory, in particular probably-approximately-correct (PAC) learning (\cite{Blumer-et-al}), 
\item model theory, where it is connected with the notion of the independence property of formulas (\cite{Simon}), 
\end{enumerate}
and a host of other applications. We refer the reader to \cite{Tenka-survey}
for a survey of some of these applications. 

Moreover, obtaining tight upper bounds on this density plays a central role in these applications and is considered to be an important problem. As a result, this problem has been considered from many different perspectives. We refer the reader to  \cites{Bartlett-et-al, McDonald-et-al, Starchenko-et-al, BP1} for some recent work on this topic.

While in the definition of VC-density it makes sense to consider arbitrary set systems $\mathcal{X} \subset 2^Y$, in model-theoretic applications one only considers definable families $\mathcal{X}$ of subsets of some definable set $Y$. More precisely, 
fix a theory $\bT$ (for example, the theory of algebraically closed fields, or the theory of any o-minimal expansion of a real closed field)  and 
suppose that
$X,Y$ are definable subsets in some model $\bbM$ of $\bT$, and 
$H \subset X \times Y$ a definable subset. 

Then, 
\begin{eqnarray}
\label{eqn:def-of-X}
\mathcal{X} &:=& \{\leftindex_{x}{H} \mid x \in X\},  \\
\label{eqn:def-of-Y}
\mathcal{Y} &:=& \{H_y \mid y \in Y \}
\end{eqnarray}
are definable families of 
subsets of $Y$ and $X$ respectively,
where 
\[
\leftindex_{x}{H} = \pi_Y(\pi_X^{-1}(x) \cap H), 
H_y = \pi_X(\pi_Y^{-1}(y) \cap H),
\]
and 
$\pi_X: X \times Y \rightarrow X$, $\pi_Y: X \times Y \rightarrow Y$
are the projection maps. 
Below we shall sometimes refer to $\vcd_{\mathcal{X}}$ as the \emph{VC-codensity} of 
$\mathcal{Y}$. 

A formula $\phi(x;y)$ has the independence property in the theory  $\bT$, if for each $n >0$, $\bT \vdash I_n$,
where 
\[
I_n := (\exists x_0) \cdots (\exists x_{n-1}) (\exists y_\emptyset)\cdots 
(\exists y_W) \cdots (\exists y_{2^n})
\bigwedge_{i \in W} \phi(x_i,y_W) \wedge \bigwedge_{i \not\in W} \neg\phi(x_i,y_W).
\]
In other words, in any model of $\bbM$ of $\bT$, and for every $n >0$, the definable family in $\bbM^{|x|}$ defined by $\phi(x;y)$ shatters
some subset of $\bbM^{|x|}$ of cardinality $n$.

We say that the theory $\bT$ has the independence property if some formula
$\phi(x;y)$ has the independence property. We say that $\bT$ is an NIP
theory if $\bT$ does not have the independence property. Using the Sauer-Shelah Lemma, NIP
theories are precisely those for which
$\vcd_{\mathcal{X}} < \infty$ for all definable families.

The NIP property is of fundamental importance in model theory (see \cite{Simon}). Many
well known theories have the NIP property 
including:
\begin{enumerate}[(a)]
    \item 
    \label{itemlabel:a}
    theory of algebraically closed fields of characteristic $0$ (denoted $\mathrm{ACF}(0)$), 
    \item 
    \label{itemlabel:b}
    theory algebraically closed valued fields of characteristic $p >0$ (denoted $\mathrm{ACF}(p)$), 
    \item 
    \label{itemlabel:c}
    the theory of real closed fields (denoted $\mathrm{RCF}$), 
    and more generally theories of o-minimal expansions of 
    $\mathbb{R}$,
    \item 
    \label{itemlabel:d}the theory of algebraically closed valued fields with characteristics pair $(p,p'$) (denoted $\mathrm{ACVF}(p,p')$).
\end{enumerate}

In each of the NIP theories 
listed above
and assuming that $X \subset \bbM^k$ (see \eqref{eqn:def-of-X}), one has a bound
\begin{equation}
\label{eqn:vcd-bound}
\vcd_{\mathcal{X}} \leq k.
\end{equation}

The proofs of inequality \eqref{eqn:vcd-bound} for the different 
theories listed above appears in \cite{RBG01} for theories
\eqref{itemlabel:a} and \eqref{itemlabel:b}, in
\cite{Basu10} for \eqref{itemlabel:c}, and 
\cite{Basu-Patel2} for \eqref{itemlabel:d}.

In order to bound $\vcd_{\mathcal{X}}$,
it is often useful to consider the dual family $\mathcal{Y}$ of subsets of $X$, and bound the number of {\it realizable $0/1$-patterns} for any 
$n$ members of this family. 

Given any set $S$, and $\mathcal{F} \subset_n 2^S$,
a $0/1$-pattern
on $\mathcal{F}$ is an element of $\{0,1\}^{\mathcal{F}}$. We say that a $0/1$-pattern $\sigma \in \{0,1\}^{\mathcal{F}}$ is realizable if and only if $\exists s \in S, \chi_{F}(s) = \sigma(F)$ for all $F \in \mathcal{F}$, where we denote by $\chi_F$ the characteristic function of $F$. Denote by $\hat{\nu}_{\mathcal{Y}}(n)$ the maximum number of realizable 
$0/1$-patterns where the maximum is taken over all finite subsets
$\mathcal{Y}' \subset_n \mathcal{Y}$. It is an easy exercise to check that
\[
\nu_{\mathcal{X}}(n) = \hat{\nu}_{\mathcal{Y}}(n).
\]
In particular, the VC-codensity of $\mathcal{Y}$ (or equivalently the VC-density of $\mathcal{X}$)
is equal to 
\[
\limsup_n \frac{\log \hat{\nu}_\mathcal{Y}(n)}{\log n}.
\]

Inequality \eqref{eqn:vcd-bound} (and its dual version which gives
bounds on realizable $0/1$-patterns)
plays a fundamental role in many applications in combinatorics, discrete geometry as well as in theoretical computer science. 
For example, 
Ronyai, Babai and Ganapathy in \cite{RBG01} proved 
a tight bound on the number of $0/1$-patterns of finite sets of polynomials
over an arbitrary field, 
from which one can deduce immediately
that the VC-codensity of the family 
\[
\left(\{f \in k[X_1,\ldots,X_m]_{\leq d} \mid f(x) = 0 \}\right)_{x \in k^m}
\]
is equal to $m$.
They utilized their bound on the number of $0/1$-patterns
to prove tight bounds on projective dimensions of graphs, and of span programs. 
In the setting of ordered fields, 
similar bounds on the number of sign conditions (instead of $0/1$-patterns) play a key role
in obtaining upper bounds on the speed of semi-algebraically defined graph classes \cite{Sauermann}. 
Bounds of VC-codensity enter into the bounds on sizes of epsilon nets \cites{Haussler-Welzl,Komlos-Pach}, 
the fractional Helly number \cite{Matousek2004}, in recent work on
bounding incidences \cites{Iosevich-et-al-incidence, Chernikov-et-al-2020}, 
in machine-learning 
\cite{Blumer-et-al} and in many other applications.

Suppose that $\bbM$ is a model of any of the theories listed before. 
The definition of VC-density is related to cardinalities of finite 
subsets of $Y = \bbM^d$.  From the perspective of applications in discrete geometry and incidence combinatorics, it is very natural to replace finite point sets by finite \emph{arrangements of  sets} belonging to some fixed definable family. 
For example, taking $\bbM = \bbR$, 
the combinatorial study of finite sets of points of $\bbR^d$ (for some $d > 0$)  is equivalent using (projective or affine) duality to studying finite hyperplane arrangements. Combinatorial properties of real hyperplane arrangements in affine and projective spaces have been studied in great depth (see \cite{Pach-Sharir}*{Chapter 2}). The next step has been to study the complexity of arrangements of more general subsets. For example, real sub-varieties or semi-algebraic sets defined by polynomials of degrees bounded by some constants (see \cite{Agarwal-Sharir}).

Another instructive example of a generalization of 
combinatorial results from points to higher dimensional arrangements is the polynomial partitioning theorem originally proved by Guth and Katz for finite sets of points in $\bbR^d$ \cite{Guth-Katz} and which plays an extremely important role in many recent works on incidence combinatorics and harmonic analysis. This result was later generalized by Guth \cite{Guth} to a polynomial 
partition theorem for finite sets of real varieties rather than points, and this generalized partition theorem has found 
new applications (see for example \cite{Aronov-et-al}).

A third example of this nature in discrete geometry 
is the generalization of the classical
Helly's theorem for convex sets (which can be viewed as a theorem about point transversals), to the case of higher dimensional transversals
where points are replaced by hyperplanes due to Goodman and Pollack \cite{GP}
(see also \cite{GPW}).

It is thus natural to try to generalize the notion of VC-density
defined in \eqref{eqn:def:vd-density} to a setting where in the 
definition of 
$\nu_{\mathcal{X}}(n)$ in \eqref{eqn:nu}, the 
elements of the sets $Y'$ are subsets
(belonging to some definable family) rather than points of $Y$
(in the same spirit as polynomial partitioning theorem of varieties
\cite{Guth} generalizes that of points in \cite{Guth-Katz}).
We could then aim to generalize the fundamental inequality \eqref{eqn:vcd-bound} in this setting. Such a generalization would be a natural extension (in view of the prior developments in the theory of arrangements discussed in the previous paragraph), and we believe will open up a new dimension of combinatorial problems in the study of arrangements.

Motivated by applications to higher dimensional analogs of the aforementioned applications of VC-density, and in particular, the considerations of the previous paragraph, the main goal of this article is to define a higher dimensional analog of VC-density, prove tight bounds for this higher VC-density, and provide some applications to  discrete geometry. 

As a point of departure, we note that the proofs of inequality \eqref{eqn:vcd-bound} in different theories are quite different and some of these proofs have a {\it topological} flavor. Indeed, there is often a natural topology (or more generally a Grothendieck topology) on the definable sets (for example, the Zariski or \'{e}tale sites in the case of $\mathrm{ACF}$ , the euclidean topology in the case of RCF and o-minimal theories). In this article, we leverage this additional topological structure in such settings in order to define higher degree notion of VC-density. Our generalization of VC-density will be indexed by a bidegree $\vcd^{p,q}$ (with $p,q \in \bbN, p \geq 0, q \geq 1$) and so that $\vcd^{0,1}$ recovers the classical notion discussed above.

\subsection{Cohomological VC-density}
The first obstacle one meets in attempting to generalize the notion of VC-density is observed in providing an interpretation of $\card(S(Y';\mathcal{X}))$ in the setting where elements of $Y'$ are allowed to be definable subsets of $Y$. Below we provide an interpretation of this object for such $Y'$ in the setting where our structures are equipped with a topology. 

\begin{remark}
In fact, we do not require that our structures are equipped with a topology. Rather, the construction below works in any structure equipped with a Grothendieck topology. For example, our results are applicable in the $\mathrm{ACF}$  setting, with the underlying Grothendieck topology given by the the \'{e}tale topology (as discussed in the text).
\end{remark}

Suppose now that $Y$ is a topological space and $\mathcal{X}$ is a set of \emph{closed} subspaces of $Y$. Below, we denote by $\rH_i(Y,\bbQ)$ (respectively $\rH^i(Y,\bbQ)$) the usual singular homology (respectively cohomology) of $Y$
 with rational coefficients. Given any finite subset $Y' \subset Y$, the different intersections
$Y' \cap \bX, \bX \in \mathcal{X}$ are each characterized by the image
of the linear map $$\HH_0(Y' \cap \bX,\bbQ) \rightarrow \HH_0(Y',\bbQ).$$
Recall that for a discrete set of points $P$, the corresponding $\HH_0(P,\bbQ)$ is a $\bbQ$-vector space with basis given by $P$. The linear map above is then the natural inclusion, and the image subspace characterizes the set $Y' \cap \bX$.

In what follows it will be more convenient to work with cohomology rather than homology, and the corresponding cohomological statement is that $Y' \cap \bX, \bX \in \mathcal{X}$ are each characterized by the kernels of the linear map $\HH^0(Y',\bbQ) \rightarrow \HH^0(Y' \cap \bX,\bbQ)$ (induced by restriction). More precisely, for any subspace $Y' \subset Y$,
we denote by $\HH^i(Y',\bbQ)$ the sheaf cohomology
of $Y'$ with values in the constant sheaf $\Q$.\footnote{For good spaces, and in particular all those appearing in this paper, these are the usual singular cohomology groups.} Then, for
any $Y' \subset_n Y$, and $\bX \in \mathcal{X}$,
$Y' \cap \bX$ is determined by the kernel of the homomorphism
\[
\HH^0(Y',\bbQ) \rightarrow \HH^0(Y' \cap \bX, \bbQ),
\]
\begin{observation}
With notation  as above, let 
\[
\bbS^0(Y';\mathcal{X}) = \{\ker(\HH^0(Y',\bbQ) \rightarrow \HH^0(Y' \cap \bX, \bbQ)) \mid \bX \in \mathcal{X}\}.
\]

It follows that
\[
\card(S(Y';\mathcal{X})) = \card(\bbS^0(Y';\mathcal{X})).
\]
\end{observation}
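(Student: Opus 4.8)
The plan is to unwind the definition of $\HH^0$ for a finite discrete space and then exhibit an explicit bijection between $S(Y';\mathcal{X})$ and $\bbS^0(Y';\mathcal{X})$. First I would recall that for a finite set $P$ with the discrete topology — which is the topology induced on any $Y' \subset_n Y$ — the group $\HH^0(P,\bbQ)$ is canonically identified with the $\bbQ$-vector space $\bbQ^P$ of all functions $P \to \bbQ$, with distinguished basis $\{e_y\}_{y \in P}$ consisting of the indicator functions of the points. Under this identification, for $\bX \in \mathcal{X}$ the restriction homomorphism $\HH^0(Y',\bbQ) \to \HH^0(Y' \cap \bX,\bbQ)$ induced by the inclusion $Y' \cap \bX \hookrightarrow Y'$ is simply the coordinate projection $\bbQ^{Y'} \to \bbQ^{Y'\cap\bX}$; hence its kernel is exactly $K_{\bX} := \mathrm{span}_{\bbQ}\{e_y \mid y \in Y'\setminus \bX\}$.

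Next I would observe that the assignment $A \mapsto \mathrm{span}_{\bbQ}\{e_y \mid y \in Y'\setminus A\}$, defined on subsets $A \subseteq Y'$, is injective: since the $e_y$ form a basis of $\bbQ^{Y'}$, from the subspace $W = \mathrm{span}_{\bbQ}\{e_y \mid y \notin A\}$ one recovers $Y'\setminus A$ as $\{y \in Y' \mid e_y \in W\}$, and therefore $A$. Applying this with $A = Y'\cap\bX$ shows that for $\bX, \bX' \in \mathcal{X}$ one has $Y'\cap\bX = Y'\cap\bX'$ if and only if $K_{\bX} = K_{\bX'}$. Consequently $Y'\cap\bX \mapsto K_{\bX}$ is a well-defined bijection from $S(Y';\mathcal{X})$ onto $\bbS^0(Y';\mathcal{X})$, and passing to cardinalities gives the claimed equality $\card(S(Y';\mathcal{X})) = \card(\bbS^0(Y';\mathcal{X}))$.

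The only point requiring any care — and the reason the statement is recorded as an Observation rather than left entirely to the reader — is checking that the identification $\HH^0(Y',\bbQ) \cong \bbQ^{Y'}$, together with its compatibility with restriction, holds in whatever topological or Grothendieck-topological framework is in force. In the topological case this is standard: a finite subset of a reasonable ambient space is discrete, so the constant sheaf $\bbQ$ has global sections $\bbQ^{Y'}$, higher cohomology vanishes, and functoriality of $\HH^0$ for the open-and-closed inclusion $Y'\cap\bX \hookrightarrow Y'$ yields the coordinate projection. In the étale setting one instead uses that a finite set of closed points is a finite disjoint union of copies of the spectrum of the (separably closed) base field, on each of which $\HH^0$ of the constant sheaf is one-dimensional, and the restriction maps again become coordinate projections. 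I do not expect a genuine obstacle here; the mathematical content of the Observation is the elementary linear-algebra bookkeeping of the first two steps, and the third step merely ensures that this bookkeeping transports verbatim to each of the theories considered in the paper.
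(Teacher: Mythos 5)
Your proof is correct and follows exactly the approach the paper sketches: identify $\HH^0(Y',\bbQ)$ with $\bbQ^{Y'}$ for the finite discrete set $Y'$, observe the restriction map is the coordinate projection whose kernel is the span of the $e_y$ with $y \notin \bX$, and note the resulting assignment $Y'\cap\bX \mapsto K_{\bX}$ is a bijection because distinct subsets of a basis span distinct coordinate subspaces. You have merely spelled out the linear-algebra bookkeeping that the paper leaves implicit in its remark that ``$Y' \cap \bX$ is determined by the kernel.''
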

This is the observation that is at the heart of our generalization of VC-density to higher degrees and suggests a natural generalization of the notion of VC-density to higher (cohomological) degrees.

We shall now state our notion of higher degree VC-density. 
We first define our notion in a general topological context
and then specialize it to the more restrictive model theoretic situations.

Let $Y$ be a topological space and
$\mathcal{X}, \mathcal{Z} \subset 2^Y$ be sets of \emph{closed}  subspaces of $Y$. Let
$\mathcal{Z}_0 \subset_n \mathcal{Z}$,
and
let $\bigcup \mathcal{Z}_0$ denote  $\bigcup_{\bZ \in \mathcal{Z}_0} \bZ$. For each $p \geq 0$, define
\[
\bbS^p(\mathcal{Z}_0;\mathcal{X}) = \{\ker(\HH^p(\bigcup \mathcal{Z}_0,\bbQ) \rightarrow \HH^p(\bigcup \mathcal{Z}_0 \cap \bX, \bbQ)) \mid \bX \in \mathcal{X}\}.
\]

We define the \emph{degree-$p$ VC-density of 
$\mathcal{X},\mathcal{Z}$} by:

\begin{equation}
\label{eqn:def:vcd:p'}
\vcd^p_{\mathcal{X},\mathcal{Z}} := 
\limsup_{n}  \frac{ \log(\nu_{\mathcal{X},\mathcal{Z}}^p(n))}{\log(n)},
\end{equation}
where
\begin{equation}
\label{eqn:chi-C:p}
\nu^p_{\mathcal{X},\mathcal{Z}} (n) =  \sup_{\mathcal{Z}_0 \subset_n \mathcal{Z}} \card(\bbS^p(\mathcal{Z}_0;\mathcal{X})).
\end{equation}

In any of the definable contexts that we consider in this paper, 
namely in models of the
theory of an o-minimal expansion of $\mathbb{R}$, $\mathrm{RCF}$,
$\mathrm{ACF}(0)$ or  $\mathrm{ACF}(p)$, 
we restrict to definable sets $Y$, and definable families $\mathcal{X},\mathcal{Z}$ of closed subspaces of $Y$. We define:

\begin{equation}
\label{eqn:def:vcd:p}
\vcd^p_{\mathcal{X}} := 
\max_{\mathcal{Z}} 
\limsup_{n}  \frac{ \log(\nu_{\mathcal{X},\mathcal{Z}}^p(n))}{\log(n)},
\end{equation}
where the maximum in \eqref{eqn:def:vcd:p}
is taken over all \emph{definable} families $\mathcal{Z}$ of closed subspaces  of $Y$. Notice that unlike in \eqref{eqn:def:vcd:p'}, the 
definition of $\vcd_{\mathcal{X}}^p$ in \eqref{eqn:def:vcd:p} depends only on one definable family $\mathcal{X}$ of closed subsets of $Y$.  

One key observation is the following.

In any model of the theories 
an o-minimal expansion of $\mathbb{R}$, $\mathrm{RCF}$,
$\mathrm{ACF}(0)$ or $\mathrm{ACF}(p)$,
with $\mathcal{X},\mathcal{Z}$ definable families of closed definable 
subsets of a proper definable set $Y$,
\begin{equation}
\label{eqn:finite}
\card(\bbS^p(\mathcal{Z}_0;\mathcal{X})) < \infty,
\end{equation}
for all finite subsets $\mathcal{Z}_0 \subset \mathcal{Z}$.

Note that the finiteness of $\bbS^p(\mathcal{Z}_0;\mathcal{X})$ does not follow from standard
model-theoretic arguments but relies on the properties of the 
underlying topology. Indeed it is a special case of the results proved in this paper 
(Theorem~\ref{thm:main} below).

The main result of our paper is the following theorem.

\begin{theorem}
\label{thm:informal}
Let $\mathbf{T}$ be 
one of the theories:  an o-minimal expansion of $\mathbb{R}$, $\mathrm{RCF}$,
$\mathrm{ACF}(0)$ or $\mathrm{ACF}(p)$.
For every definable family $\mathcal{X} = \{\leftindex_{x}{H} \mid x \in X\},$ of closed definable subsets of some proper definable set $Y$ in any model of $\mathbf{T}$, and $p \geq 0$,
\[
\vcd^p_{
\mathcal{X}
} 
\leq (p+1) \cdot \dim X.
\]
\end{theorem}

Inequality \eqref{eqn:vcd-bound} (in the theories mentioned in Theorem~\ref{thm:informal}) is
then recovered as a special case with $p=0$, noting that
\[
\vcd_{\mathcal{X}} = \limsup_{n}  \frac{ \log(\nu_{\mathcal{X},\mathcal{Z}}^0(n))}{\log(n)}
\leq 
\vcd_{\mathcal{X}}^0,
\]
with
\begin{equation}
\label{eqn:Y}
 \mathcal{Z} = \{\{y\} \mid y \in Y\}.
\end{equation}

\begin{remark}
    The restriction of properness of $Y$ and that of the members of the family $\mathcal{X}$ in Theorem~\ref{thm:informal} is not very important in the derivation of \eqref{eqn:vcd-bound} for general definable families $\mathcal{X}$ -- since  inequality \eqref{eqn:vcd-bound} for general $\mathcal{X}$ follow quite easily from the same inequality in the proper case (using a reduction similar to the one used in the proof of Theorem 1 in \cite{BP1}).   
\end{remark}

Note that our notion of 
$\vcd^0_{\mathcal{X}}$
is in fact 
more general than the classical VC-density $\vcd_{\mathcal{X}}$, 
since 
since the maximum in \eqref{eqn:def:vcd:p} is taken over
arbitrary definable families of proper definable subsets of $Y$, 
rather than the one in \eqref{eqn:Y}.
In particular, even in the classical case $p=0$, our theorem generalizes existing results in the literature.

While our main theorem, Theorem~\ref{thm:informal}, only depends on 
a suitably chosen cohomology theory in the models of the theories that 
we consider, our proof of the main technical result (Theorem~\ref{thm:main} below) from which Theorem~\ref{thm:informal} 
follows directly, uses properties of the category of {\it constructible sheaves} which underlie these cohomology theories. As a result we find it advantageous to shift to a more geometric language and prove our technical results in three different categories (see Table~\ref{tab:versions} below).
The properties of constructible sheaves that we need hold in these categories. 
Moreover, the technical result (Theorem~\ref{thm:main}) that we state below is valid in all the three  categories listed above and allows us to derive Theorem~\ref{thm:informal} for theories of o-minimal expansions of $\mathbb{R}$ and $\mathrm{RCF}$ (from the first category), 
$\mathrm{ACF}(0)$ (from the second category), and $\mathrm{ACF}(p)$ from the third category.

Since in our proofs we will draw on techniques from algebraic geometry,
 we prefer to use the notion of `correspondence' in lieu of definable families.
The notion of correspondence gives a more geometric
and scheme-theoretic way to encode `definable families' in model theory.

\begin{definition}[Correspondence]
\label{def:correspondence}
Let $A$, $B$ be schemes. A {\it correspondence} between $A$ and $B$ is a scheme $D$ equipped with morphisms $\pi_A: D \rightarrow A$ and $\pi_B: D\rightarrow B$. A correspondence is {\it finite}  if the resulting morphism $D \rightarrow A \times B$ is a finite map. We shall use the notation $[D;A,B]$ to denote a correspondence. In particular, a closed immersion $D \hookrightarrow A \times B$ gives a finite correspondence.
\end{definition}

In this article, all correspondences will in fact be closed immersions (i.e. the map $D \rightarrow A \times B$ will be a closed immersion).

\begin{remark}
Definition~\ref{def:correspondence} stated above in the category of schemes over complex numbers, extends in an obvious way to the category of schemes over other algebraically closed fields. In the 
category of definable sets and maps in any o-minimal structure, by a correspondence we shall always mean a closed immersion. We 
do not remark further on this point in what follows.
\end{remark}

\subsection{Key technical result}
\label{subsec:main}
In this section we state our key technical result (Theorem~\ref{thm:main} below): 
namely a common upper bound on the function $\nu^{p}_{\mathcal{X},\mathcal{Z}}$ in each of the theories referred to in Theorem~\ref{thm:informal}. 
Theorem~\ref{thm:informal} follows immediately from this bound. 

In order to prove our upper bound we work in the following 
three different categories:
\begin{enumerate}[1.]
\item
the category whose objects and maps are definable in some fixed o-minimal expansion of $\mathbb{R}$;
\item 
the category of schemes of finite type over $\mathbb{C}$;
\item 
the category of schemes of finite type over an arbitrary algebraically closed field;
\end{enumerate}

The families $\mathcal{X}$ and $\mathcal{Z}$, and the
correspondences $[H;X,Y]$ and $[\Lambda;Y,Z]$ that defines them,
as well as the cohomology theory being used, have to be interpreted differently in the three  different categories
listed above. We make these explicit below.
For the benefit of the readers and easy reference 
we also summarized this information in Table~\ref{tab:versions}.


\begin{table}
    \centering
    \begin{tabular}{|p{2.5cm}|p{2.5cm}|p{2.5cm}|p{2.5cm}|}
    \hline
        Category & Correspondences \newline
        $[H;X,Y]$ and $[\Lambda;Y,Z]$ & Topology/Site& Cohomology theory \\
    \hline
  o-minimal expansion 
  of $\mathbb{R}$      & 
  $X,Y,Z$ definable sets; \newline  
  $Y$ compact; 
  \newline
  $H \subset X \times Y$ and $\Lambda \subset Y \times Z$ closed definable subsets & euclidean & 
  sheaf cohomology groups of the constant sheaf $\bbQ$ in the underlying euclidean topology
  \\
  \hline
 schemes of finite type
 over $\mathbb{C}$ & $X,Y,Z$ schemes over $\mathbb{C}$;
 \newline 
 $Y$ proper; 
 \newline
 $H \subset X \times Y$, $\Lambda \subset Y \times Z$ closed subschemes &  analytic topology & sheaf cohomology groups of the constant sheaf $\bbQ$  in the underlying complex analytic topology \\
 \hline
schemes of finite type over an algebraically closed field $k$ \newline
with $\mathrm{char}(k) = p$      
& $X,Y,Z$ schemes of finite type over $k$;
\newline $Y$ proper; 
\newline
$H \subset X \times Y$ and $\Lambda \subset Y \times Z$ closed subschemes & \'{e}tale site &
$\ell$-adic cohomology  
defined on the \'{e}tale
site with $\ell \neq p$. \\
\hline

\end{tabular}
\vspace{.2in}
    \caption{The three categories}
    \label{tab:versions}
\end{table}


\subsubsection{o-minimal case}
\label{subsubsec:om}
We start with the o-minimal case.
\paragraph{\emph{Category}.} We fix an o-minimal expansion of 
$\mathbb{R}$ and restrict to the category of
and consider definable sets and maps in this structure.

\paragraph{\emph{Correspondences $[H;X,Y]$ and $[\Lambda;Y,Z]$}.}
We let $X,Y,Z$ be compact definable sets,
and $H \subset X \times Y$ and $\Lambda \subset Y \times Z$ be closed definable subsets, and let $[H;X,Y]$ and $[\Lambda;Y,Z]$
be the induced correspondences.

\paragraph{\emph{Cohomology}.}
We use the euclidean topology on 
definable subsets and define the cohomology of a definable subset to be that of the constant sheaf $\mathbb{Q}$. 

\hide{
\begin{remark}
For more general o-minimal structures (namely,
o-minimal expansions of arbitrary real closed fields), one needs to be
more careful about defining a good cohomology theory 
(see for example \cites{Edmundo-Woerheide,Woerheide})
and the same remark as in Remark~\ref{rem:site} applies in this situation
as well.
\end{remark}
}

\subsubsection{Complex case}
\label{subsubsec:complex}
\paragraph{\emph{Category}.}
In this case we restrict to the category of schemes of finite type over the complex numbers. 

\paragraph{\emph{Correspondences $[H;X,Y]$ and $[\Lambda;Y,Z]$}.}
Let $X,Y,Z$ be schemes over the field of complex numbers. We assume that $Y$ is a proper scheme. Let $H \subset X \times Y$,  and $\Lambda \subset Y \times Z$ be closed subschemes. In particular, we are given finite correspondences
$[H;X,Y]$ and $[\Lambda;Y,Z]$.\\

\paragraph{\emph{Cohomology}.} 
The cohomology groups that we will use are the sheaf cohomology groups of the constant sheaf $\bbQ$ in the underlying complex analytic topology. 
Note that these are 
isomorphic to the ordinary singular cohomology with coefficients in $\bbQ$,
with isomorphisms that are functorial.

\begin{remark}
\label{rem:site}
For more general algebraically closed fields of characteristic zero,
one can take the cohomology of the constant sheaf on a certain Grothendieck site (defined using semi-algebraic triangulations) 
instead of the analytic topology, and these groups will be canonically isomorphic to the ones defined using analytic topology in the 
complex case.
\end{remark}

\subsubsection{\'{E}tale case}
\label{subsubsec:etale}

\paragraph{\emph{Category}.}
In this case we restrict to the category of schemes defined over an algebraically closed field $k$ (of arbitrary characteristic).

\paragraph{\emph{Correspondences $[H;X,Y]$ and $[\Lambda;Y,Z]$}.}
Let $X,Y,Z$ be schemes over an algebraically closed field $k$ of $char(k) = p \geq 0$, and fix a prime $\ell \neq p$. We assume that $Y$ is a proper scheme. Let $H \subset X \times Y$ and $\Lambda \subset Y \times Z$ be closed subschemes. 
In particular, we are given finite correspondences
$[H;X,Y]$ and $[\Lambda;Y,Z]$.\\

\paragraph{\emph{Cohomology}.} 
We consider in this case the $\ell$-adic cohomology defined on the \'{e}tale
site. In particular, given a scheme $X$ over $\Spec(k)$, we denote by $\rH^i(X,\bbF_{\ell})$ the \'{e}tale cohomology of $X$ with coefficients in the finite field $\bbF_{\ell}$, and by $\rH^i(X,\bbQ_{\ell})$ the $\ell$-adic \'{e}tale cohomology with coefficients in $\bbQ_{\ell}$.

\subsubsection{The families $\mathcal{X}$ and $\mathcal{Z}$.}
In each of the three categories of Sections~\ref{subsubsec:om}, \ref{subsubsec:complex} and \ref{subsubsec:etale},
let $\pi_{H,X}: H \rightarrow X, \pi_{H,Y}: H \rightarrow Y, \pi_{\Lambda,Y}: \Lambda \rightarrow Y,$ and $ \pi_{\Lambda,Z}:\Lambda \rightarrow Z$ denote the natural projection maps. For a closed point $z \in Z$, we denote by $\Lambda_z \subset Y$ the closed
subspace $\pi_{\Lambda,Y}(\pi_{\Lambda,Z}^{-1}(z))$. Similarly, for
a closed point $x \in X$, we denote by $\leftindex_{x}{H} \subset Y$ the closed subspace $\pi_{H,Y}(\pi_{H,X}^{-1}(x))$. Finally, we denote by $\leftindex_{x}{\Lambda}_{z} = \Lambda_z \cap \leftindex_x{H}$.

\subsubsection{Upper bound on $\nu^p_{\mathcal{X},\mathcal{Z}}$}
\label{subsubsec:main}
In each of the three categories (see Table~\ref{tab:versions}) we have the following theorem. Later we will refer to the three versions as o-minimal, complex and \'{e}tale versions of Theorem~\ref{thm:main}.

\begin{theorem}\label{thm:main}    
There exists a constant $C > 0$ 
(depending only on the correspondences $[H;X,Y]$ and $[\Lambda;Y,Z]$ and $p$),
such that for all $n \geq 1$,
\[
\nu^p_{\mathcal{X},\mathcal{Z}}(n) \leq C \cdot n^{(p+1)\dim X},
\]
where $\mathcal{X} = \{\leftindex_{x} H \mid x \in X\}$ and 
$\mathcal{Z} = \{\Lambda_z \mid z \in Z\}$.
\end{theorem}

\subsection{Method of Proof}
We briefly outline the proof method for  the
o-minimal, complex and \'{e}tale versions of Theorem~\ref{thm:main}.
Since the proofs are structurally quite similar in each of the three cases
we can describe them in a uniform way.
For the purposes of this subsection
we shall simply refer to our objects as spaces with the understanding that one is in one of the 
aforementioned settings. In particular, we work with data $[H;Y,Z], [\Lambda;Y,Z]$ in these settings. Furthermore, we simply work with singular cohomology with rational coefficients, again with the understanding that 
over arbitrary algebraically closed fields fields (i.e. not necessarily over the complex numbers) one should replace these with appropriate $\ell$-adic \'{e}tale cohomology groups.  

Our first observation is that that for every choice of 
$z_0,\ldots,z_n$, 
the kernels 
\begin{equation}\label{equation:mapincohomology}
    \ker\left(
\mathrm{H}^p\left(\bigcup_{j=0}^{n} \Lambda_{z_{j}},\bbQ\right)
    \longrightarrow
    \mathrm{H}^p\left(\bigcup_{j=0}^{n} \leftindex_{x}{\Lambda}_{z_{j}},\bbQ\right)
    \right)
\end{equation}
are the stalks of a constructible sheaf on $X$ which is a subsheaf 
of a constant sheaf. Indeed, each of the cohomology groups can be realized as the stalk at the point $x \in X$ of a constructible sheaf on $X$. The cohomology group on the left is in fact the stalk of a constant sheaf (which reflects the fact that this group does not depend on $x$). Using general results on constructible sheaves (see \ref{prop:kernelisindep}, \ref{subsubsec:o-minimalsheaves},\ref{subsec:constructibleetale}), we conclude that in order to prove the theorem it suffices to prove that there exists a 
definable partition of $X$ of size bounded by $C\cdot n^{(p+1)\dim X}$ such that this constructible sheaf (i.e. the kernel of the aforementioned constructible sheaves)
is constant when restricted to each part of this partition. 

We prove the existence of this partition 
by proving that there
exists a family $\mathcal{S}$ of (definable) subsets of $X$ (depending only on the given correspondences $[H;X,Y]$ and $[\Lambda;Y,Z]$) parametrized by the spaces $Z, Z^2,\ldots,Z^{p+1}$ and satisfying the following two properties:
\begin{enumerate}
\item For every choice of $z_0,\ldots,z_n \in Z$, one obtains 
$\sum_{j=0}^{p} C \cdot \binom{n}{j+1}$ closed (definable) subspaces of $X$.

\item The closed subspaces from the previous part are such that on the realizations of any $0/1$-pattern on this set of 
subspaces the aforementioned constructible sheaf (i.e. the sheaf whose stalks are the kernels appearing above) is constant. 
\end{enumerate}

The bound in the theorem then follows from a bound on the number of $0/1$ patterns of these $\sum_{j=0}^{p} C \cdot \binom{n}{j+1}$ closed subspaces of $X$ that are realizable. 
In the o-minimal setting, we use inequality \eqref{eqn:vcd-bound} (in its dual form).
In order to bound the latter quantity in the complex scheme setting, we generalize the corresponding result for affine hypersurfaces proved in \cite{RBG01} to the setting of arbitrary proper schemes and family of subschemes (see Theorem~\ref{thm:Hilbert}). 

In order to prove the existence of the family $\mathcal{S}$ of $X$, we first consider the simplicial space $\frX$ with $k$-simplices given by $ X \times Z^{k+1}$. We associate to our correspondences $H$ and $\Lambda$ the following data:
\begin{enumerate}
\item Simplicial spaces $\cX_{H,\Lambda}$ (respectively $\cX_{\Lambda}$) depending on $[H;X,Y]$ (respectively $X$ and $[\Lambda; Y, Z]$).
\item These simplicial spaces fit into a commutative diagram
$$
\xymatrix{
\cX_{H,\Lambda} \ar[r]^{\pi_{\bullet}} \ar[dr]^{f_{\bullet}} & \cX_{\Lambda} \ar[d]^{g_{\bullet}} \\
   & \cX
}
$$
\end{enumerate}

The proof now proceeds as follows:
\begin{enumerate}
\item The simplicial map $f_{\bullet}$ gives morphisms $f_k: \frX_{H,\Lambda,k} \rightarrow X \times Z^{k+1}$ at the level of simplices. We obtain the collection of subspaces $\mathcal{S}$ discussed above from fixed stratifications
$\cS_k$ such that $Rf_{k,*}\bbQ$ is locally constant on the strata of $\cS_k$ for all $0 \leq k\leq p$. Note that this data is independent of $n$. 
\item The aforementioned simplicial spaces are parametrized by the simplicial space $\frZ$ with $k$-simplices given by $Z^{k+1}$. In particular, we may restrict these spaces over a point $(z_0,\ldots,z_n) \in Z^{n+1}$, and obtain a diagram as above where the simplicial space $\cX$ is replaced by the scheme $X$. In particular, for each such $z = (z_0,\ldots,z_n)$ we obtain a morphism of simplicial spaces over $X$:
$$
\xymatrix{
\frX_{H,\Lambda,z} \ar[r] \ar[dr]^{f_z} & \frX_{\Lambda,z} \ar[d]^{g_z}\\
  & X .}
$$
An argument via proper base change and cohomological descent now implies that one has an induced morphism of constructible sheaves $R^{p}g_{z,*}\bbQ \rightarrow R^pf_{z,*}\bbQ$ on $X$ which at the level of stalks at a point $x \in X$ is precisely the morphism of cohomology groups \eqref{equation:mapincohomology} appearing above. Moreover, $R^{p}g_{z,*}$ is a constant local system.
\item The sheaves $R^pg_{z,*}\bbQ$ (respectively $R^pf_{z,*}\bbQ$) are abutments of a standard spectral sequence associated to the simplicial space $\frX_{\Lambda,z}$ (respectively $\frX_{H,\Lambda,z}$) whose $\rE_1^{j,i}$-terms are given by $\R^ig_{j,*}\bbQ$ (respectively $\R^if_{j,*}\bbQ$) where $g_j: \frX_{H,\Lambda,z,j} \rightarrow X$ (respectively $f_j: \frX_{\Lambda,z,j} \rightarrow X$)  is the induced map on $j$-simplices. Moreover, we have a morphism of spectral sequences from the one for $g$ to that for $f$. By construction, Step 1 above allows us to control the lengths of partitions on which the kernels of the resulting morphisms of spectral sequences on the $\rE_1^{j,i}$-terms are constant sheaves along the stratification $\cS$. A delicate analysis of the resulting kernels on the ensuing pages of the spectral sequence then allows us to conclude the analogous result for the abutments of the spectral sequence (see Theorem \ref{thm:spectralseqinput}), and hence concludes the proof of 
Theorem~\ref{thm:main} (in all three versions).
\end{enumerate}

\subsection{Contents}
We briefly describe the contents of the sections below. 

In Section~\ref{sec:applications}, we describe a few applications of our main results. We first show our notion of higher degree VC-density can be extended to higher orders (or arities), parallel to similar notions in the classical case. We then describe two combinatorial applications of the notion of higher degree VC-density. More precisely, we show that under certain situations bounds on the higher degree VC-density implies the  existence of higher degree $\eps$-nets and higher degree fractional Helly number. These are higher degree analogs of the corresponding results in the classical case.

In Section~\ref{sec:prelim}, we recall some basic background from the theory of constructible sheaves in the various setting discussed above: schemes over the complex numbers, schemes over arbitrary algebraically closed field and the \'{e}tale topology, and the o-minimal setting. We recall the aforementioned spectral sequences, and prove the key result on constructibility of kernels of morphism of such spectral sequences (\ref{thm:spectralseqinput}). We also recall some basic results on cohomological descent adapted to our setting of correspondences.

In Section~\ref{sec:stratification}, we generalize the results of \cite{RBG01} to the setting schemes over the complex numbers, and in fact over arbitrary algebraically closed fields. In particular, we obtain general VC-density bounds for schemes over the complex numbers. We apply these to obtain our desired bounds on lengths of certain stratifications (both over the complex numbers and in the \'{e}tale setting). We also discuss analogous results in the o-minimal setting. 

In Section~\ref{sec:proof:main:complex}, we apply the results of the previous sections in order to prove 
the three versions of Theorem~\ref{thm:main}.
In Section~\ref{sec:tightness} we provide some examples showing the tightness of these bounds.

In Section~\ref{sec:proof:main:general}, we prove 
Theorem~\ref{thm:main:general} 
generalizing 
Theorem~\ref{thm:main}
to the higher order VC-density setting. 

In Section~\ref{sec:proof:applications}, we prove Theorems \ref{thm:epsilon-nets-degree-p} and  \ref{thm:fractional-helly-degree-p}
on the existence of higher degree $\eps$-nets and higher degree fractional Helly theorem respectively.

\section{Applications}
\label{sec:applications}
In this section we discuss applications of the main theorems proved in the paper. Our first application (see Section \ref{subsec:higher-order} below)
is an extension of the notion of higher
degree VC-density introduced in the last section to higher orders
\footnote{
What we call `order' is also referred to as `arity' in the model theory literature.
}
which parallels the generalization of the $\mathrm{NIP}$ property to 
$\mathrm{NIP}_q, q \geq 1$ due to Shelah \cite{Shelah2014}. We prove a generalization of Theorem~\ref{thm:informal} 
(Theorem~\ref{thm:informal:general}) bounding the higher order VC-densities in all the theories considered in this paper.

In Section~\ref{subsec:combinatorial}, we describe some applications to show that knowing a bound on the higher degree VC-densities for certain classes of families of subspaces lead to interesting conclusions -- namely, existence of higher degree $\eps$-nets (that we define) and 
also higher degree fractional Helly numbers (Theorems~\ref{thm:epsilon-nets-degree-p} and \ref{thm:fractional-helly-degree-p}).

\subsection{Higher Order Independence Property}
\label{subsec:higher-order}
While we believe that we are the first to introduce the notion of higher degree VC-density, there is another generalization of the notion
of VC-dimension theory (to higher orders) originating in the work of Shelah \cite{Shelah2014},  who generalized the notion of the independence-property of formulas to higher orders.

A formula
$\phi(x;y^{(0)},\ldots,y^{(q-1)})$ is said to be $q$-independent for some theory $\bT$, if there exists a model $\bbM$ of $\bT$, for every 
$n > 0$, there exists  $Z^{(i)} \subset_n \in \bbM^{|y^{(i)}|}, 0 \leq i \leq q-1$,
such that for every subset $S \subset Z^{(0)} \times \cdots \times Z^{(q-1)}$, there exists $x_S \in \bbM^{|x|}$, such that for every 
$(y_0,\ldots,y_{q-1}) \in  Z^{(0)} \times \cdots \times Z^{(q-1)}$ 
$\bbM \models \phi(x_S,y_0,\ldots,y_{q-1})$ if and only 
$(y_0,\ldots,y_{q-1}) \in S$.
The formula $\phi$ is said to be $q$-dependent if it is not $q$-independent and a theory $\bT$ has the property  $\mathrm{NIP}_q$ if every formula is $q$-dependent. It is obvious that the property $\mathrm{NIP}_q$ implies $\mathrm{NIP}_{q+1}$, and the 
property $\mathrm{NIP}_1$ is the same as $\mathrm{NIP}$ defined earlier.
The higher order $\mathrm{NIP}$-property (i.e. $\mathrm{NIP}_q$ for $q>1$) has been studied extensively in recent times 
\cites{Shelah2014,Chernikov-et-al-2019,Beyarslan,Hempel}).
The $\mathrm{NIP}_q$ property motivates the following generalization of the notion of higher degree VC-density to higher order dependence.

Suppose that
$Y^{(0)},\ldots,Y^{(q-1)}$ are sets
and 
$
\mathcal{X}
$
a 
family  of 
subsets of $Y^{(0)} \times \cdots \times Y^{(q-1)}$.

For any tuple of subsets $\bar{Y}' = (Y^{(0)'},\ldots,Y^{(q-1)'})$ 
where $Y^{(i)'} \subset Y^{(i)}$,
we set
\[
\bar{S}(\bar{Y}';\mathcal{X}) := \{\bar{Y}' \cap \bX \mid \bX \in \mathcal{X}\}.
\]

We will also use the convenient notation  
\[
\bar{Y}' = (Y^{(0)'},\ldots,Y^{(q-1)'}) \subset_n \bar{Y}
\] 
to mean that for each $i$
\[
Y^{(i)'} \subset_n Y^{(i)} .
\]

Denote
\begin{equation}
\label{eqn:nu:genera;}
\nu_{\mathcal{X},q}(n) =  \max_{\bar{Y}' \subset_n \bar{Y}} \card(\bar{S}(\bar{Y}';\mathcal{X})),
\end{equation}
and finally define

\begin{equation}
    \label{eqn:def:vd-density:general}
    \vcd_{\mathcal{X},q} = \limsup_n \frac{\log(\nu_{\mathcal{X},q}(n))}{ \log(n)}.
\end{equation}

We refer to $\vcd_{\mathcal{X},q}$ as the  \emph{order $q$ VC-density of $\mathcal{X}$}. 
Our higher degree notion of VC-density generalizes to the setting of higher order 
VC-density as well.

Fix $q \geq 1$, and
suppose that $Y^{(0)},\ldots,Y^{(q-1)}$ are topological spaces, and
$\mathcal{X}$  a set of \emph{closed} subspaces of $Y^{(0)} \times \cdots \times Y^{(q-1)}$, and for each $i, 0 \leq i \leq q-1$,
$\mathcal{Z}^{(i)}$ a set of closed subspaces of $Y^{(i)}$. 
We denote 
$\bar{\mathcal{Z}} = (\mathcal{Z}^{(0)},\ldots,\mathcal{Z}^{(q-1)})$.
Let for each $i,0 \leq i \leq q-1$
$\mathcal{Z}_0^{(i)} \subset_n \mathcal{Z}^{(i)}$,
and
let 
$\bigcup \bar{\mathcal{Z}}_0$ denote  
$\prod_{0 \leq i \leq q-1}
\bigcup_{\bZ^{(i)} \in \mathcal{Z}_0^{(i)}} \bZ^{(i)}$.

For each $p \geq 0$, define
\[
\bbS^p(\bar{\mathcal{Z}}_0;\mathcal{X}) = \{\ker(\HH^p(\bigcup \bar{\mathcal{Z}}_0,\bbQ) \rightarrow \HH^p(\bigcup \bar{\mathcal{Z}}_0 \cap \bX, \bbQ)) \mid \bX \in \mathcal{X}\}.
\]

We define:
\begin{equation}
\label{eqn:def:vcd:pq}
\vcd^{p,q}_{\mathcal{X}} := \max_{\bar{\mathcal{Z}}} \limsup_{n}  \frac{ \log(\nu_{\mathcal{X},\bar{\mathcal{Z}}}^{p,q}(n))}{\log(n)},
\end{equation}
where
\begin{equation}
\label{eqn:chi-C:pq}
\nu^{p,q}_{\mathcal{X},\bar{\mathcal{Z}}} (n) =  
\sup_{\mathcal{Z}_0^{(i)} \subset_n \mathcal{Z}^{(i)}, 0 \leq i \leq q-1} \card(\bbS^p(\bar{\mathcal{Z}}_0;\mathcal{X})),
\end{equation}
and the maximum in \eqref{eqn:def:vcd:pq} is taken over all 
tuples $(\mathcal{Z}^{(0)},\ldots,\mathcal{Z}^{(q-1)})$ with
$\mathcal{Z}^{(i)}$ a set of closed subspaces of $Y^{(i)}$ for $0 \leq i \leq q-1$.

We have following generalization of Theorem~\ref{thm:informal}. 

Let $\mathbf{T}$ be one of the following theories: 
the theory of an o-minimal expansion of 
$\mathbb{R}$, $\mathrm{RCF}$,
$\mathrm{ACF}(0)$ or $\mathrm{ACF}(p)$. 

Suppose that
$X,Y^{(0)},\ldots,Y^{(q-1)}$,
are 
definable subsets in some model $\bbM$ of $\bT$, 
and 
sets and
$H \subset X \times Y^{(0)} \times \cdots \times  Y^{(q-1)}$
a closed definable subset. 
Then, 
\[
\mathcal{X} := \{ \leftindex_{x}H \mid x \in X\}
\]
is a 
definable family  of 
closed subsets of $Y^{(0)} \times \cdots \times Y^{(q-1)}$,
where 
\[
\leftindex_{x}H = \pi_{Y^{(0)} \times \cdots \times Y^{(q-1)}} (\pi_X^{-1}(x) \cap H), 
\]
and 
$\pi_X: X \times Y^{(0)} \times \cdots Y^{(q-1)} \rightarrow X$, $\pi_{Y^{(0)} \times \cdots \times Y^{(q-1)}}: X \times Y^{(0)} \times \cdots \times Y^{(q-1)} \rightarrow Y^{(0)} \times \cdots \times Y^{(q-1)}$
are the projection maps.

\begin{theorem}
\label{thm:informal:general}
Suppose that $Y^{(0)},\ldots,Y^{(q-1)}$ are proper. 
For  each $p \geq 0$ and  $q \geq 1$, 
\[
\vcd^{p,q}_{\mathcal{X}} \leq (p+q) \cdot \dim X.
\]
\end{theorem}

Notice that \[
\vcd^{p,1}_{\mathcal{X}} = \vcd^{p}_{\mathcal{X}}.
\]

Also note that by taking $Y = Y^{(0)} \times \cdots \times Y^{(q-1)}$, 
considering $\bar{\mathcal{Z}}$ as a subset of $2^{Y}$
it follows directly from \eqref{eqn:chi-C:p} and \eqref{eqn:chi-C:pq} that
\[
\nu^{p,q}_{\mathcal{X},\bar{\mathcal{Z}}} (n)
\leq 
\nu^{p}_{\mathcal{X},\bar{\mathcal{Z}}} (n^q),
\]
(since the maximum is being taken over a smaller set of choices in 
\eqref{eqn:chi-C:pq}).

It now follows directly from Theorem~\ref{thm:informal}, that
\[
\vcd^{p,q}_{
\mathcal{X}} \leq 
q(p+1) \cdot \dim X.
\]

Thus, if $p=0$ or $q=1$, Theorem~\ref{thm:informal:general} follows immediately from Theorem~\ref{thm:informal}. In every other case,
the bound in Theorem~\ref{thm:informal:general} is stronger than the one 
obtained by applying Theorem~\ref{thm:informal}.

\begin{remark}
Our generalized notion of VC-density is graded by bi-degree $(p,q)$ where $p \geq 0$ and $q \geq 1$ are integers. We refer to the index $p$ as the \emph{degree} and the index $q$ as the \emph{order} of the VC-density. 
For any definable family $\mathcal{X}$ to which Theorem~\ref{thm:informal} is applicable 
\[
\vcd_{\mathcal{X}} \leq \vcd_{\mathcal{X}}^{0,1}. 
\]
So an upper bound on the degree $0$ and order $1$ VC-density of a family $\mathcal{X}$ is also an upper bound on the classical VC-density of $\mathcal{X}$.

More generally, for every $q \geq 1$,
\[
\vcd_{\mathcal{X},q} \leq \vcd_{\mathcal{X}}^{0,q}. 
\]
Thus, an upper bound 
on the degree $0$ and order $q$  VC-density of $\mathcal{X}$,
is also an upper bound on the order $q$ VC-density of $\mathcal{X}$. 
\end{remark}

\begin{remark}
Note also that the generalized VC-density
$\vcd^{p,q}_{\mathcal{X}}$
measures the 
`complexity' of the definable family $\mathcal{X}$  against
collections of $n^q$ subsets of $Y^{(0)} \times \cdots \times Y^{(q-1)}$ of the special (product) form $\mathcal{Z}^{(0)}_0 \times \cdots \times \mathcal{Z}^{(q-1)}_0$, where each $\mathcal{Z}^{(i)}_0 \subset_n \mathcal{Z}^{(i)}$
(rather than against arbitrary subsets of size $n^q$ of
$\mathcal{Z}^{(0)} \times \cdots \times \mathcal{Z}^{(q-1)}$).
Since $p+q = q(p+1)$ whenever $p=0$ or $q =1$, the difference 
between these two classes of `test' families of finite subsets,
(i.e. finite sets of cardinality $n^q$ of the special form 
$\mathcal{Z}^{(0)}_0 \times \cdots \times \mathcal{Z}^{(q-1)}_0$, as opposed to general subsets of 
$\mathcal{Z}^{(0)} \times \cdots \times \mathcal{Z}^{(q-1)}$
having cardinality $n^q$)
is reflected in our bound (Theorem~\ref{thm:informal:general})
only for $p > 0$ and $q > 1$. 
It follows that for $p >0 $ and $q > 1$, the higher order and degree VC densities 
$\vcd_{\mathcal{X}}^{p,q}$
are sensitive to the product structure of finite sets
in a way that the classical VC-density,
or even the higher order versions of Sauer-Shelah, namely $\vcd_{\mathcal{X},q}, q>1$,
are not.
\end{remark}

\subsubsection{Upper bound on $\nu^{p,q}_{\mathcal{X},\bar{\mathcal{Z}}}$}
Theorem~\ref{thm:informal:general} is a consequence on a quantitative upper bound on the function
$\nu^{p,q}_{\mathcal{X},\bar{\mathcal{Z}}}$
(for appropriate family $\mathcal{X}$ and tuples of families
$\bar{\mathcal{Z}}$, in the same way as
Theorem~\ref{thm:informal} is a consequence of the quantitative
bound in
Theorem~\ref{thm:main}. 
which give upper bound on the function
$\nu^{p}_{\mathcal{X},\mathcal{Z}}$ for different theories.

Let $q \geq 1$ be fixed, and
$[H;X,Y_0 \times \cdots \times Y_{q-1} ]$ and $[\Lambda_i;Y_i,Z_i],0 \leq i \leq q-1$,
correspondences satisfying the same properties as in Theorem~\ref{thm:main}
(in one of the three categories in Table~\ref{tab:versions}).

In each of the three categories we have the following theorem.

\begin{theorem}
\label{thm:main:general}
Let $p \geq 0$.
There exists a constant $C = C_{H,\Lambda_0,\ldots,\Lambda_{q-1},p} > 0$ (depending only on the correspondences $[H;X,Y]$ and $[\Lambda_i;Y_i,Z_i], 0 \leq i \leq q-1$,  and $p$), such that for all $n \geq 1$,
\[
\nu^{p,q}_{\mathcal{X},\bar{\mathcal{Z}}} (n) \leq C \cdot n^{(p+q)\dim X},
\]
where $\mathcal{X} = \{\leftindex_{x} H \mid x \in X\}$ and 
$\bar{\mathcal{Z}} = (\mathcal{Z}_0,\ldots, \mathcal{Z}_{q-1})$ with
$\mathcal{Z}_i = \{\Lambda_{i,{z_i}} \mid z_i \in Z_i \}$.
\end{theorem}

\begin{remark}
    Note that setting $q=1$ recovers Theorem~\ref{thm:main}. 
\end{remark}

\subsection{Combinatorial applications}
\label{subsec:combinatorial}
The tight upper bound that we prove on the higher degree VC-density 
should lead to higher degree versions of combinatorial results 
in which the classical degree $0$ VC-density plays a role.
Upper bounds on the (degree $0$) VC-density of certain families of subsets of 
a fixed ambient space play an important role in many applications in 
discrete geometry (see for example the book \cite{Matousek-book}).
We consider in this paper two such applications and extend these 
to the higher degree situation.

We first begin with some notation that should be seen as a higher degree
analog of `$\in$'.
\begin{notation}
\label{not:pitchfork}
 Let $Y$ be a topological space and $\mathcal{X},\mathcal{Z}$ sets of subspaces of $Y$, and $p \geq 0$. For $\bX \in \mathcal{X}, \bZ \in \mathcal{Z}$, we denote 
 \[
 \bZ \in_p \bX
 \]
 if
 the restriction homomorphism $\HH^p(\bZ,\bbQ) \rightarrow \HH^p(\bZ \cap \bX,\bbQ)$ is non-zero.
\end{notation}

\begin{remark}[Geometric interpretation of $\in_p$]
    In some cases, Notation~\ref{not:pitchfork} can be seen
    as belonging. 
    Indeed, if $\bZ$ is a point, then 
    \[
     \bZ \in_0 \bX \Longleftrightarrow \bZ \in \bX.
    \]
    More generally,  if $\bZ$ is an irreducible closed subscheme of an affine scheme $S$ with $\HH^{\dim Z}(\bZ,\bQ) \neq 0$, and $\bX$ an closed subscheme of $S$ with $\dim \bZ = \dim \bX$, 
    then
    \[
    \bZ \in_{\dim \bZ} \bX \Longleftrightarrow \bZ \subset \bX.
    \]
\end{remark}

\subsubsection{Higher degree VC-density bounds, $\eps$-nets 
and the fractional Helly number
}
One key property of set systems that is ensured by the finiteness
of the VC-density is the existence of $\eps$-nets of constant size.
This fact is of great importance in discrete and computational geometry
(see for example \cite{Matousek-book}).

We recall here the definition of $\eps$-nets for set systems
(see for example \cite{Matousek-book}*{Definition 10.2.1})

\begin{definition} 
\label{def:eps-net}
Let $Y$ be a finite set and $\mathcal{X} \subset 2^Y$. 
For $0 < \eps <1$, a subset $S \subset Y$ is called an $\eps$-net
for $(Y,\mathcal{X})$, if it satisfies the property that for all
$\bX \in \mathcal{X}$ with $\card(\bX) \geq \eps \cdot  \card(Y)$, 
$S \cap \bX \neq \emptyset$.
\end{definition}

A key result in combinatorics relates VC-density to existence of 
$\eps$-nets of constant size.

\begin{theorem}\cites{Komlos-Pach,Haussler-Welzl}
\label{thm:eps-net}
 Let $(Y,\mathcal{X})$ be a pair as above with $\mathcal{X} \subset 2^Y$ and $\vcd_{\mathcal{X}} = d$. Then there exists a constant $C > 0$ such that 
 for all finite subsets $Z_0 \subset Y$ and 
 for every $ 0 < \eps < 1$, there exists an $\eps$-net $N \subset Z_0$, 
 of the pair $(Z_0,\mathcal{X}_0)$, where $\mathcal{X}_0 = \{Z_0 \cap \bX \mid \bX \in \mathcal{X}\}$,
 with $\card(N) \leq C \cdot d \cdot (1/\eps) \cdot\log(1/\eps)$. 
\end{theorem}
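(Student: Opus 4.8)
The plan is to deduce the $\eps$-net theorem from the classical one by a standard sampling argument, since the statement we are asked to prove is essentially the classical Komlós–Pach–Haussler–Welzl bound (the ``higher-degree'' refinement is not yet in play in this particular theorem). First I would recall that the classical result is usually stated for families of finite VC-\emph{dimension}, so the first reduction is to pass from a bound on $\vcd_{\mathcal{X}} = d$ to a bound on the VC-dimension of the restricted family $\mathcal{X}_0 = \{Z_0 \cap \bX \mid \bX \in \mathcal{X}\}$. Here one uses the Sauer–Shelah lemma in the form quoted in the excerpt: $\vcd_{\mathcal{X}} = d$ forces $\nu_{\mathcal{X}}(n) = O(n^{d'})$ for every $d' > d$, hence the shatter function of $\mathcal{X}_0$ is polynomially bounded, hence $\mathcal{X}_0$ has finite VC-dimension bounded in terms of $d$ (indeed any $d' > d$ eventually dominates, so the VC-dimension is at most some explicit function of $d$). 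This is the only place where one must be a little careful, because $\vcd$ is a $\limsup$ and one wants a clean statement for \emph{all} finite $Z_0$; the point is that a single constant $C$ and the linear dependence on $d$ can be extracted uniformly.

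Next I would invoke the classical $\eps$-net theorem for set systems of bounded VC-dimension: if a finite set system on a ground set of size $N$ has VC-dimension $\delta$, then a uniform random sample of size $O(\delta \eps^{-1} \log \eps^{-1})$ is an $\eps$-net with positive probability (Haussler–Welzl). Applying this with ground set $Z_0$, system $\mathcal{X}_0$, and $\delta$ the VC-dimension bound obtained in the previous step yields an $\eps$-net $N \subset Z_0$ with $\card(N) \leq C' \cdot \delta \cdot (1/\eps)\log(1/\eps)$. Folding the (linear) dependence of $\delta$ on $d$ into the constant gives the claimed bound $\card(N) \leq C \cdot d \cdot (1/\eps)\log(1/\eps)$.

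The main obstacle, such as it is, is bookkeeping rather than mathematics: one must verify that the constant $C$ and the \emph{linear} dependence on $d$ survive the two reductions. The Sauer–Shelah step a priori only gives VC-dimension $O(d \log d)$ or similar if done crudely, which would produce an extra logarithmic factor; to get the clean linear-in-$d$ bound one should instead either cite the sharper packing/shallow-cutting form of the $\eps$-net theorem (which takes as input the VC-\emph{density} directly, not the dimension, as in \cite{Komlos-Pach}), or absorb the discrepancy by noting that the $\log(1/\eps)$ factor already present allows a slightly weaker dimension bound. I would phrase the proof so as to cite \cite{Komlos-Pach} for the density-sensitive version, making the reduction essentially immediate: their theorem is stated for families whose shatter function grows like $n^d$, which is exactly what $\vcd_{\mathcal{X}} = d$ (together with Sauer–Shelah) provides for the restricted system $\mathcal{X}_0$ on every finite $Z_0$, with constants independent of $Z_0$. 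Hence the theorem follows directly, and no genuinely new argument beyond the classical one is required here.
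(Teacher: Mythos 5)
This statement is not proved in the paper: it is quoted as a known result and attributed to \cite{Komlos-Pach} and \cite{Haussler-Welzl}, so there is no ``paper's own proof'' to compare your argument against. Given that, the appropriate evaluation is whether your blind derivation is a correct reconstruction of the classical argument, and it essentially is. You correctly identify the two necessary ingredients: (i) passing from $\vcd_{\mathcal{X}}=d$ (a $\limsup$ statement about the shatter function) to a clean polynomial bound $\nu_{\mathcal{X}}(n)\leq C' n^{d'}$ for some $d'>d$ uniform in $n$, which restricts to every $(Z_0,\mathcal{X}_0)$ with constants independent of $Z_0$; and (ii) invoking the \emph{density-sensitive} form of the $\eps$-net theorem --- the one stated in terms of polynomial growth of the shatter function rather than VC-dimension --- to avoid the spurious $\log d$ loss that would come from bounding VC-dimension via Sauer--Shelah. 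This is exactly the version the paper itself relies on elsewhere (it cites Matou\v{s}ek, Theorem~10.2.4, in the proof of Theorem~\ref{thm:epsilon-nets-degree-p}), so your route is the intended one. The only thing worth flagging as you tidy this up is that $\vcd_{\mathcal{X}}=d$ yields the exponent $d'$ strictly larger than $d$, so the bound you actually obtain is $C'\cdot d'\cdot(1/\eps)\log(1/\eps)$; absorbing $d'\leq d+1$ into the constant to recover the stated $C\cdot d\cdot(1/\eps)\log(1/\eps)$ is fine for $d\geq 1$, but you should say a word about how the (pathological) case $d=0$ is handled, since the literal bound $C\cdot 0=0$ is vacuous --- in that regime $\nu_{\mathcal{X}}$ is bounded and constant-size nets exist, which one should note explicitly or exclude by convention.
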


\begin{remark}
    The key point in Theorem~\ref{thm:eps-net} which makes it extremely important in applications, is that the cardinality
    of the $\eps$-net is independent of $\card(Z_0)$.
\end{remark}

We now extend the notion of $\eps$-nets to higher degrees.
We call a topological space $Y$ and two sets of subspaces
$\mathcal{X}$ and $\mathcal{Z}$ a triple, denoted by $(Y,\mathcal{X},\mathcal{Z})$.

\begin{definition}[$\eps$-nets in degree $p$]
\label{def:epsilon-nets-degree-p}
    Let $(Y,\mathcal{X},\mathcal{Z})$ be a triple, and
    $p \geq 0, 0 < \eps < 1$. Given  a finite subset $\mathcal{Z}_0 \subset \mathcal{Z}$, we say $\mathcal{S} \subset \mathcal{Z}$, is a \emph{degree $p$ $\eps$-net for $\mathcal{Z}_0$},
    if for all $\bX \in \mathcal{X}$, such that 
    \[
    \card(\{\bZ \in \mathcal{Z}_0 \mid \bZ \in_p \bX\}) \geq \eps \cdot \card(\mathcal{Z}_0),
    \]
    there exists $\bZ \in \mathcal{S}$ such that $\bZ \in_p \bX$.
\end{definition}

\begin{remark}[$\eps$-nets for finite sets]
\label{rem:epsilon-nets-finite}
    The notion of $\eps$-nets for finite subsets of $Y$ with respect to the family $\mathcal{X}$ can be recovered by taking $p=0$, and 
    $\mathcal{Z} = \{\{y\} \mid y \in Y\}$. 
\end{remark}

We will make use of the following general  position hypothesis on families of
real algebraic sets in the theorems in this section.

\begin{definition}[$p$-general position]
\label{def:p-general-position}
    We say that a set $\mathcal{Z}_0$ of irreducible real algebraic sets is in $p$-general position if for every $k, 0 \leq k \leq p$, and 
    $\mathcal{Z}' \subset_k \mathcal{Z}_0$,
    $\dim \bigcap_{\bZ \in \mathcal{Z}'} \bZ < p-k$.
\end{definition}

\begin{remark}
    For example, any set of generically chosen $p$-dimensional real varieties of $\bbR^N$ will satisfy the above property as long as $N >p+1$.
\end{remark}

We can now state a higher degree analog of Theorem~\ref{thm:eps-net}.

\begin{theorem}[Existence of  $\eps$-nets in degree $p$]
\label{thm:epsilon-nets-degree-p}
Let 
$(Y,\mathcal{X},\mathcal{Z})$ be a triple such that
\begin{enumerate}[(a)]
\item
$Y = \bbR^d$;
\item  
each $\bZ \in \mathcal{Z}$ is an irreducible real 
algebraic subset of $\bbR^d$ having real dimension at most $p$;
\item 
each $\bX \in \mathcal{X}$ is a closed semi-algebraic subset of $Y$;
\item 
$\vcd^p_{\mathcal{X}} < d$ ($d \in \bbN$).
\end{enumerate}

Then, there exists a constant $C>0$ such that for each $0 < \eps <1$, and each finite subset $\mathcal{Z}_0 \subset \mathcal{Z}$ in $p$-general position,
    there exists 
    a subset $\mathcal{S} \subset \mathcal{Z}$, with 
    \[
    \card(\mathcal{S})  \leq C \cdot d \cdot (1/\eps)\cdot \log ({1/\eps}),
    \]
    such that $\mathcal{S}$ is a degree $p$ $\eps$-net
    for $\mathcal{Z}_0$.
\end{theorem}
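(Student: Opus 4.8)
The plan is to reduce the higher-degree $\eps$-net statement to the classical $\eps$-net theorem (Theorem~\ref{thm:eps-net}) applied to a suitable auxiliary set system on $\mathcal{Z}_0$. Define the ground set to be $\mathcal{Z}_0$ itself, and for each $\bX \in \mathcal{X}$ let $\bX^\sharp := \{\bZ \in \mathcal{Z}_0 \mid \bZ \in_p \bX\}$. Set $\mathcal{X}^\sharp = \{\bX^\sharp \mid \bX \in \mathcal{X}\} \subset 2^{\mathcal{Z}_0}$. A degree-$p$ $\eps$-net for $\mathcal{Z}_0$ in the sense of Definition~\ref{def:epsilon-nets-degree-p} is then precisely (the realization inside $\mathcal{Z}$ of) a classical $\eps$-net for the pair $(\mathcal{Z}_0, \mathcal{X}^\sharp)$: indeed, the hitting condition ``$\exists \bZ \in \mathcal{S}$ with $\bZ \in_p \bX$'' for every $\bX$ with $\card(\bX^\sharp) \geq \eps \card(\mathcal{Z}_0)$ is exactly the defining property of an $\eps$-net for $(\mathcal{Z}_0,\mathcal{X}^\sharp)$, after identifying $\mathcal{S} \cap \mathcal{Z}_0$ with a subset of the ground set. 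So by Theorem~\ref{thm:eps-net} it suffices to bound $\vcd_{\mathcal{X}^\sharp}$ by a constant depending only on $(Y,\mathcal{X},\mathcal{Z})$; the classical theorem then produces the net of size $C \cdot d \cdot (1/\eps)\log(1/\eps)$.

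The crux is therefore to prove $\vcd_{\mathcal{X}^\sharp} \leq \vcd^p_{Y,\mathcal{X},\mathcal{Z}} < d$. By the duality between shatter functions and $0/1$-patterns recalled in the introduction, $\nu_{\mathcal{X}^\sharp}(n) = \widehat{\nu}(n)$ is the maximal number of realizable $0/1$-patterns of $n$ of the sets $\bX^\sharp$, i.e. the number of distinct vectors $(\mathbf{1}[\bZ_i \in_p \bX])_{i=1}^n$ as $\bX$ ranges over $\mathcal{X}$, maximized over $\mathcal{Z}' = \{\bZ_1,\ldots,\bZ_n\} \subset_n \mathcal{Z}_0 \subset \mathcal{Z}$. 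The key geometric point is that this pattern is recoverable from the cohomological data counted by $\nu^p_{\mathcal{X},\mathcal{Z}}(n)$, \emph{using the $p$-general position hypothesis}. Concretely, for $\mathcal{Z}'$ in $p$-general position, the intersections $\bZ_i \cap \bZ_j$ (and all higher intersections) have dimension $< p - 1 < p$, hence vanishing $\HH^p$; by the Mayer--Vietoris spectral sequence for the cover $\{\bZ_i\}$ of $\bigcup \mathcal{Z}'$, the group $\HH^p(\bigcup \mathcal{Z}',\bbQ)$ splits as $\bigoplus_i \HH^p(\bZ_i,\bbQ)$ modulo lower-order contributions, and likewise $\HH^p(\bigcup \mathcal{Z}' \cap \bX,\bbQ)$ is assembled from the $\HH^p(\bZ_i \cap \bX, \bbQ)$. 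One checks that the kernel $\ker(\HH^p(\bigcup\mathcal{Z}',\bbQ) \to \HH^p(\bigcup\mathcal{Z}'\cap\bX,\bbQ))$, restricted to the $i$-th summand, detects exactly whether the restriction $\HH^p(\bZ_i,\bbQ) \to \HH^p(\bZ_i\cap\bX,\bbQ)$ is zero, i.e. whether $\bZ_i \in_p \bX$ fails. Hence the $0/1$-pattern $(\mathbf{1}[\bZ_i \in_p \bX])_i$ is determined by the subspace $\ker(\HH^p(\bigcup\mathcal{Z}',\bbQ) \to \HH^p(\bigcup\mathcal{Z}'\cap\bX,\bbQ))$, giving $\nu_{\mathcal{X}^\sharp}(n) \leq \nu^p_{\mathcal{X},\mathcal{Z}}(n)$ and therefore $\vcd_{\mathcal{X}^\sharp} \leq \vcd^p_{Y,\mathcal{X},\mathcal{Z}} < d$.

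With that inequality in hand, Theorem~\ref{thm:eps-net} applied to $(\mathcal{Z}_0, \mathcal{X}^\sharp)$ with VC-density $< d$ yields a classical $\eps$-net $N \subset \mathcal{Z}_0$ of cardinality at most $C \cdot d \cdot (1/\eps)\log(1/\eps)$; taking $\mathcal{S} = N \subset \mathcal{Z}_0 \subset \mathcal{Z}$ finishes the proof, since the translation in the first paragraph shows $\mathcal{S}$ is a degree-$p$ $\eps$-net for $\mathcal{Z}_0$.

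\textbf{Main obstacle.} I expect the delicate step to be the cohomological bookkeeping in the second paragraph: showing that under $p$-general position the pattern $(\mathbf{1}[\bZ_i \in_p \bX])_i$ is genuinely read off from the single kernel subspace, uniformly in $\bX$. The Mayer--Vietoris / nerve spectral sequence does degenerate enough in top degree $p$ once all multi-intersections have dimension $< p$, but one must be careful that the \emph{same} vanishing holds for the intersections $\bZ_{i_1} \cap \cdots \cap \bZ_{i_k} \cap \bX$ with $\bX$ an arbitrary closed semi-algebraic set (here only an upper bound on dimension is needed, which follows since intersecting with $\bX$ cannot increase dimension), and that the splitting of $\HH^p$ is compatible with restriction to $\bX$. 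Handling the edge effects — the potential non-vanishing of $\HH^{p-1}$ of pairwise intersections feeding into $\HH^p(\bigcup\mathcal{Z}')$ via the $d_1$ differential — requires the strict inequality $\dim < p-k$ in Definition~\ref{def:p-general-position}, which is exactly why that hypothesis is stated with the sharp bound; making this precise is the technical heart of the argument.
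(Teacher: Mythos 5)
Your proof is correct and takes essentially the same route as the paper. The key reduction — translate $\eps$-nets in degree $p$ to classical $\eps$-nets for the derived set system $\mathcal{X}^\sharp$ on the ground set $\mathcal{Z}_0$, then bound its shatter function via the observation that, under $p$-general position, the kernel subspace of $\HH^p(\bigcup \mathcal{Z}',\bbQ) \to \HH^p(\bigcup \mathcal{Z}' \cap \bX,\bbQ)$ determines the pattern $(\mathbf{1}[\bZ_i \in_p \bX])_i$ — is exactly what the paper does. The ``technical heart'' you identify is isolated in the paper as Lemma~\ref{lem:unique}, whose proof runs through the Mayer--Vietoris spectral sequence precisely as you sketch: $p$-general position forces $\rE_1^{j,p-j}=0$ for $j\geq 1$ and kills all differentials out of $\rE_1^{0,p}$, so $\HH^p(\bigcup\mathcal{Z}') \cong \bigoplus_i \HH^p(\bZ_i,\bbQ)$ canonically and the kernel splits into the per-summand kernels, from which the set $\{\bZ_i : \bZ_i \in_p \bX\}$ is read off.

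One small wrinkle worth flagging: you cite Theorem~\ref{thm:eps-net} with ``$\vcd_{\mathcal{X}^\sharp} < d$,'' but $\mathcal{X}^\sharp$ lives on the \emph{finite} ground set $\mathcal{Z}_0$, so its VC-density, defined as a $\limsup$ in $n$, is trivially $0$ and cannot carry the needed information. What you actually have — and what you correctly derive — is a \emph{uniform shatter-function bound} $\nu_{\mathcal{X}^\sharp}(n) \leq \nu^p_{\mathcal{X},\mathcal{Z}}(n) \leq C\,n^{d'}$ for some $d' < d$ with $C$ independent of $\mathcal{Z}_0$. This is exactly what the quantitative $\eps$-net theorem requires, and the paper accordingly invokes the shatter-function formulation (Matou\v{s}ek, \emph{Lectures on Discrete Geometry}, Theorem 10.2.4) rather than the VC-density formulation of Theorem~\ref{thm:eps-net}. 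Replacing that one citation makes your argument airtight. You do not need the surjectivity of $\HH^p(\bZ,\bbQ)\to\HH^p(\bX\cap\bZ,\bbQ)$ that the paper notes in passing; for the purpose of recovering the pattern from the kernel it is enough that $\ker_i \subsetneq \HH^p(\bZ_i,\bbQ)$ if and only if the $i$-th restriction map is nonzero, which is automatic.
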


\begin{remark}
    As an illustration of Theorem~\ref{thm:epsilon-nets-degree-p}, let $Y = \bbR^3$, $\mathcal{Z}$ be a family of irreducible real algebraic curves each homeomorphic to $\bbS^1$, and $\mathcal{X}$ a family of real algebraic surfaces each homeomorphic to $\bbS^1 \times \bbS^1$  
    ($\mathcal{X}$ not necessarily definable)
    but such that $\vcd^1_{\mathcal{X}} < d$. Then, for every $\eps > 0$, we can conclude that given any finite sub-family $\mathcal{Z}_0 \subset \mathcal{Z}$ (in this case necessarily in  $1$-general position), there exists a  subset $S \subset \mathcal{Z}$,
    of cardinality at most $C \cdot d \cdot (1/\eps)\cdot \log ({1/\eps})$
    such that any surface in $\mathcal{X}$ that contains an $\eps$-fraction of the curves in $\mathcal{Z}_0$  which cannot be contracted inside the surface, must also contain a member of $S$ which cannot be contracted inside the surface. Note that such a result is not deducible from the classical $\eps$-net theorem assuming a bound on the classical VC-density.
\end{remark}
\begin{remark}
Note also that in Theorem~\ref{thm:epsilon-nets-degree-p}
we do not assume  that $\mathcal{X},\mathcal{Z}$ are definable
families (say) in some o-minimal expansion of $\mathbb{R}$.  
\end{remark}

Another application of the classical notion of VC-density is related to the fractional Helly number. We now recall this application, and our generalization to higher degrees.

\begin{definition} [Fractional Helly number]
Let $Y$ be a set and $\mathcal{Z} \subset 2^Y$.
We say that the pair $(Y,\mathcal{Z})$ has fractional Helly number bounded by $k$, 
if for every $\alpha, 0 < \alpha \leq 1$, there exists $\beta >0$, such that
for every $n>0$, and 
for every subset $\mathcal{Z}_0 = \{\bZ_0,\ldots,\bZ_n\} \subset_{n+1} \mathcal{Z}$ the following holds:
if there exists $\alpha \cdot \binom{n+1}{k}$ subsets $I$ of $[n]$ of cardinality $k$ with $\cap_{i \in I} \bZ_i \neq \emptyset$, then
there exists a subset $J \subset [n], \card(J) \geq \beta\cdot (n+1)$, such that
$\cap_{j \in J} \bZ_j \neq \emptyset$.
\end{definition}

The following theorem links VC-density with fractional Helly number.

\begin{theorem}\cite{Matousek2004}*{Theorem 2}
\label{thm:fractional-Helly}
Let $Y$ be a set and $\mathcal{Z} \subset 2^Y$, and suppose that
$\vcd_{\mathcal{Z}} < d$. Then 
$(Y,\mathcal{Z})$ has fractional Helly number bounded by $d$.
\end{theorem}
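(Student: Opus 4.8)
This is Theorem~2 of \cite{Matousek2004}, and the plan is to recall that argument, aligning its hypothesis with ours. The proof splits into a reduction to a finite combinatorial situation, followed by a counting argument that upgrades a subpolynomial bound on the number of cells of an arrangement into a \emph{linear} lower bound on the largest multiplicity of a ground element.

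For the reduction, fix $\mathcal{Z}_0 = \{\bZ_0,\dots,\bZ_n\} \subset_{n+1} \mathcal{Z}$ and partition $Y$ into the nonempty fibres of the map $y \mapsto \{\, i \mid y \in \bZ_i \,\}$. Choosing one representative per fibre yields a finite set $Y' \subseteq Y$ such that, setting $\bZ_i' := \bZ_i \cap Y'$, one has $\bigcap_{i \in I}\bZ_i \neq \emptyset \iff \bigcap_{i \in I}\bZ_i' \neq \emptyset$ for every $I \subseteq \{0,\dots,n\}$, and a set $J$ with $\bigcap_{j \in J}\bZ_j \neq \emptyset$ exists exactly when some $y \in Y'$ lies in $\card(J)$ of the $\bZ_i$. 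Hence it suffices to prove the statement for the finite system $\{\bZ_0',\dots,\bZ_n'\}$ on $Y'$. The role of the hypothesis is now visible: $\card(Y')$ is the number of realizable $0/1$-patterns on $\mathcal{Z}_0$, and $\vcd_{\mathcal{Z}} < d$ forces, through the Sauer--Shelah lemma, a bound $\card(Y') = O(n^{e})$ with $e < d$ — this is exactly the form in which the hypothesis of \cite{Matousek2004} (a bound $o(n^{d})$ on the relevant shatter function of $\mathcal{Z}$) is applied.

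It remains to prove the combinatorial heart of the statement. Writing $A_y := \{\, i \mid y \in \bZ_i' \,\}$ for $y \in Y'$, the $d$-element subsets $I$ with $\bigcap_{i\in I}\bZ_i \neq \emptyset$ are precisely the $d$-element subsets of the various $A_y$, so the hypothesis reads $\card\bigl(\bigcup_{y\in Y'}\binom{A_y}{d}\bigr) \geq \alpha\binom{n+1}{d}$, and we must exhibit some $y$ with $\card(A_y) \geq \beta(n+1)$. Naive averaging over the $O(n^{e})$ sets $A_y$ gives only a sublinear lower bound on $\max_y \card(A_y)$, far short of the needed $\Omega(n)$; the crux is to exploit that the $A_y$ are the cells of an arrangement of bounded VC-complexity, not arbitrary subsets of $\{0,\dots,n\}$, which is carried out in \cite{Matousek2004} by an argument that is, in spirit, a combinatorial counterpart of Kalai's Upper Bound Theorem proof of fractional Helly for convex sets. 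I expect this step to be the main obstacle and to carry essentially all the content — the finite reduction and the return to $\mathcal{Z}$ being routine — so in practice I would import it directly from \cite{Matousek2004}.
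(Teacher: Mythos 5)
The paper does not prove Theorem~\ref{thm:fractional-Helly}; it is cited directly from \cite{Matousek2004} and used as a black box in the proof of Theorem~\ref{thm:fractional-helly-degree-p}, so there is no internal argument to compare your sketch against. Your account of Matousek's two-step structure --- reduce to the finite system of cells of the arrangement, then run a counting argument that upgrades a polynomial bound on the number of cells into a linear lower bound on the maximum multiplicity --- is accurate, and deferring the hard combinatorial core back to \cite{Matousek2004} is reasonable given the theorem is imported.

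One caution about how you feed in the hypothesis. The quantity $\card(Y')$ you want to bound is the number of nonempty cells of the arrangement $\{\bZ_0,\dots,\bZ_n\}$, i.e.\ the \emph{dual} shatter function of $\mathcal{Z}$ evaluated at $n+1$, and Matousek's Theorem~2 assumes precisely that this dual shatter function is $o(m^{d})$. But $\vcd_{\mathcal{Z}}$, as defined via \eqref{eqn:def:vd-density} and $\nu_{\mathcal{Z}}(n)=\max_{Y'\subset_n Y}\card(S(Y';\mathcal{Z}))$, is the growth exponent of the \emph{primal} shatter function, and the Sauer--Shelah lemma bounds the primal shatter function in terms of VC-dimension. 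Neither of these controls the dual shatter function with the same exponent $d$ --- passing between primal and dual VC-dimension loses an exponential, not a constant factor. So the step asserting that ``$\vcd_{\mathcal{Z}}<d$ forces, through the Sauer--Shelah lemma, $\card(Y')=O(n^{e})$ with $e<d$'' conflates the two notions and is not sound as written. What is actually needed, and what the paper in fact verifies for the auxiliary system $\mathcal{F}=\{\mathcal{X}_Z\}$ in the proof of Theorem~\ref{thm:fractional-helly-degree-p}, is a polynomial bound on the dual shatter function: there, Lemma~\ref{lem:unique} shows the kernel determines the $\in_p$-trace, so $\vcd^{p}_{\mathcal{X},\mathcal{Z}}<d$ bounds exactly the number of realizable $0/1$-patterns, i.e.\ the dual shatter function of $\mathcal{F}$. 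In short, ``$\vcd_{\mathcal{Z}}$'' in the statement of Theorem~\ref{thm:fractional-Helly} should be understood as the dual density (the paper's ``VC-codensity''), at which point the bound on $\card(Y')$ is immediate from the definition and no appeal to Sauer--Shelah is needed.
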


We now formulate below a higher degree analog of the fractional Helly number.

\begin{definition}[Fractional Helly number in degree $p$]
\label{def:fractional-helly-degree-p}
Let $(Y,\mathcal{X},\mathcal{Z})$ be a triple.
We say that $(Y,\mathcal{X},\mathcal{Z})$ has degree $p$ fractional Helly number bounded by $k$, if for every $\alpha, 0 < \alpha \leq 1$, there exists $\beta > 0$,
such that for every $n > 0$, and 
$\mathcal{Z}_0 = \{\bZ_0,\ldots,\bZ_n\}\subset \mathcal{Z}$,
if for every $I \in \binom{[n]}{k}$, there exists $\bX = \bX_I \in \mathcal{X}$
if there exists $\alpha \cdot \binom{n+1}{k}$ subsets $I$ of $[n]$ of cardinality $k$ 
for which there exists $\bX = \bX_I \in \mathcal{X}$
satisfying $\bZ_i \in_p \bX$ for every $i \in I$, 
then there exists $\bX \in \mathcal{X}$ such that 
\[
\card(\{j \in [n] \mid \bZ_j \in_p \bX\}) \geq \beta \cdot (n+1).
\]
\end{definition}

\begin{remark}
\label{rem:fractional-helly-degree-0}
    The ordinary notion of fractional Helly number of the family $\mathcal{Z}_0$ is recovered by taking $p=0$ and the family
    $\mathcal{X} = \{\{y\} \mid y \in Y\}$ 
    as in Remark~\ref{rem:epsilon-nets-finite}. Observe that in this case
    `there exists $\bX = \bX_I \in \mathcal{X}$
satisfying $\bZ_i \in_p \bX$ for every $i \in I$' translates to
`$\bigcap_{i \in I} \bZ_i \neq \emptyset$'.
\end{remark}

\begin{theorem}[Fractional Helly's theorem in degree $p$]
\label{thm:fractional-helly-degree-p}
Let $p,(Y,\mathcal{X},\mathcal{Z})$ satisfy
the same hypothesis as in Theorem~\ref{thm:epsilon-nets-degree-p}, 
and suppose in addition that $\mathcal{Z}$ is in $p$-general position.
Then, $(Y,\mathcal{X},\mathcal{Z})$ 
has degree $p$ fractional Helly number bounded by $d$.
\end{theorem}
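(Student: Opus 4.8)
The plan is to deduce Theorem~\ref{thm:fractional-helly-degree-p} from the classical fractional Helly theorem (Theorem~\ref{thm:fractional-Helly}) applied to an auxiliary set system on the index set, in complete parallel with the standard derivation of fractional Helly from bounded VC-density. First I would fix the triple $(Y,\mathcal{X},\mathcal{Z})$ satisfying the hypotheses of Theorem~\ref{thm:epsilon-nets-degree-p}, and fix a finite subfamily $\mathcal{Z}_0 = \{\bZ_0,\ldots,\bZ_n\}\subset\mathcal{Z}$ in $p$-general position. The key move is to reinterpret the relation $\in_p$ as an incidence relation and build from $\mathcal{X}$ a dual family of subsets of the finite set $[n] = \{0,\ldots,n\}$ (or rather of $\mathcal{Z}_0$): for each $\bX\in\mathcal{X}$ set
\[
\bX^\sharp = \{\, i \in [n] \mid \bZ_i \in_p \bX \,\} \subseteq [n],
\]
and let $\mathcal{X}^\sharp_0 = \{\bX^\sharp \mid \bX \in \mathcal{X}\}$. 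The hypothesis $\vcd^p_{Y,\mathcal{X},\mathcal{Z}} < d$, together with the $p$-general position assumption on $\mathcal{Z}_0$ and the combinatorial identification built into the cohomological definition of $\vcd^p$ (Observation following \eqref{eqn:def:vcd:p}, relating $\bbS^p$ to the set-shattering count), must be used to show that the family $\mathcal{X}^\sharp_0$, regarded as a family of subsets of the $n{+}1$ element ground set $\mathcal{Z}_0$, has (classical) VC-density $< d$, uniformly in $n$. Once this is in hand, Theorem~\ref{thm:fractional-Helly} applied to the pair $([n],\mathcal{X}^\sharp_0)$ — note Theorem~\ref{thm:fractional-Helly} is stated for the family itself playing the role of $\mathcal{Z}$, so here it is the dual family $\mathcal{X}^\sharp_0$ whose VC-density we bound — gives a fractional Helly number bounded by $d$ for $([n],\mathcal{X}^\sharp_0)$, and unwinding the definitions of $\bX^\sharp$ this is exactly the assertion that $(Y,\mathcal{X},\mathcal{Z})$ has degree $p$ fractional Helly number bounded by $d$.

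Concretely I would carry out the steps in this order. Step 1: Given $\alpha>0$, apply the hypothesis that there are $\alpha\binom{n+1}{k}$ sets $I \in \binom{[n]}{k}$ (with $k=d$) each admitting $\bX_I\in\mathcal{X}$ with $\bZ_i\in_p \bX_I$ for all $i\in I$; translating via $\sharp$, each such $I$ satisfies $I\subseteq \bX_I^\sharp$, i.e. $\bigcap_{i\in I}\{F\in\mathcal{X}^\sharp_0 : i\in F\}\neq\emptyset$ — this is precisely an $\alpha$-fraction of $k$-subsets being ``pierced'' in the dual system. Step 2: Invoke the VC-density bound to certify $\vcd_{\mathcal{X}^\sharp_0} < d$; this is where Theorem~\ref{thm:epsilon-nets-degree-p}'s hypotheses (especially that each $\bZ$ is irreducible of dimension $\le p$ and $\mathcal{Z}_0$ is in $p$-general position, which forces the cohomological kernels $\ker(\HH^p(\bigcup\mathcal{Z}_0,\bbQ)\to\HH^p(\bigcup\mathcal{Z}_0\cap\bX,\bbQ))$ to be in bijection with the subsets $\bX^\sharp$ — the general position guaranteeing the relevant Mayer–Vietoris spectral sequence degenerates enough that $\HH^p(\bigcup\mathcal{Z}_0,\bbQ)\cong\bigoplus_i \HH^p(\bZ_i,\bbQ)$ and $\in_p$ detects exactly the nonvanishing summands) are used. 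Step 3: Apply Theorem~\ref{thm:fractional-Helly} to $([n],\mathcal{X}^\sharp_0)$ to obtain $\beta=\beta(\alpha)>0$ and a set $J\subseteq [n]$, $\card(J)\ge \beta(n+1)$, with $\bigcap_{j\in J}\{F : j\in F\}\neq\emptyset$, i.e. some $\bX\in\mathcal{X}$ with $J\subseteq\bX^\sharp$. Step 4: Read this back: $\card(\{j\in[n] : \bZ_j\in_p\bX\}) = \card(\bX^\sharp) \ge \card(J) \ge \beta(n+1)$, which is the conclusion of Definition~\ref{def:fractional-helly-degree-p}. The constant $\beta$ depends only on $\alpha$ and $d$ and not on $n$, as required.

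The main obstacle I expect is Step 2: establishing that the cohomological VC-density bound $\vcd^p_{Y,\mathcal{X},\mathcal{Z}}<d$ really does control the \emph{combinatorial} VC-density of the index-set family $\mathcal{X}^\sharp_0$. The subtlety is that a priori $\vcd^p$ counts distinct \emph{kernels} $\ker(\HH^p(\bigcup\mathcal{Z}_0,\bbQ)\to\HH^p(\bigcup\mathcal{Z}_0\cap\bX,\bbQ))$, not distinct traces $\bX^\sharp = \{i : \bZ_i\in_p\bX\}$; these two counts agree only when the natural map $\HH^p(\bigcup\mathcal{Z}_0,\bbQ)\to\bigoplus_i\HH^p(\bZ_i,\bbQ)$ is injective (so that a kernel is determined by, and determines, the set of indices where it projects nontrivially), and \emph{this} is exactly what $p$-general position of $\mathcal{Z}_0$ buys us — it kills the higher $\HH^{>0}$ of all the intersections $\bZ_{i_0}\cap\cdots\cap\bZ_{i_k}$ for $k\le p$ that would otherwise contribute to $\HH^p(\bigcup\mathcal{Z}_0,\bbQ)$ via the Mayer–Vietoris / nerve spectral sequence. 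So the heart of the argument is a careful bookkeeping with that spectral sequence to show $\HH^p(\bigcup_i\bZ_i,\bbQ)\cong\bigoplus_i\HH^p(\bZ_i,\bbQ)$ under $p$-general position, and then that under this identification $\ker(\cdots\to\HH^p(\bigcup\mathcal{Z}_0\cap\bX,\bbQ))$ is precisely the span of those summands $\HH^p(\bZ_i,\bbQ)$ with $\bZ_i\notin_p\bX$ (using that $\mathcal{Z}_0$ irreducible of dimension $\le p$ makes $\HH^p(\bZ_i,\bbQ)$ one-dimensional or zero, so the restriction to $\bZ_i\cap\bX$ is either injective or zero). Granting that, $\card(\bbS^p(\mathcal{Z}_0;\mathcal{X})) = \card(\mathcal{X}^\sharp_0)$, so $\vcd^p_{Y,\mathcal{X},\mathcal{Z}} = \vcd_{\mathcal{X}^\sharp_0}$ uniformly, and the rest is the classical reduction above.
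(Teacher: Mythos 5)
Your proposal is correct and follows essentially the same route as the paper's proof: the step you flag as the crux (Step~2 --- that under $p$-general position the Mayer--Vietoris spectral sequence degenerates so that the cohomological kernel $\ker(\HH^p(\bigcup\mathcal{Z}_0,\bbQ)\to\HH^p(\bigcup\mathcal{Z}_0\cap\bX,\bbQ))$ determines and is determined by the trace set $\{i : \bZ_i \in_p \bX\}$) is precisely the paper's Lemma~\ref{lem:unique}, after which the reduction to Theorem~\ref{thm:fractional-Helly} is the formal one you describe. The only difference is a choice of dual bookkeeping --- you work with the trace system $\mathcal{X}^\sharp_0\subset 2^{[n]}$ while the paper forms the family $\mathcal{F}=\{\mathcal{X}_Z : Z\in\mathcal{Z}\}\subset 2^{\mathcal{X}}$ with $\mathcal{X}_Z=\{\bX : \bZ\in_p\bX\}$, and what both arguments actually hand to Matou\v{s}ek's theorem is the dual shatter function of $\mathcal{F}$, which is your primal shatter function of $\mathcal{X}^\sharp_0$ --- and a small notational slip in Step~3: the pair you really apply Theorem~\ref{thm:fractional-Helly} to is $(\mathcal{X}^\sharp_0,\,\{\{F\in\mathcal{X}^\sharp_0 : j\in F\} : j\in[n]\})$ rather than $([n],\mathcal{X}^\sharp_0)$ as written, though your Step~1 hypothesis and Step~4 conclusion make the intent unambiguous.
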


\section{Preliminary Background and results}
\label{sec:prelim}
In this section we recall some facts and results that we will need in the
proof of Theorem~\ref{thm:main}. Since these facts are more easily accessible in the complex analytic setting (i.e. in the second category 
of Table~\ref{tab:versions} we start first with this category, and 
explain later the corresponding results in the other tow categories.

\subsection{Constructible sheaves in complex analytic topology}
In this subsection, we work with schemes $X$ of finite type over $\mathbb{C}$. Given such a scheme, one has an associated complex analytic space $X^{an}$ with its underlying complex topology. In the following, we fix a commutative noetherian ring $R$ of finite Krull dimension, and consider sheaves of $R$-modules on $X$ in the complex analytic topology (i.e. it is a sheaf on $X^{an}$, but by abuse of notation we shall refer to these as sheaves on $X$). We denote by $Sh(X,R)$ the abelian category of sheaves of $R$-modules, and by $\rD^{b}(X,R)$ the corresponding derived categories. Given a morphism $f: X \rightarrow Y$ (of finite type), we denote by $f^*: Sh(Y,R) \rightarrow Sh(X,R)$ and $f_*: Sh(X,R) \rightarrow Sh(Y,R)$ the usual pull-back and push-forward functors. We remind the reader that $f^{*}$ is an exact functor, while $f_*$ is a left-exact functor. We denote by $Rf^*$ and $Rf_*$ the resulting derived functors (on the corresponding derived categories), and $R^if_*$ the $i$-th cohomology of $Rf_*$. Note that if $f: X \rightarrow \Spec(\bbC)$ is the structure morphism, then $R^if_*(\cF) = \rH^i(X,\cF)$ for a sheaf $\cF$ on $X$. Here the right hand side is the usual sheaf cohomology groups (on $X^{an})$.

\subsubsection{Stratifications}
A {\it stratification} of $X$ is a finite collection of locally closed (in the Zariski topology) subsets  $S_i \subset X$ ($i \in  I$) (considered as a subscheme with its canonical induced reduced structure) such that $X = \coprod S_i$. We refer to $|I|$ as the {\it length} of the stratification. We refer to the $S_i$ as strata (or stratum) and use the notation $\cS$ to denote the collection of strata $S_i$.
Suppose $X = X_0 \supsetneq X_1 \supsetneq X_2 \supsetneq \cdots \supsetneq X_n \neq \emptyset$ is a filtration of $X$ by (Zariski) closed subsets. Then we may associate a canonical stratification of length $n +1$. We set $S_i := X_{i} \setminus X_{i+1}$ for $0 \leq i < n$ and $S_n = X_n$. Note that each $S_i$ is locally closed. A {\it refinement} of a stratification $\cS$ is a stratification $\cS'$ such that each stratum $S_i \in \cS$ is a union of strata $S_i' \in \cS'$. 

\subsubsection{Locally constant sheaves}
Given an $R$-module $M$, we denote by $\underbar{M}$ the constant sheaf on $X$. In particular, $\underbar{M}$ is the sheaf associated to the constant pre-sheaf whose values on any open is the $R$-module $M$ (with identity maps as restriction maps). We note that the sections of $\underbar{M}$ on an open $U$ are given by locally constant functions $U \rightarrow M$ where $M$ is given the discrete topology. Note that if $U$ is connected, then such a function must be constant, and therefore the sections of $\underbar{M}$ on such an open are given by $M$. A {\it locally constant sheaf} $\cF$ on $X$ is a sheaf $\cF$ such that for every $x \in X$ there is an open neighborhood $x \in U$ (in the complex analytic topology) so that that the restriction of $\cF$ to $U$ is a constant sheaf (by restriction we mean the pull-back of $\cF$ along the inclusion morphism). The category of locally constant sheaves is a full abelian sub-category of $Sh(X,R)$. 

\begin{remark}
\label{rem:localsystemanalytic}
If we fix a base point, then we may identify the category of local systems of $R$-modules on $X$ with the category of representations $\phi: \pi_1(X,x) \rightarrow \Aut_R(M)$ of the fundamental group of $X^{an}$ in $R$-modules $M$ (\cite{Del-regsing}, 1.3).\footnote{Note that in loc. cit. this is proved for topological spaces which are locally path connected and locally simply path connected. These conditions are automatic for the complex analytic spaces we consider.} The constant sheaves identify with the trivial representations. The functor sends a local system $\cL$ to its stalk at $x$. One can deduce the following three properties from this equivalence of categories:
\begin{enumerate}
\item The category of local systems of $R$-modules is an abelian category. This can also be deduced directly from the definitions.
\item If $f: \underbar{M} \rightarrow \cL$ is a morphism from a constant local system to a local system, then the image is a constant local system. The point is that the monodromy action on the image is trivial.
\item As a consequence of the last part, the kernel of $f$ is a constant local system. To see this note that the kernel of a morphism of {\it constant} local systems $\underbar{M} \rightarrow \underbar{N}$ is the constant local system associated with the kernel of the morphism $M \rightarrow N$ induced on global sections (we assume $X$ is connected).
    
\end{enumerate}
\end{remark}

Let $\pi: \cF \rightarrow \cG$ be a morphism of local systems of $R$-modules on $X$. Suppose that $\cF$ is a constant local system associated to an $R$-module $M$. For each $x \in X$, let $\pi_x: M = \Gamma(X,\cF) \rightarrow \Gamma(X, \cG) \rightarrow \cG_x$ denote the resulting morphisms from the global sections of $\cF$ to the stalk of $\cG$ at $x \in X$. Here the second arrow is the usual restriction map sending a global section to its germ at $x \in X$. Let $M(\pi,x) \subset M$ denote the kernel of $\pi_x$.
The following proposition will be a key tool in the following.

\begin{proposition}\label{prop:kernelisindep}
With notation as above, $M(\pi,x)$ is independent of $x$.
\end{proposition}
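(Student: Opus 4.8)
The statement to prove is that for a morphism $\pi\colon \cF \to \cG$ of local systems of $R$-modules on $X$ with $\cF = \underline{M}$ constant, the kernel $M(\pi,x) = \ker(\pi_x\colon M \to \cG_x)$ is independent of $x \in X$. The plan is to reduce everything to the monodromy description of local systems recalled in Remark~\ref{rem:localsystemanalytic}, together with the observation that the image of a morphism out of a constant local system is constant. First I would factor $\pi$ through its image: write $\pi = \iota \circ \bar\pi$ where $\bar\pi\colon \underline{M} \onto \cI$ is the (categorical) surjection onto the image local system $\cI = \im(\pi)$ and $\iota\colon \cI \into \cG$ is the inclusion. Since local systems form an abelian category, $\cI$ is itself a local system, and by part (2) of Remark~\ref{rem:localsystemanalytic} it is a \emph{constant} local system, say $\cI = \underline{N}$. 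Moreover $\iota$ is a monomorphism of local systems, so on stalks $\iota_x\colon N = \cI_x \to \cG_x$ is injective for every $x$. Hence $\ker(\pi_x) = \ker(\bar\pi_x)$ for all $x$, and it suffices to prove the claim for the surjection $\bar\pi\colon \underline{M} \to \underline{N}$ of constant local systems.

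For a morphism $\underline{M}\to\underline{N}$ of \emph{constant} local systems on a connected $X$, it is induced by a single $R$-linear map $\phi\colon M\to N$ on global sections (by the equivalence of categories, or just because $\Hom(\underline{M},\underline{N}) = \Hom_R(M,N)$ when $X$ is connected, which one can see directly by evaluating on a connected open cover). Then for each $x$, the stalk map is just $\phi$ again under the canonical identifications $(\underline{M})_x \cong M$, $(\underline{N})_x\cong N$, so $\ker(\bar\pi_x) = \ker\phi$, manifestly independent of $x$. Combining with the previous paragraph gives $M(\pi,x) = \ker\phi$ for all $x$, where now $\phi\colon M \to N$ is the map on global sections induced by $\bar\pi$. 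If $X$ is not assumed connected, I would run the argument on each connected component separately — but the paper is already implicitly assuming $X$ connected in Remark~\ref{rem:localsystemanalytic}(3), so I would simply note that hypothesis (or restrict to a component).

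The only subtlety — and the step I would be most careful about — is justifying that the image of $\pi$ is a constant local system and that passing to it does not change the kernel. The first point is exactly Remark~\ref{rem:localsystemanalytic}(2): the monodromy representation $\pi_1(X,x)\to\Aut_R(N)$ on $\cI_x$ is a subrepresentation of a quotient of the trivial representation on $M$, hence trivial, so $\cI$ is constant. The second point requires knowing that the formation of the stalk $\cG \rightsquigarrow \cG_x$ and of kernels commute, which holds because taking stalks is exact, so that $\ker(\pi_x) = (\ker \pi)_x$ and the inclusion $\cI \into \cG$ stays injective on stalks; hence $\ker(\iota_x\circ\bar\pi_x) = \ker(\bar\pi_x)$. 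With those two facts in hand the argument is essentially formal, so I do not anticipate genuine difficulty — the writeup is mostly a matter of assembling Remark~\ref{rem:localsystemanalytic} correctly and being explicit about the canonical identification of the stalks of a constant sheaf with the defining module.
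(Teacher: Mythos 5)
Your proof is correct and essentially the same as the paper's: the paper simply takes the kernel sheaf $\cN = \ker\pi$, cites Remark~\ref{rem:localsystemanalytic}(3) to see that $\cN$ is a constant local system, and then notes that $M(\pi,x) = \cN_x = \Gamma(X,\cN)$; you instead re-derive part (3) of that remark by factoring through the image (which is constant by part (2)) and then analyzing a morphism of constant local systems, but both routes go through the same key facts about exactness of stalks and the triviality of monodromy on sub- and quotient-objects of a constant local system.
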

\begin{proof}
Let $\cN$ be the kernel of $\pi$. By the previous remark, this is a constant local system. On the other hand, the kernels $M(\pi,x)$ are by definition the kernel of the stalks $M = \underbar{M}_x \rightarrow \cG_x$ (recall that the stalk $\underbar{M}_x =M$ and the restriction map $M = \underbar{M} \rightarrow M = \underbar{M}_x$ is the identity map). It follows that $\cN_x = M(\pi,x)$, and $\cN_x =\Gamma(X, \cN) \subset M$.
\end{proof}

We record the following lemma for future use.

\begin{lemma}\label{lem:projectionisconstant}
Let $p: X \times Y \rightarrow Y$ be the natural projection map, $\cF$ a constant sheaf on $X$, and $\cG$ a sheaf on $Y$. Then $Rp_*(\cF \boxtimes \cG)$ is a constant sheaf i.e. $R^ip_*(\cF \boxtimes \cG)$ is a constant sheaf.
\end{lemma}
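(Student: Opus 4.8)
The plan is to reduce, using the fact that external products of constant sheaves are constant, to the statement that the derived pushforward of a constant sheaf along the \emph{trivial} fibration $p\colon X\times Y\to Y$ remains constant; the heart of the matter is that a product projection has no monodromy. Concretely, I would write $\cF=\underline{M}$ and $\cG=\underline{N}$ for $R$-modules $M,N$, and let $\pi_X\colon X\times Y\to X$ be the other projection, so that $\cF\boxtimes\cG=\pi_X^{*}\cF\otimes_{\underline R}p^{*}\cG$. Since the pullback of a constant sheaf along any continuous map is the constant sheaf with the same value, $\pi_X^{*}\cF$ and $p^{*}\cG$ are both constant on $X\times Y$, and hence so is $\cF\boxtimes\cG=\underline{M\otimes_{R}N}$ (for a general sheaf $\cG$ one would instead invoke the projection formula $Rp_{*}(\cF\boxtimes\cG)\cong(Rp_{*}\underline{M}_{X\times Y})\otimes^{L}\cG$, which once more reduces matters to computing $Rp_{*}\underline{M}_{X\times Y}$). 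It thus suffices to prove that $R^{i}p_{*}\underline{M}_{X\times Y}$ is a constant sheaf on $Y$ for every $i$.

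For this I would use that $p$ is the base change of the structure map $a\colon X\to\mathrm{pt}$ along $c\colon Y\to\mathrm{pt}$, and examine the base-change morphism $c^{*}Ra_{*}\underline{M}_{X}\to Rp_{*}\underline{M}_{X\times Y}$. Both sides have stalk $R\Gamma(X,\underline{M})$ at every $y\in Y$: trivially on the left, and on the right by homotopy invariance, since $Y$ is a good (in particular locally contractible) space, so that over a cofinal system of contractible opens $V\ni y$ one has $p^{-1}(V)=X\times V\simeq X$ and hence $(R^{i}p_{*}\underline M)_{y}\cong H^{i}(X,\underline M)$. The base-change morphism is therefore an isomorphism on stalks, hence an isomorphism of sheaves, so $Rp_{*}\underline{M}_{X\times Y}\cong c^{*}\underline{R\Gamma(X,\underline M)}_{\mathrm{pt}}$ is a complex of constant sheaves on $Y$; passing to $R^{i}$ (and using that $\bbQ$ is a field, so that this complex is formal) gives $R^{i}p_{*}\underline{M}_{X\times Y}\cong\underline{H^{i}(X,\underline M)}_{Y}$, which is what we want. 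Equivalently, $R^{i}p_{*}\underline{M}_{X\times Y}$ is the local system on $Y$ attached to the monodromy action of $\pi_{1}(Y)$ on $H^{i}(X,\underline M)$, and that action is trivial precisely because the fibration $p$ is a product.

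I expect the only genuine obstacle to be this last passage from \emph{locally} constant to \emph{globally} constant. A priori $R^{i}p_{*}\underline{M}_{X\times Y}$ only records that $p$ is a locally trivial fibration with fibre $X$, which on its own would merely yield a local system on $Y$; the global product structure of $X\times Y$ — equivalently, the availability of base change along $Y\to\mathrm{pt}$ — is exactly what forces the monodromy to be trivial, and this is the step that uses that $p$ is an honest product rather than, say, a Zariski-locally trivial bundle. The remaining ingredients (pullbacks and external products of constant sheaves being constant, the projection formula, homotopy invariance over good spaces) are standard, and the same argument goes through in the o-minimal and — with the usual $\ell$-adic caveats recalled earlier in this section — the étale settings.
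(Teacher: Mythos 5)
Your argument for constant $\cG$ is correct and, modulo phrasing, the same as the paper's: use the projection formula to strip off the constant factor, then show that $Rp_*$ of a pulled-back constant sheaf is constant on the base. The paper does this last step by observing that the presheaf $U\mapsto R\Gamma(p^{-1}U,-)$ takes the same value over every contractible $U$; you do it by comparing $c^*Ra_*$ with $Rp_*$ on stalks via the base-change morphism. These are the same content stated differently, and your remark that base change from the point is precisely what trivializes the monodromy of what is a priori only a local system is a correct and useful observation about why the argument needs an honest product.

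The gap is in the parenthetical for general $\cG$. With the projection $p\colon X\times Y\to Y$ that you (and the Lemma's statement) use, the projection formula gives $Rp_*(\cF\boxtimes\cG)\cong (Rp_*\underline{M}_{X\times Y})\otimes^{L}\cG$, and knowing the left factor is a complex of constant sheaves does \emph{not} make this constant: a constant sheaf $\underline{K}$ tensored with $\cG$ has stalks $K\otimes\cG_y$ and is constant only when $\cG$ is. Taking $X=\mathrm{pt}$ and $\cF=\underline{R}$ gives $Rp_*(\cF\boxtimes\cG)\cong\cG$, a counterexample. The source of the trouble is that the Lemma's statement has the projection going the wrong way relative to what the proof actually computes: reading the paper's proof through its notational typos, the pushforward there is to $X$ (the side where $\cF$ lives), integrating out the $Y$-factor where $\cG$ lives; in that direction $Rp_*(\cF\boxtimes\cG)\cong\cF\otimes^{L}\underline{R\Gamma(Y,\cG)}$ is a derived tensor of two constant sheaves on $X$ and is indeed constant for any $\cG$. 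You inherited the statement's typo, so your ``general $\cG$'' reduction does not close as written. Since the paper only ever invokes the Lemma with both $\cF$ and $\cG$ equal to the constant sheaf $\bbQ$, no downstream error results, but you should be aware that the reduction to computing $Rp_*\underline{M}_{X\times Y}$ is valid only for constant $\cG$ in the direction you chose.
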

\begin{proof}
Let $q: X \times Y \rightarrow X$ denote the projection map. By definition, $\cF \boxtimes \cG \isom p^{*}\cF \otimes q^*\cG$ and therefore,
by the projection formula, $Rp_*(\cF \boxtimes \cG) \isom \cF \boxtimes Rp_*(\cG).$ The RHS can be computed via the (homological) Tor spectral sequence with $E^2_{i,j}$ terms given by $\mathrm{Tor}_i(\cF,R^jp_*(\cG))$. Therefore, it is enough to show that each of these terms is locally constant. On the other hand, $Rp_*(\cG)$ is the sheaf associated to $U \mapsto R\Gamma(U \times Y, \cG)= R\Gamma(Y,G)$. The result follows.
\end{proof}

\begin{remark}
Note that in our applications, we will mostly work with local systems of vector spaces. In that case, $\cF$ is flat. Therefore, there are no higher Tor's and the result is immediate from the projection formula.
\end{remark}

\subsubsection{Constructibility}\label{subsubsec:constructible}
A sheaf $\cF \in Sh(X,R)$ is $\cS$-{\it constructible} for a stratification $\cS$ if the restriction of $\cF$ to each stratum is a locally constant sheaf and all its stalks are finitely generated $R$-modules (see \cite{Dimca-book}*{Chapter 4} or \cite{Deligne-4.5}*{page 43}). 
A sheaf is {\it constructible} if there is a stratification of $X$ for which the sheaf is constructible. An object $\cF \in \rD^b(X)$ is $\cS$-constructible (respectively constructible) if all its homology sheaves are $\cS$-constructible (respectively constructible). We record the following facts for future reference:
\begin{enumerate} 
\item The full subcategory $Sh_{\cS}(X,R)$ (respectively $Sh_{c}(X,R)$) of $\cS$-constructible (respectively constructible) sheaves is an abelian sub-category. 
\item Let $\rD^b_{c}(X,R) \subset \rD^b(X,R)$ denote the full sub-category of constructible sheaves. The functors $Rf^*$ and $Rf_*$ preserve the category of constructible sheaves.
\end{enumerate}

\subsection{Simplicial objects of a category $\cA$}
Let $\Delta$ denote the usual simplex category. Recall, the objects of $\Delta$ are given by totally ordered sets $[n] := \{0,\ldots,n\}$ and morphisms are given by order preserving morphisms. Among such morphisms we have the standard face and degeneracy maps:
\begin{enumerate}
\item[1:](Face Maps) These are the inclusions $d_i:[n] \hookrightarrow [n+1]$ which `skip' $i$ (for $0\leq i \leq n+1)$.
\item[2:](Degeneracy Maps) These are the surjections $\delta_i: [n] \rightarrow [n-1]$ which repeat $i$ (for $0 \leq i \leq n-1$).
\end{enumerate}
Let $\Delta[k]$ denote the full subcategory of $\Delta$ consisting of the objects $[0],\ldots,[k]$. We also consider the extended category $\Delta_+$ defined by introducing an initial object denoted by $\emptyset$ (or sometimes $-1$). One also has the analogous extended category $\Delta[k]_+$.

\begin{definition}
A ($k$-trunctated) simplicial object of a category $\cC$ is a functor $F: \Delta^{op} \rightarrow \cC$ (respectively $\Delta[k]^{op} \rightarrow \cC$). In other words, it is a $\cC$-valued presheaf on $\Delta$ (respectively $\Delta[k]$). Let $\cC^{\bullet}$ (respectively $\cC^{\leq k}$) denote the category of simplicial (respectively $k$-truncated simplicial) objects (with morphisms given by morphisms of pre-sheaves). An augmented (respectively $k$-truncated) simplicial object is a $\cC$-valued presheaf on $\Delta_+$ (respectively $\Delta[k]_+$).
\end{definition}

\begin{example}
In the following, $\cC$ will be one of the following categories:
\begin{enumerate}
\item The category of schemes (finite type) over an algebraically closed field $K$.
\item The category of topological spaces.
\item The category of definable topological spaces in some o-minimal structure (regarded, for example, as a sub-category of the previous example).
\end{enumerate}
We note that finite limits exist in all of the aforementioned examples, and in particular fiber products exist.
\end{example}

\begin{example}
(i) An object $Z \in \cC$ gives rise to the constant simplicial object, denoted $Z_{\bullet}$, whose $n$-th term is $Z$ and all the face and degeneracy maps are the identity morphism.\\
(ii) If $X \rightarrow S$ is a morphism in $\cC$, and $\cC$ has fiber products, then the 0-th coskeleton of $X \rightarrow S$ is given by by the simplicial object whose $k$-th term is $X \times_S \cdots \times_S X$ where the product is taken $k+1$-times. The face maps are given by the natural projection maps. The degeneracy maps are given by various `multi-diagonal' maps. We denote by $cosk(X/S)$ the resulting simplicial object.\\
\end{example}

\subsection{Constructible sheaves in the \'{e}tale topology and definable spaces}
In this section, we recall some basic facts on the analogs of the constructions of the previous sections in the setting constructible sheaves in the \'{e}tale topology and in the o-minimal setting. 
The use of the \'{e}tale topology will allows us to extend the VC-density results to the $\mathrm{ACF}$  case (for arbitrary algebraically closed fields including those of characteristic $p > 0$) and the latter to the o-minimal setting.

\subsubsection{The \'{E}tale setting}\label{subsec:constructibleetale}
We fix a base field $K$ of characteristic $p$  (possibly equal to 0) and a prime $\ell \neq p$. Let $\Lambda = \bbZ/\ell\bbZ$ with $\ell \neq p$. Given a scheme $X$, we can consider the corresponding \'{e}tale topology. We denote by $X_{et}$ the resulting \'{e}tale site. In this setting, one can define the analogous notions of local systems, and constructible sheaves in the \'{e}tale topology. We briefly recall the definitions and some properties of the resulting categories which will be used in the following. 

As in the complex analytic case, a $\Lambda$-module $M$ defines a constant sheaf $\underbar{M}$ in the \'{e}tale topology i.e. an object of the category $Sh(X_{et},\Lambda)$ of sheaves of $\Lambda$-modules in the \'{e}tale topology. A {\it local system} (or locally constant sheaf) $\cL$ of $\Lambda$-modules on $X$ is a sheaf of finite $\Lambda$-modules which is locally constant i.e. there is an open cover $U_i$ in the \'{e}tale topology of $X$ such that $\cL|_{U_i}$ is a constant sheaf. 

\begin{remark}\label{rem:localsystemsetale}
In the aforementioned setting, and for a geometric point $x \in X$, one can define the \'{e}tale fundamental group $\pi_1^{et}(X,x)$. The analog of Remark \ref{rem:localsystemanalytic} continues to hold in this setting with the usual fundamental group replaced by the \'{e}tale fundamental group. In particular:
\begin{enumerate}
 \item The category of local systems of $\Lambda$-modules in the \'{e}tale topology is a full abelian subcategory of the $Sh(X_{et},\Lambda).$
 \item If $f: \cL \rightarrow \cM$ is a morphism of local systems where $\cL$ is the constant local system, then $ker(f)$ is a constant local system.
\end{enumerate}
\end{remark}

One can define the notion of constructible sheaves of $\Lambda$-modules as in the complex analytic case. In particular, a sheaf $\cF$ of $\Lambda$-module is constructible with respect to a stratification $\cS$ if the restriction of $\cF$ to each stratum is locally constant. The analogs of the assertions with regards to the derived category of constructible sheaves and $\cS$-constructible sheaves from \ref{subsubsec:constructible}
continue to hold in the \'{e}tale setting.

We note that in the discussion above one may take $\Lambda$ to be more generally any finite ring. It is also possible to define analogous notions with $\bbZ_{\ell}$ or $\bbQ_{\ell}$-coefficients. However, in this case the definition of local systems and constructible sheaves are more subtle. Moreover, the analog of Remark \ref{rem:localsystemsetale} is false in general (it is true for geometrically unibranch schemes). One way to remedy the situation is to replace the \'{e}tale topology by the pro-\'{e}tale topology define by Bhatt-Scholze (\cite{BS}). With finite coefficients, the resulting categories of local systems and constructible sheaves are the same. We do not recall the details and only note that our main theorems continue to hold for \'{e}tale cohomology with coefficients in $\bbQ_{\ell}$ (or $\bar{\bbQ}_{\ell}$).

\subsubsection{The o-minimal setting}\label{subsubsec:o-minimalsheaves}
Consider an o-minimal expansion of $\mathbb{R}$. The topology on $\mathbb{R}^n, n >0$ is the Euclidean topology generated by open balls.  
One can define the analogous notions of local systems, and constructible sheaves in the above topology. 
A stratification of a definable set $X$ is a finite partition of $X$
into locally closed definable sets.
Local systems and constructible sheaves are defined as in the complex analytic case using the above notion of stratification (see for example \cite{Schurmann}*{Section 2.2}).

One can define the notion of constructible sheaves of $R$-modules as in the complex analytic case. In particular, a sheaf $\cF$ of $R$-modules is constructible with respect to a stratification $\cS$ if the restriction of $\cF$ to each stratum is locally constant. The analogs of the assertions with regards to the derived category of constructible sheaves and $\cS$-constructible sheaves from \ref{subsubsec:constructible}
continue to hold in the this setting.\\

\subsection{Sheaves on Simplicial schemes and spaces.}
Let $\cX_{\bullet}$ be a simplicial topological space. A sheaf on $\cX_{\bullet}$ consists of the following data: 
\begin{enumerate}
\item A sheaf $\cF_k$ on $\cX_k$.
\item For a morphism $\phi: [k] \rightarrow [k']$ in $\Delta$, there is a map
$$\phi^*\cF_{k} \rightarrow \cF_{k'}.$$
\end{enumerate}
Moreover, the morphisms in the second part above are required to be compatible with composition of morphisms. A sheaf of $R$-modules is a sheaf as above where each $\cF_n$ is a sheaf of $R$-modules, and the morphisms in part (2) above are required to be morphisms of sheaves of $R$-modules. We denote by $Sh(\cX_{\bullet},R)$ the category of sheaves of $R$-modules. We note that this is an abelian category, and denote by $\rD^b(\cX_{\bullet},R)$ the corresponding derived category.

If $\cX_{\bullet}$ is a simplicial scheme, then we define the category of constructible sheaves as before by requiring each $\cF_k$ to be constructible. As before, given a morphism $f: \cX_{\bullet} \rightarrow \cY_{\bullet}$ we have natural push-forward and pull-back functors. If $\varepsilon: \cX_{\bullet} \rightarrow S$ is an augmentation, then one also has push-forward and pull-back functors:
$$R\varepsilon_*: \rD^b(\cX_{\bullet},R) \rightarrow \rD^b(S,R),$$
and
$$\varepsilon^*: \rD^b(S,R) \rightarrow \rD^b(\cX_{\bullet},R).$$
We note that these functors preserve constructible sheaves.

\begin{remark}
One can define constructible sheaves on a simplicial scheme in the \'{e}tale topology in an entirely analogous manner. Similar remarks apply to the o-minimal setting.
\end{remark}

We end this subsection by recording the following simplicial analog of Lemma \ref{lem:projectionisconstant}.

\begin{lemma}\label{lem:projectionisconstantsimplicial}

Let $p: \cX_{\bullet} \times Y \rightarrow Y$ be the natural projection map, where $\cX_{\bullet}$ is a simplicial scheme. Let $\cF_{\bullet}$ be a constant sheaf on $\cX_{\bullet}$, and $\cG$ a constant sheaf on $Y$. Then $Rp_*(\cF_{\bullet} \boxtimes \cG)$ is a constant sheaf given by $\cG \otimes R\Gamma(\cX_{\bullet},\cF_{\bullet})$. In particular, $R^ip_*(\cF_{\bullet} \boxtimes \cG)$ is a constant sheaf on $Y$ given by $R^ip_*(\cF_{\bullet}) \otimes \cG$

\end{lemma}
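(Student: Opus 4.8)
The plan is to reduce the statement to the non-simplicial Lemma~\ref{lem:projectionisconstant} (i.e.\ Lemma \ref{lem:projectionisconstant}'s companion, Lemma on projections $p\colon X\times Y\to Y$) applied term-by-term, and then control the simplicial (descent) spectral sequence. First I would observe that for each $k$ the restriction $p_k\colon \cX_k\times Y\to Y$ is the ordinary projection, $\cF_\bullet|_{\cX_k}=\cF_k$ is the constant sheaf $\underline{M}$ on $\cX_k$ for a fixed $R$-module $M$ (independent of $k$, since $\cF_\bullet$ is \emph{constant} as a sheaf on the simplicial scheme, so all the transition maps $\phi^*\cF_k\to\cF_{k'}$ are the identity on $\underline M$), and $\cG=\underline N$ on $Y$. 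By Lemma~\ref{lem:projectionisconstant} (via the projection formula, exactly as in the proof of Lemma~\ref{lem:projectionisconstant}), $Rp_{k,*}(\underline M\boxtimes\underline N)\isom \underline N\otimes R\Gamma(\cX_k,\underline M)$, and in particular $R^ip_{k,*}(\cF_k\boxtimes\cG)$ is the constant sheaf on $Y$ with value $R^i\Gamma(\cX_k,\underline M)\otimes N$ (when we are over a field, or more generally using the Tor spectral sequence as in Lemma~\ref{lem:projectionisconstant}, but in our applications $\cF_\bullet$ is a local system of vector spaces so $M$ is flat and there are no higher Tor terms).

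Next I would assemble these term-wise computations into the simplicial pushforward. Writing $\varepsilon\colon \cX_\bullet\times Y\to Y$ for the augmentation given by $p$, there is the standard first-quadrant spectral sequence of the simplicial object
\[
E_1^{k,i}=R^ip_{k,*}\bigl(\cF_k\boxtimes\cG\bigr)\ \Longrightarrow\ R^{k+i}p_*\bigl(\cF_\bullet\boxtimes\cG\bigr),
\]
with $d_1$ the alternating sum of the (pullbacks of the) face maps. By the previous paragraph each $E_1^{k,i}$ is a constant sheaf on $Y$, and the differentials $d_1$ are induced by the simplicial structure maps of $\cX_\bullet$ tensored with $\cG$; hence the whole $E_1$-page, together with all its differentials, is obtained by applying $(-)\otimes_R\underline N$ (and $\boxtimes$ with $Y$) to the corresponding complex of constant sheaves computing $R\Gamma(\cX_\bullet,\cF_\bullet)$. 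Since the category of constant sheaves on a connected $Y$ is equivalent (via global sections) to the category of $R$-modules, and this equivalence is exact, every subsequent page $E_r^{k,i}$ is again a constant sheaf, with value the corresponding term of the spectral sequence of $R$-modules computing $R\Gamma(\cX_\bullet,\cF_\bullet)$; and the abutment filtration on $R^{k+i}p_*(\cF_\bullet\boxtimes\cG)$ is a filtration by constant sub-sheaves with constant quotients. To conclude that the abutment itself is constant (not merely an iterated extension of constants) one uses that an extension of constant sheaves on a connected, locally simply connected $Y$ is constant — equivalently, $R\Gamma(\cX_\bullet,\cF_\bullet)\boxtimes\cG$ already has the right cohomology, and comparison of the two spectral sequences (the trivial one for $R\Gamma(\cX_\bullet,\cF_\bullet)\otimes N$ over a point, pulled back to $Y$, versus the one above) forces the identification $R^ip_*(\cF_\bullet\boxtimes\cG)\isom R^ip_*(\cF_\bullet)\otimes\cG$, functorially. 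In the flat (vector space) case this is cleanest: the projection formula gives $Rp_*(\cF_\bullet\boxtimes\cG)\isom R\Gamma(\cX_\bullet,\cF_\bullet)\otimes_R\cG$ on the nose in $\rD^b(Y,R)$, and taking $\HH^i$ finishes it.

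The main obstacle is the passage from "the associated graded of the abutment is constant" to "the abutment is constant": a priori an iterated extension of constant sheaves need only be locally constant (an extension class lives in $\HH^1(Y,\underline{\Hom})$, which can be nonzero). The way around it — and the step I would be most careful to write out — is not to argue extension-by-extension on $Y$ at all, but to produce the isomorphism $Rp_*(\cF_\bullet\boxtimes\cG)\isom R\Gamma(\cX_\bullet,\cF_\bullet)\otimes^{\mathbf L}_R \cG$ directly at the level of derived categories from the projection formula together with base change along $Y\to\Spec(\bbC)$ (resp.\ $\Spec(K)$ in the etale case), thereby identifying the whole complex, and only then extract the cohomology sheaves. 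Since $\cG$ is assumed \emph{constant}, the right-hand side is manifestly constant, and the flatness of $\cF_\bullet$ in the applications removes any derived-tensor subtleties; the general statement then follows from the Tor spectral sequence exactly as in the proof of Lemma~\ref{lem:projectionisconstant}. The analogous arguments in the etale and o-minimal settings are identical, using the corresponding projection formula and Remarks~\ref{rem:localsystemsetale} and the o-minimal analog.
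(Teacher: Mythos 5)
Your proposal ultimately lands on the same argument the paper intends: the one-line proof in the paper just says ``the proof of Lemma~\ref{lem:projectionisconstant} goes through,'' which is precisely the derived-level projection-formula computation $Rp_*(\cF_\bullet\boxtimes\cG)\cong R\Gamma(\cX_\bullet,\cF_\bullet)\otimes^{\mathbf L}_R\cG$ (via base change along $Y\to\mathrm{pt}$) followed, if needed, by the Tor spectral sequence. Your first attempt --- term-by-term application of Lemma~\ref{lem:projectionisconstant} plus the simplicial spectral sequence --- is instructive but, as you yourself note, runs into the extension problem and does not by itself prove constancy of the abutment, only that it is an iterated extension of constant sheaves. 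One correction to the way you phrase the detour: the parenthetical claim that ``an extension of constant sheaves on a connected, locally simply connected $Y$ is constant'' is false; for instance over $Y=S^1$ one has $\Ext^1_{\mathrm{Sh}(Y)}(\underline\bbZ,\underline\bbZ)\supseteq H^1(S^1,\bbZ)\neq 0$, and the nontrivial classes give unipotent non-constant local systems. You immediately acknowledge this in the next paragraph, and the fix you give (projection formula at the level of derived categories, then take $\HH^i$) is the right one and matches the paper --- so the proof is sound, but that sentence should be struck rather than left as an ``alternative'' argument.
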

\begin{proof}
The proof of Lemma \ref{lem:projectionisconstant} goes through in this setting.
\end{proof}

\begin{remark}\label{rem:projectionisconstantinetale}
We note that the previous result is also true in the \'{e}tale and topological setting (and in particular the o-minimal setting).
\end{remark}

\subsection{Cohomological Descent}
Consider an augmented simplicial topological space
$\varepsilon: \frX_{\bullet} \rightarrow S$. We shall assume that all our topological spaces are locally compact and Hausdorff. The morphism $\varepsilon$ is said to be a \emph{morphism of cohomological descent} if the adjunction map 
$$ \cF \rightarrow R\varepsilon_* \varepsilon^*\cF$$
is an isomorphism for all $\cF \in \rD^b(S,R)$ (i.e. the derived category abelian sheaves on $S$). We say that $\varepsilon$ is a morphism of universal cohomological descent if it is a morphism of cohomological descent after base change along any morphism $S' \rightarrow S$. The following well known theorem will be useful in the following

\begin{theorem}\label{thm:properdescent}
Let $\varepsilon: X \rightarrow S$ be a proper surjective map. Then the resulting morphism $\varepsilon: cosk(X/S) \rightarrow S$ is a morphism of universal cohomological descent.
\end{theorem}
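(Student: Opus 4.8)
The statement to prove is Theorem~\ref{thm:properdescent}: a proper surjective map $\varepsilon: X \to S$ of (locally compact Hausdorff) topological spaces gives rise to a morphism of universal cohomological descent $\varepsilon_\bullet: \cosk(X/S) \to S$.

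\textbf{Approach.} The plan is to reduce the statement to the known cohomological descent criterion for proper hypercoverings, via the standard fact that a single proper surjection is ``universally of cohomological descent'' because proper base change makes the $0$-coskeleton augmentation behave like a resolution. First I would recall the general criterion (due to Deligne, and in the topological setting treated e.g. in SGA4 Exp.\ Vbis or Conrad's notes on cohomological descent): an augmented simplicial space $\varepsilon_\bullet : \frX_\bullet \to S$ is of universal cohomological descent provided the augmentation, after any base change, induces an isomorphism on the $0$-th cohomology sheaf and surjections/isomorphisms in the appropriate range on higher sheaves — or, more efficiently, invoke the theorem that if $\varepsilon: X \to S$ is a ``morphism of universal cohomological descent'' in the weak sense (for the single map, meaning $\cF \to R\varepsilon_{0*}\varepsilon_0^*\cF$ has a certain connectivity) then $\cosk(X/S) \to S$ is a morphism of universal cohomological descent in the full simplicial sense. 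The substantive input is that \emph{proper surjections} have this property, which is exactly proper base change together with the surjectivity of $\varepsilon$ on points.

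\textbf{Key steps, in order.} (1) Reduce to universality: since $\cosk$ commutes with base change (fiber products in the category of topological spaces exist and $\cosk(X/S) \times_S S' \cong \cosk(X\times_S S' / S')$), and since $X \to S$ proper surjective pulls back to $X \times_S S' \to S'$ proper surjective, it suffices to prove that for \emph{any} proper surjective $\varepsilon: X \to S$ the adjunction $\cF \to R\varepsilon_{\bullet *}\varepsilon_\bullet^* \cF$ is an isomorphism for all $\cF \in \rD^b(S,R)$. (2) Compute $R\varepsilon_{\bullet *}\varepsilon_\bullet^*\cF$ via the spectral sequence of the simplicial space: its $E_1$-page is $E_1^{p,q} = R^q\varepsilon_{p*}\varepsilon_p^*\cF$ where $\varepsilon_p: X\times_S\cdots\times_S X \to S$ is the $(p+1)$-fold fiber product projection. (3) Identify the $E_1$-page using proper base change: for each point $s \in S$ the stalk $(R^q\varepsilon_{p*}\varepsilon_p^*\cF)_s \cong \HH^q(X_s^{p+1}, \cF_s|_{X_s^{p+1}})$, where $X_s^{p+1} = X_s \times \cdots \times X_s$ is the $(p+1)$-fold product of the fiber $X_s$ over $s$, and $\cF_s|$ is the constant sheaf with stalk $\cF_s$; this uses that $\varepsilon$ is proper so proper base change applies, and that $\varepsilon$ is surjective so $X_s \neq \emptyset$. (4) Observe that the simplicial space $X_s^{\bullet+1} = \cosk(X_s/\mathrm{pt})$ is the \v{C}ech nerve of the map $X_s \to \mathrm{pt}$, which (since $X_s$ is nonempty) is contractible as a simplicial set — more precisely $|X_s^{\bullet+1}|$ is weakly contractible, or directly: the augmented cosimplicial complex $\cF_s \to \HH^0(X_s,\cF_s) \to \HH^0(X_s^2,\cF_s) \to \cdots$ (taking $\HH^0$ of the constant sheaf, i.e. locally constant functions) computes $R\Gamma$ of the constant sheaf on the geometric realization and is a resolution of $\cF_s$; equivalently the alternating-sum differentials give an exact complex because $\HH^0(X_s^{p},\cF_s) = \cF_s^{\pi_0(X_s)^p}$... — so the $E_1$-spectral sequence degenerates to give exactly $\cF_s$ in total degree $0$ and nothing else, stalkwise, proving the adjunction is an isomorphism on stalks hence an isomorphism.

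\textbf{Main obstacle.} The delicate point is Step (4): showing that the \v{C}ech-type complex associated to the $0$-coskeleton of a \emph{surjection} (as opposed to a map admitting a section) is exact — when there is no section one cannot write down a contracting homotopy directly on the nose, but one gets one \emph{locally} (or pro-locally / stalkwise), and the point is that cohomology is computed locally. Concretely, for a point $s\in S$ one must handle that $X_s$ may be complicated, but since we only need the constant sheaf $\cF_s$ on $X_s^{p+1}$ and are taking global sections (= $R^0$) combined with the vanishing of higher $R^q\varepsilon_{p*}$ on stalks (again proper base change: $R^q$ is $\HH^q$ of a constant sheaf on $X_s^{p+1}$, which need not vanish!) — so in fact one cannot simply truncate. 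The honest route, which I would follow, is: don't try to make $E_1$ concentrated in one row; instead, observe that proper base change lets us reduce the whole isomorphism $\cF \to R\varepsilon_{\bullet *}\varepsilon_\bullet^*\cF$ to the case $S = \mathrm{pt}$ stalkwise, where it becomes the statement that for a nonempty locally compact Hausdorff space $X$, the augmented simplicial space $\cosk(X/\mathrm{pt}) \to \mathrm{pt}$ is of cohomological descent — and this is the classical statement that the \v{C}ech nerve of $X \to \mathrm{pt}$ has the cohomology of a point, which holds because $|X^{\bullet+1}|$ is contractible whenever $X \neq \emptyset$ (the nerve of $X \rightrightarrows X \to \ast$ deformation retracts). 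I would cite SGA4 Vbis, Expos\'e by Saint-Donat, or Conrad's ``Cohomological descent'' notes (Theorem establishing that proper surjections are of universal cohomological descent) for this, since the excerpt explicitly calls it ``well known.'' The proof in the paper will therefore likely be a two-line reference to these sources after noting the base-change compatibility of $\cosk$; I would write it that way, spelling out the proper-base-change reduction to the case of a point and then invoking contractibility of the \v{C}ech nerve.

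\begin{proof}[Proof sketch]
Since $\cosk(-/-)$ is compatible with base change and a proper surjection remains proper surjective after any base change $S'\to S$, it suffices to show that for every proper surjective $\varepsilon\colon X\to S$ and every $\cF\in\rD^b(S,R)$ the adjunction $\cF\to R\varepsilon_{\bullet*}\varepsilon_\bullet^*\cF$ is an isomorphism, where $\varepsilon_\bullet\colon\cosk(X/S)\to S$. This can be checked on stalks. Let $s\in S$ and let $\varepsilon_p\colon X\times_S\cdots\times_S X\to S$ ($p+1$ factors) denote the structure maps of the simplicial space. By proper base change, $(R^q\varepsilon_{p*}\varepsilon_p^*\cF)_s\cong\HH^q(X_s^{p+1},\,\underline{\cF_s})$, the cohomology of the constant sheaf with stalk $\cF_s$ on the $(p+1)$-fold product of the fiber $X_s=\varepsilon^{-1}(s)$. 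These groups assemble, as $p$ varies, into the $E_1$-page of the spectral sequence computing $(R\varepsilon_{\bullet*}\varepsilon_\bullet^*\cF)_s$, which is thus $R\Gamma$ of the constant sheaf $\underline{\cF_s}$ on the geometric realization of $\cosk(X_s/\mathrm{pt})$, the \v{C}ech nerve of $X_s\to\mathrm{pt}$. Since $\varepsilon$ is surjective, $X_s\neq\emptyset$, so this simplicial space has weakly contractible realization; hence its constant-sheaf cohomology is $\cF_s$ in degree $0$ and vanishes otherwise, and this identification is compatible with the augmentation. Therefore $\cF\to R\varepsilon_{\bullet*}\varepsilon_\bullet^*\cF$ is a stalkwise, hence global, isomorphism. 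Compatibility with base change being built into the argument, $\varepsilon_\bullet$ is a morphism of universal cohomological descent. (See \cite{Deligne-4.5}, SGA4 Expos\'e Vbis, for the contractibility of the \v{C}ech nerve and the standard treatment of proper descent.)
\end{proof}
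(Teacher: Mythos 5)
The paper does not prove this theorem: it is stated as ``well known'' immediately before Remark~\ref{rem:etalecohomdescent}, with the implicit reference being the standard literature on cohomological descent (SGA4 Vbis, Conrad's notes), exactly as you anticipated. So there is no in-paper argument to compare against; your task was effectively to reconstruct the classical proof, and your sketch does that essentially correctly.

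Two small remarks on presentation. First, in the final paragraph before your proof sketch you oscillate between trying to make the $E_1$-page concentrated in a single row (correctly noting this fails, since $\HH^q(X_s^{p+1},\underline{\cF_s})$ need not vanish for $q>0$) and then appealing to contractibility of the fat geometric realization. The cleanest route, which you gesture at but do not quite commit to, is to skip the realization entirely: since $X_s\neq\emptyset$, choose a point $x_0\in X_s$; the resulting section of $X_s\to\mathrm{pt}$ equips the augmented simplicial space $\cosk(X_s/\mathrm{pt})\to\mathrm{pt}$ with an extra degeneracy $s_{-1}\colon X_s^{p}\to X_s^{p+1}$, $(x_1,\ldots,x_p)\mapsto(x_0,x_1,\ldots,x_p)$. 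Applying $\HH^q(-,\underline{\cF_s})$ gives, for each fixed $q$, an augmented cosimplicial $R$-module with an extra codegeneracy, so the associated cochain complex (the $q$-th row of the $E_1$-page) is contractible onto $\HH^q(\mathrm{pt},\underline{\cF_s})$. Hence $E_2^{p,q}=0$ except $E_2^{0,0}=\cF_s$, the spectral sequence degenerates, and the stalk of $R\varepsilon_{\bullet*}\varepsilon_\bullet^*\cF$ is $\cF_s$. This avoids needing to know that simplicial-sheaf cohomology agrees with the cohomology of the realization. Second, when you invoke proper base change at each simplicial level, you should say in one line why each $\varepsilon_p\colon X\times_S\cdots\times_S X\to S$ is proper (closure of properness under base change and composition), and that $\varepsilon_p^*\cF$ restricted to the fiber over $s$ is the constant complex $\underline{\cF_s}$ because it is pulled back through the constant map to $s$. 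These are the only load-bearing facts in the stalkwise reduction, and once they are in place the argument is complete and correct.
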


\begin{remark}\label{rem:etalecohomdescent}
\begin{enumerate}
\item If $f: X \rightarrow S$ is a proper surjective morphism of schemes over $\bbC$, then the induced map on complex analytic spaces is a proper surjective map of topological spaces. Moreover, the underlying analytic space of a scheme is locally compact and Hausdorff (in the complex topology).
\item One can also define morphisms of (universal) cohomological descent in the setting of sheaves in the \'{e}tale topology on a scheme in the same manner. The analog of Theorem~\ref{thm:properdescent} is also true in the setting of schemes and the \'{e}tale topology. In particular, if $\varepsilon: X \rightarrow S$ is a proper surjective map of schemes, then the resulting morphism $\varepsilon: cosk(X/S) \rightarrow S$  is a morphism of universal cohomological descent. Here we consider the adjunction morphism in the derived category of sheaves in the \'{e}tale topology on $S$.
\item Note that the underlying topological space of a definable space in an o-minimal structure is locally compact and Hausdorff. Moreover, a proper map in the category of definable spaces is proper as a map of topological space. In particular, the previous Theorem is also applicable in that setting.
\end{enumerate}
\end{remark}
 
\subsection{Properties of correspondences}
\label{subsec:properties:correpondence}
Given two correspondences $[D;A,B]$ and $[E;B,C]$ we may {\it compose} the correspondences to obtain a correspondence $[D \times_B E;A,C]$. Note that in this case one has a natural commutative diagram:
$$
\xymatrix{
  & & \ar[dr] \ar[dl]D \times_B E & & \\
  & D \ar[dr] \ar[dl] &   & E \ar[dr] \ar[dl]  & \\
  A  & & B & & C .
}
$$
\begin{remark}
Note that the composition of two finite correspondences is again a finite correspondence. In order to see this, note that 
$D \times_B E \isom (D \times C) \times_{A \times B \times C} (A \times E)$. On the other hand, $D \times C \rightarrow A \times B \times C$ (respectively $A \times E \rightarrow A \times B \times C$) is finite.
\end{remark}

Given two schemes $A,B$, the identity correspondence is by definition the correspondence $[A \times B;A,B]$ (with $A \times B \rightarrow A$ and $A \times B \rightarrow B$ the natural projection maps). Note that the composition $[A \times B;A,B]$ and $[E;B,C]$ is the correspondence
$$
\xymatrix{
 &  A \times E    \ar[dr] \ar[dl]& \\  
A &   & C
}
$$
where the arrow on the left is the natural projection map, and the arrow on the right is projection to $E$ followed by the correspondence map $\pi_C: E \rightarrow C$. 

Given a closed subscheme $B_0 \hookrightarrow B$, we denote by $[D_{B_0}; A, B_0]$ the resulting `base-changed' correspondence. In particular, $D_{B_0}:= D \times_B B_0$ with the natural induced maps to $A$ and $B_0$. The morphism $D_{B_0} \rightarrow B_0$ is the projection map, and the morphism $D_{B_0} \rightarrow A$ is the composite of the projection to $D$ and the given morphism $D \rightarrow A$. Since $D_{B_0} = D \times_{A \times B_0} A \times B$, it follows that this is a finite correspondence. Similarly, if $A_0 \hookrightarrow A$ is a closed subscheme, then we denote by $[\leftindex_{A_0}{D}; A_0,B]$ the resulting base change to $A_0$. Finally, we also consider the base changes: $[\leftindex_{A_0}{D}_{B_0}; A_0,B_0]$. In the following, we shall only be concerned with base change along closed (reduced) subschemes of dimension 0. In particular, our $A_0$ and $B_0$ will be a finite set of disjoint points. 

Let $[D;A,B]$ and $[E;B,C]$ be a finite correspondences, and consider $0$-dimensional closed (reduced) subschemes $C_0 \subset C$ and $A_0 \subset A$ as above. In this setting, we may compose the given correspondences and base change the resulting correspondence to obtain a correspondence: $[\leftindex_{A_0}{D\times_B E}_{C_0}; A_0,C_0]$. Note that this correspondence may also be obtained by base changing first, and then composing the resulting correspondences. Recall there is a natural structure morphism $D \times_B E \rightarrow B$, and therefore one has a natural morphism $\leftindex_{A_0}{D\times_B E}_{C_0} \rightarrow B$. 

\begin{lemma}\label{lem:corrcohomdescent}
 With notation and hypotheses as above, the natural morphism 
 \[
 \leftindex_{A_0}{D\times_B E}_{C_0} \rightarrow B
 \]
 is proper. In particular, if $I$ denotes the scheme theoretic image, then the resulting map 
 $$cosk(\leftindex_{A_0}{D\times_B E}_{C_0}/I) \rightarrow I$$ is a morphism of cohomological descent in the \'{e}tale topology, and on the underlying complex analytic spaces.
 \end{lemma}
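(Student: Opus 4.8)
The plan is to prove properness first and then invoke Theorem~\ref{thm:properdescent} to get cohomological descent. For properness, recall that $\leftindex_{A_0}{D\times_B E}_{C_0}$ is obtained from the composed correspondence $[D\times_B E; A, C]$ by base change along the closed immersions $A_0 \hookrightarrow A$ and $C_0 \hookrightarrow C$, and by the Remark following Definition~\ref{def:correspondence} the composition of two finite correspondences is again finite, so the structure map $D \times_B E \to A \times C$ is finite, hence proper. The scheme $\leftindex_{A_0}{D\times_B E}_{C_0}$ is then (up to canonical isomorphism) the fiber product of $D \times_B E$ with $A_0 \times C_0$ over $A \times C$, so the map $\leftindex_{A_0}{D\times_B E}_{C_0} \to A_0 \times C_0$ is finite as a base change of a finite map, hence proper. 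Now the natural map $\leftindex_{A_0}{D\times_B E}_{C_0} \to B$ factors as $\leftindex_{A_0}{D\times_B E}_{C_0} \to D \times_B E \xrightarrow{\;\mathrm{str}\;} B$, and the structure morphism $D \times_B E \to B$ is proper because it is a composite $D\times_B E \to E \to B$ where $E \to B$ is finite (hence proper, being part of the finite correspondence $[E;B,C]$) and $D\times_B E \to E$ is a base change of the finite map $D \to B$; alternatively, since $A_0, C_0$ are $0$-dimensional the map $\leftindex_{A_0}{D\times_B E}_{C_0} \to D\times_B E$ is finite and the composite inherits properness. In any case $\leftindex_{A_0}{D\times_B E}_{C_0} \to B$ is a composite of proper morphisms, hence proper.

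Next I would replace $B$ by the scheme-theoretic image $I$ of this proper morphism. Since $\leftindex_{A_0}{D\times_B E}_{C_0} \to B$ is proper, its set-theoretic image is closed, and endowing it with the scheme-theoretic image structure $I \hookrightarrow B$ we obtain a factorization $\leftindex_{A_0}{D\times_B E}_{C_0} \to I \hookrightarrow B$ in which the first map is proper (properness of $\leftindex_{A_0}{D\times_B E}_{C_0}\to B$ together with separatedness of $I\to B$ forces $\leftindex_{A_0}{D\times_B E}_{C_0}\to I$ to be proper) and surjective (the scheme-theoretic image of a proper, hence quasi-compact, morphism has the set-theoretic image as its underlying space). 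Thus $\varepsilon: \leftindex_{A_0}{D\times_B E}_{C_0} \to I$ is proper and surjective.

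Finally, apply Theorem~\ref{thm:properdescent}: for a proper surjective map $\varepsilon: X \to S$ of schemes, the augmented simplicial scheme $\cosk(X/S) \to S$ is a morphism of universal cohomological descent. By Remark~\ref{rem:etalecohomdescent}(1)--(2), this holds both for $\ell$-adic (finite coefficient) sheaves on the étale site and, passing to the associated complex analytic spaces, for sheaves in the complex analytic topology, since a proper surjective map of $\bbC$-schemes induces a proper surjective map of locally compact Hausdorff topological spaces on analytifications. Taking $X = \leftindex_{A_0}{D\times_B E}_{C_0}$ and $S = I$ yields that $\cosk(\leftindex_{A_0}{D\times_B E}_{C_0}/I) \to I$ is a morphism of cohomological descent in both the étale and the complex analytic settings, which is the claim.

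The only point requiring a little care—and the step I would expect to be the mild obstacle—is the bookkeeping that the various base-changed and composed correspondences agree up to canonical isomorphism (so that $\leftindex_{A_0}{D\times_B E}_{C_0}$ really is $(D\times_B E)\times_{A\times C}(A_0\times C_0)$ and the map to $B$ is the one induced by the structure morphism of the composite), which was already noted in the discussion preceding the lemma; once that identification is in hand, properness is a formal consequence of stability of proper (indeed finite) morphisms under composition and base change, and the descent statement is immediate from Theorem~\ref{thm:properdescent}.
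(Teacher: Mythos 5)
Your factorization of $\leftindex_{A_0}{D\times_B E}_{C_0} \to B$ through the structure map $D\times_B E \to B$ has a gap: $D\times_B E \to B$ is \emph{not} proper in general. You claim ``$E\to B$ is finite, hence proper, being part of the finite correspondence $[E;B,C]$'', but the definition of finite correspondence says only that $E\to B\times C$ is finite; the projection $E\to B$ fails to be finite (or even proper) unless $C$ is finite, since $B\times C \to B$ is not proper when $C$ is positive-dimensional. For the same reason $D\to B$ is not proper. Your ``alternatively'' clause still relies on properness of $D\times_B E\to B$, so it does not rescue the argument.

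The correct approach, which the paper takes, is to base-change \emph{first}: since composition and base change of correspondences commute (as noted in the surrounding text), one may assume $A = A_0$ and $C = C_0$ are zero-dimensional. After this reduction $D\subset A_0\times B$ with $A_0\times B \to B$ finite (because $A_0$ is finite), so $D\to B$ is finite, and similarly $E\to B$ is finite; hence $D\times_B E \to B$ is finite, hence proper. Interestingly, your own first observation --- that $\leftindex_{A_0}{D\times_B E}_{C_0}\to A_0\times C_0$ is finite --- already contains what you need: a scheme finite over the zero-dimensional scheme $A_0\times C_0$ is itself a finite $K$-scheme, and any morphism from a finite $K$-scheme to a finite-type $K$-scheme is finite, hence proper. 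Had you drawn that conclusion directly instead of pivoting to the flawed factorization through $D\times_B E$, the properness claim would be established; the remainder of your argument (passing to the scheme-theoretic image and invoking Theorem~\ref{thm:properdescent} and Remark~\ref{rem:etalecohomdescent}) is fine and matches the paper.
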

 \begin{proof}
By the remarks above, we may assume that $A = A_0$ and $C =C_0$ are zero dimensional i.e. a collection of closed points. In this case, the morphisms $B \times C_0 \rightarrow B$ and $A_0 \times B \rightarrow B$ are both finite. It follows that $D \rightarrow B$ and $E \rightarrow B$ are both finite morphisms, and therefore $D \times_B E \rightarrow B$ is a finite morphism. In particular, it is a proper morphism. The result now follows from \ref{thm:properdescent} and \ref{rem:etalecohomdescent}.
 \end{proof}   

Since the image of a proper morphism is closed, the schematic image $I$ in the lemma above is the set theoretic image (considered as a closed subscheme with the usual induced reduced structure on the corresponding closed subset).

\begin{remark}\label{rem:corrcohomdefinabledescent}
In the setting of definable spaces, we shall simply work with closed subspaces i.e. take correspondences where $D$ is a closed definable subspace of $A \times B$. In this case, the analog of the previous lemma is also true.
\end{remark}

\subsection{Spectral sequence associated to a simplicial sheaf}
\label{subsec:spectral}
In this section, we recall some standard spectral sequences associated with simplicial spaces and sheaves, and some of their standard properties. We also prove a key result on the behavior of kernels between a morphism of such spectral sequences.

Given a simplicial space $\cX_{\bullet}$, and an abelian sheaf $\cF_{\bullet}$, one has a $1$-st quadrant cohomological spectral sequence:
$$
\rE_1^{j,i} = \rH^{i}(X_j,\cF_j) \Rightarrow \rH^{n}(\cX_{\bullet},\cF_{\bullet}).
$$
We shall denote this spectral sequence by $\rE(\cX_{\bullet},\cF_{\bullet})$. We note that if $\cF_{\bullet}$ is a sheaf of $R$-modules, then the cohomology groups above are $R$-modules, and the spectral sequence is a spectral sequence of $R$-modules.

One also has a `local' version of this spectral sequence. Let $a: \cX_{\bullet} \rightarrow S$ be an augmented simplicial space, and $\cF_{\bullet}$ a sheaf on $\cX_{\bullet}$. Let $a_j: X_j \rightarrow S$ denote the resulting structure maps. In this setting, we have a $1$-st quadrant cohomological spectral sequence:
$$
\rE_1^{j,i} = \rR^{i}a_{j,*}\cF_j \Rightarrow \rR^{n}a_*\cF_{\bullet}.
$$
We shall denote this spectral sequence by 
$\rE(\cX_{\bullet}/S,\cF_{\bullet})$. Moreover, we will denote by $\rE_r^{j,i}(\cF_{\bullet})$ the $\rE_{r}^{j,i}$ terms of this spectral sequence (where we allow $r = \infty$). We note that if $\cF_{\bullet}$ is a constructible sheaf of $R$-modules, then this is a spectral sequence of constructible sheaves of $R$-modules. 

\begin{remark}

\begin{enumerate}
\item If we take $a: \cX_{\bullet} \rightarrow *$ to be the canonical structure map to the point (i.e. the morphism to $\Spec(K)$ or $\Spec(\bbC)$), then $\rE(\cX_{\bullet}/*,\cF_{\bullet})$ is the spectral sequence $\rE(\cX_{\bullet},\cF_{\bullet}).$
\item The spectral sequences above are compatible under base change $S' \rightarrow S$ if $a_p$ is a proper morphism. This is a consequence of the proper base change which is true in all of our settings: constructible sheaves on complex analytic spaces, constructible sheaves in the \'{e}tale topology, and constructible sheaves on definable spaces. This implies in particular, that the `fibers' over a point $s \in S$ of the local spectral sequence is given by the former `global' spectral sequence. 
\end{enumerate}
\end{remark}

The following lemma will be useful in what follows.

\begin{lemma}\label{lem:Sconsttosubobject}
Let $f: \cA \rightarrow \cB$ be a morphism of constructible sheaves on $X$, where $\cA$ is $\cS$-constructible and $\cB$ is a subobject of an $\cS$-constructible sheaf. Then $\ker(f)$ is $\cS$-constructible. Similarly, the image of $f$ is an $\cS$-constructible sheaf.
\end{lemma}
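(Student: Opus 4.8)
The plan is to reduce immediately to the case of a morphism \emph{between} two genuinely $\cS$-constructible sheaves by composing with the given monomorphism, and then to invoke fact (1) of Section~\ref{subsubsec:constructible}: the full subcategory $Sh_{\cS}(X,R)$ is an abelian subcategory of $Sh(X,R)$, so the inclusion is exact and kernels and images of morphisms in $Sh_{\cS}(X,R)$ — computed in $Sh(X,R)$ — are again $\cS$-constructible. The only point worth keeping straight is that $\cB$ itself need not be $\cS$-constructible (it is a priori constructible with respect to some finer stratification), so one cannot apply exactness directly to $f$; the monomorphism into an $\cS$-constructible overobject is exactly what repairs this.

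In detail: fix a monomorphism $\iota\colon \cB \hookrightarrow \cB'$ with $\cB'$ an $\cS$-constructible sheaf, and set $g := \iota\circ f\colon \cA \to \cB'$. Since $\cA$ and $\cB'$ lie in $Sh_{\cS}(X,R)$, so do $\ker(g)$ and $\im(g)$. Because $\iota$ is a monomorphism, the canonical comparison map $\ker(f)\to\ker(g)$ is an isomorphism, whence $\ker(f)$ is $\cS$-constructible. For the image, factor $f = m_f\circ e_f$ with $e_f\colon\cA\twoheadrightarrow\im(f)$ epi and $m_f\colon\im(f)\hookrightarrow\cB$ mono; then $g = (\iota\circ m_f)\circ e_f$ is an epi–mono factorization of $g$, so by uniqueness of such factorizations $\im(f)\cong\im(g)$ as objects of $Sh(X,R)$. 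Since $\cS$-constructibility of a sheaf on $X$ (locally constant restriction to each stratum, finitely generated stalks) is invariant under isomorphism, $\im(f)$ is $\cS$-constructible as well.

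I do not expect any real obstacle here; the argument is formal abelian-category manipulation resting on the single input that $Sh_{\cS}(X,R)\hookrightarrow Sh(X,R)$ is exact. I would also note in passing that the same proof applies verbatim in the étale and o-minimal settings, since there too the $\cS$-constructible sheaves form an abelian subcategory (as recalled in Sections~\ref{subsubsec:o-minimalsheaves} and the étale discussion), and that in our applications this lemma will be used with $\cA$ an $\rE_1$- or $\rE_r$-term of the spectral sequence $\rE(\cX_\bullet/S,\cF_\bullet)$ — which is $\cS$-constructible by construction — mapping into a cycle or boundary subobject sitting inside another such $\cS$-constructible term.
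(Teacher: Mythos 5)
Your proof is correct and is essentially the paper's own argument: compose $f$ with the given monomorphism into an $\cS$-constructible sheaf, observe that this does not change the kernel (and, by uniqueness of the epi–mono factorization, identifies the image), and then invoke that $Sh_{\cS}(X,R)$ is an abelian subcategory of $Sh(X,R)$. You spell out the image step a bit more explicitly than the paper, which simply says "the same argument applies," but the route is the same.
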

\begin{proof}
Let $\cH$ be an $\cS$-constructible sheaf, and $i: \cB \hookrightarrow \cH$ a monomorphism. Then $\ker(f) = \ker(i \circ f)$. The result follows since the category of $\cS$-constructible sheaves is an abelian category. The same argument can also be applied to the image.
\end{proof}

\begin{lemma}\label{lem:Sconstruct1}
Let $a: \cX_{\bullet} \rightarrow S$ be an augmented simplicial space, and $\cF_{\bullet}$ a constructible sheaf on $\cX_{\bullet}$. Suppose that $R^ia_{j,*}\cF_{j}$ is $\cS$-constructible for all $j \leq k$. Then the the following holds:
\begin{enumerate}
\item $\rE_{r}^{j,i}(\cF_{\bullet})$ is $\cS$-constructible for all $j \leq k -r +1$;
\item $\rE_r^{j,i}(\cF_{\bullet})$ is a subobject of an $\cS$-constructible sheaf for all $k-r+1 < j \leq k$.
\end{enumerate}
\end{lemma}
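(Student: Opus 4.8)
The plan is to prove both parts simultaneously by induction on $r \ge 1$. The base case $r=1$ is immediate from the hypothesis: $\rE_1^{j,i}(\cF_\bullet) = R^i a_{j,*}\cF_j$ is $\cS$-constructible for all $j \le k$, which is exactly the range $j \le k-r+1$ with $r=1$ demanded in (1), while the range $k-r+1 < j \le k$ in (2) is empty. For the inductive step I would use the description of the $(r+1)$-st page as the cohomology of the $d_r$-complex, writing $\rE_{r+1}^{j,i}(\cF_\bullet) = Z/B$ with $Z = \ker\big(d_r\colon \rE_r^{j,i} \to \rE_r^{j+r,i-r+1}\big)$ and $B = \operatorname{im}\big(d_r\colon \rE_r^{j-r,i+r-1} \to \rE_r^{j,i}\big)$, both regarded as subsheaves of $\rE_r^{j,i}(\cF_\bullet)$ with $B \subseteq Z$ (a term whose first index is negative being zero; throughout, $\rE_r^{\bullet,\bullet}$ abbreviates $\rE_r^{\bullet,\bullet}(\cF_\bullet)$).

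Before the case analysis I would record two formal inputs. First, $Sh_{\cS}(X,R)$ is an abelian subcategory of $Sh(X,R)$, hence closed under passing to subsheaves, quotients and images formed inside $\cS$-constructible sheaves; in particular, if $M \hookrightarrow \cH$ with $\cH$ being $\cS$-constructible and $B \subseteq M$ being $\cS$-constructible, then $M/B \hookrightarrow \cH/B$ exhibits $M/B$ as a subobject of the $\cS$-constructible sheaf $\cH/B$. Second, Lemma~\ref{lem:Sconsttosubobject}: if a morphism has $\cS$-constructible source and a target that is a subobject of an $\cS$-constructible sheaf, then both its kernel and its image are $\cS$-constructible. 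With these in hand, everything reduces to bookkeeping with the bidegrees of $d_r$.

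For (1), I would fix $j \le k-r$. Then $j \le k-r+1$ and $j-r \le k-2r \le k-r+1$, so $\rE_r^{j,i}$ and $\rE_r^{j-r,i+r-1}$ are $\cS$-constructible by the inductive hypothesis, while $j+r \le k$ makes $\rE_r^{j+r,i-r+1}$ at least a subobject of an $\cS$-constructible sheaf. Applying Lemma~\ref{lem:Sconsttosubobject} to $d_r\colon \rE_r^{j,i}\to\rE_r^{j+r,i-r+1}$ shows $Z$ is $\cS$-constructible, and applying it to $d_r\colon \rE_r^{j-r,i+r-1}\to\rE_r^{j,i}$ shows $B$ is $\cS$-constructible; hence $Z/B = \rE_{r+1}^{j,i}(\cF_\bullet)$ is $\cS$-constructible. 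For (2), I would fix $k-r < j \le k$. Now $\rE_r^{j,i}$ is only guaranteed to be a subobject of some $\cS$-constructible $\cH$, but the incoming column satisfies $j-r \le k-r \le k-r+1$, so $\rE_r^{j-r,i+r-1}$ is still $\cS$-constructible and Lemma~\ref{lem:Sconsttosubobject} makes $B$ $\cS$-constructible. Then $\rE_{r+1}^{j,i}(\cF_\bullet) = Z/B \hookrightarrow \rE_r^{j,i}/B \hookrightarrow \cH/B$, and $\cH/B$ is $\cS$-constructible, so $\rE_{r+1}^{j,i}(\cF_\bullet)$ is a subobject of an $\cS$-constructible sheaf. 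This closes the induction.

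I expect the only genuine subtlety to be the part (2) bookkeeping: the naive hope would be to prove $\rE_r^{j,i}$ itself is $\cS$-constructible in that range, which fails in general, so instead one must notice that only $\operatorname{im}(d_r)$ needs to be $\cS$-constructible — and this works precisely because the relevant incoming differential originates in column $j-r$, which the constraint $j \le k$ keeps inside the range where the inductive hypothesis yields honest $\cS$-constructibility — after which the ``subobject of an $\cS$-constructible sheaf'' property is preserved by quotienting by that $\cS$-constructible image. All remaining steps are routine consequences of the fact that $\cS$-constructible sheaves form an abelian subcategory.
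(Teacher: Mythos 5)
Your proof is correct and follows essentially the same induction on $r$ as the paper, with the same case split on the column index and the same use of Lemma~\ref{lem:Sconsttosubobject}; the only difference is presentational, in that you explicitly spell out the formal point (that a quotient $Z/B$ with $B$ $\cS$-constructible and $Z$ a subobject of some $\cS$-constructible $\cH$ embeds into $\cH/B$) which the paper's proof leaves implicit in the final step of its part (2) argument.
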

\begin{proof}
We prove this by induction on $r$. If $r =1$, then the assertion holds given our assumptions. In general, $\rE_{r+1}^{j,i}(\cF_{\bullet})$ is the homology of the complex:
$$\rE_r^{j-r,i+r-1}(\cF_{\bullet}) \rightarrow \rE_r^{j,i}(\cF_{\bullet}) \rightarrow \rE_r^{j+r,i-r+1}(\cF_{\bullet}).$$
Suppose $j < k-r+1$. Then $j +r \leq k$ and by the induction hypothesis, it follows that the first two terms are $\cS$-constructible and the last term is a subobject of an $\cS$-constructible sheaf. By Lemma \ref{lem:Sconsttosubobject}, the kernel of the second arrow is $\cS$-constructible. Since the image of the first arrow is $\cS$-constructible, it follows that the corresponding homology is $\cS$-constructible. Suppose now that $k - r+1 \leq j \leq k$. In this case, $j - r \leq k-r$. It follows that the left-most term is $\cS$-constructible. The middle term is a sub-object of an $\cS$-constructible sheaf. In particular, the kernel of the right arrow is a subobject of an $\cS$-constructible sheaf. Moreover, by Lemma \ref{lem:Sconsttosubobject}, the image of the left arrow is $\cS$-constructible. It follows that the homology is a subobject of an $\cS$-constructible sheaf.
 
\end{proof}

\begin{corollary}\label{cor:abutmentconstant}
With notation as above, if $R^ia_{j*}\cF_j$ is $\cS$-constructible for all $i,j$, then so are $\rE_{r}^{j,i}$ for all $r$ (including $r = \infty$) and the abutment $R^ia_{\bullet,*}\cF_{\bullet}$. The analogous assertion also holds if all the sheaves are constant sheaves.
\end{corollary}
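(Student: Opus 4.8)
The plan is to deduce everything from Lemma~\ref{lem:Sconstruct1} by letting $r$ grow past the truncation level one cares about, so that the second (``subobject of an $\cS$-constructible sheaf'') alternative never arises. Fix a bidegree $(j,i)$. Apply Lemma~\ref{lem:Sconstruct1} with $k$ chosen large enough that $j \le k$; since the hypothesis now is that $R^i a_{j',*}\cF_{j'}$ is $\cS$-constructible for \emph{all} $j'$ (not just $j' \le k$), the hypothesis of the lemma is satisfied for every $k$. Then clause~(1) of the lemma asserts that $\rE_r^{j,i}(\cF_\bullet)$ is $\cS$-constructible for all $j \le k - r + 1$, i.e.\ for all $r \le k - j + 1$. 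Since $k$ is arbitrary, we may take $k \ge r + j - 1$ for any prescribed $r$, and conclude that $\rE_r^{j,i}(\cF_\bullet)$ is $\cS$-constructible for every finite $r$.

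For the case $r = \infty$, recall that in a first-quadrant spectral sequence the terms stabilize: for each fixed $(j,i)$ there is an $r_0$ (depending on $j,i$) with $\rE_r^{j,i} = \rE_\infty^{j,i}$ for all $r \ge r_0$ — the incoming and outgoing differentials at $\rE_r^{j,i}$ vanish for $r$ large because their sources/targets lie outside the first quadrant. Hence $\rE_\infty^{j,i}(\cF_\bullet) = \rE_{r_0}^{j,i}(\cF_\bullet)$ is $\cS$-constructible by the previous paragraph. Finally, the abutment $R^n a_{\bullet,*}\cF_\bullet$ carries a finite filtration whose successive quotients are the $\rE_\infty^{j,i}$ with $j + i = n$; since these are finitely many $\cS$-constructible sheaves and $Sh_{\cS}(X,R)$ is an abelian category (stable under extensions inside $Sh(X,R)$), an easy induction on the length of the filtration shows $R^n a_{\bullet,*}\cF_\bullet$ is $\cS$-constructible. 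The parenthetical variant — that the same conclusions hold with ``constant sheaf'' in place of ``$\cS$-constructible sheaf'' — follows by the identical argument, using that constant sheaves on a connected base form a full abelian subcategory of $Sh(X,R)$ closed under subobjects (Remark~\ref{rem:localsystemanalytic}(3)) and extensions, which are exactly the closure properties invoked above; here one should assume $X$ connected, or argue componentwise.

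I do not expect a serious obstacle: this is a bookkeeping corollary of Lemma~\ref{lem:Sconstruct1}. The one point requiring a little care is the passage to $r = \infty$ and then to the abutment — specifically, making sure the filtration quotients are genuinely the $\rE_\infty$ terms and that $\cS$-constructibility (resp.\ constancy) is preserved under the finitely many extensions assembling the abutment. Both are immediate from the fact that the relevant subcategories of $Sh(X,R)$ are abelian (stated in \S\ref{subsubsec:constructible}(1) and Remark~\ref{rem:localsystemanalytic}), so no new input is needed.
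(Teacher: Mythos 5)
Your proof takes exactly the same route as the paper: invoke Lemma~\ref{lem:Sconstruct1} (with $k$ arbitrarily large, so that clause (2) never triggers) to get $\cS$-constructibility of all $\rE_r^{j,i}$, observe first-quadrant stabilization for $r=\infty$, and pass to the abutment through its finite filtration using closure under extensions. The $\cS$-constructible half of your argument is sound; the paper's appeal to the stacks-project lemma on locally constant sheaves forming a weak Serre subcategory is what justifies closure under extensions there.

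One point deserves a flag, and it applies equally to the paper's own proof: the category of \emph{constant} sheaves is closed under kernels and cokernels (so the passage from $\rE_r$ to $\rE_{r+1}$ is fine), but it is \emph{not} closed under extensions inside the category of local systems. For example, on $S^1$ the rank-two local system with unipotent monodromy $\left(\begin{smallmatrix}1&1\\0&1\end{smallmatrix}\right)$ is a nonsplit extension of $\underline{\bbQ}$ by $\underline{\bbQ}$ but is not constant. So the final step in your constant-sheaf case — assembling the abutment from its $\rE_\infty$-quotients by ``closure under extensions'' — does not literally hold as stated, and the same gap is present in the paper's one-line remark. In the paper's actual applications this is harmless, because there the abutment $R^i a_{\bullet,*}\bbQ$ for the reference (constant) spectral sequence is shown to be constant directly via Lemma~\ref{lem:projectionisconstantsimplicial} (pushforward along a product projection), bypassing the extension argument. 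If you want a self-contained fix within the corollary, one option is to add the hypothesis that the abutment itself is known to be constant (as it is in the applications) and then observe that a sub-local-system of a constant local system has trivial monodromy, hence is constant — which also repairs the induction on the filtration steps $G^p$ in the proof of Theorem~\ref{thm:spectralseqinput}.
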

\begin{proof}
The previous Lemma shows that the $\rE_{r}^{j,i}$-terms of the associated spectral sequence are $\cS$-constructible. It is enough to note that the abutment has a filtration whose graded quotients are $\cS$-constructible and that this category is closed under extensions in the full abelian category of constructible sheaves. In order to see this, we may again restrict to a stratum and show that the category of local systems is closed under extensions. 
This follows from \cite{stacks-project}*{\href{https://stacks.math.columbia.edu/tag/0123}{Lemma 093U}}.

The last assertion follows directly from the fact that constant sheaves also form an abelian category (and is also closed under extensions).
    
\end{proof}

The spectral sequences above are also functorial in the following sense. Suppose we are given a morphism of simplicial spaces (over $S$) $F: \cX_{\bullet} \rightarrow \cY_{\bullet}$, a sheaf $\cG_{\bullet}$ on $\cY_{\bullet}$, and a morphism $\phi: F^{*}(\cG_{\bullet}) \rightarrow \cF_{\bullet}$.
This data induces a natural morphism of spectral sequences
$$
\rE(\cY_{\bullet}/S,\cG_{\bullet}) \rightarrow \rE(\cX_{\bullet}/S,\cF_{\bullet})
$$
where the maps on the $\rE_1^{p,q}$ terms and on the abutments are the natural induced maps on cohomology (via functoriality). In the following, we will be interested in gleaning information about the kernels of the induced morphisms 
$$ \rR^{i}a_{\bullet,*}(\cG_{\bullet}) \rightarrow \rR^{i}b_{\bullet,*}(\cF_{\bullet})$$
of sheaves on $S$ (where $a_{\bullet}: \cY_{\bullet} \rightarrow S, b_{\bullet}: \cX_{\bullet} \rightarrow S$ are the augmentation maps) from the corresponding kernels of the induced maps on the $\rE_1$ terms of the aforementioned spectral sequences. In particular, the following Lemma will be key in the following.

\begin{theorem}\label{thm:spectralseqinput}
Let $F: \cX_{\bullet} \rightarrow \cY_{\bullet}$, $\cF_{\bullet}$, $\cG_{\bullet}$, $a_{\bullet}$, $b_{\bullet}$ and $\phi$ be as above. Suppose that:
\begin{enumerate}
\item $R^ia_{j,*}\cG_{j}$ is a constant sheaf on $S$ (for all $i$ and $j$). 
\item Let $\cS$ be a stratification of $S$ such that $Rb_{p,*}\cF_{p}$ is $\cS$-constructible on $S$ for all $p \leq k$.
\end{enumerate}
Then $\cK_i:= \ker(R^ia_{\bullet,*}(\cG_{\bullet}) \rightarrow R^ib_{\bullet,*}(\cF_{\bullet}))$ is $\cS$-constructible for all $i \leq k$. Moreover, $\cK_i$ restricted to a stratum is a {\it constant} local system.

\end{theorem}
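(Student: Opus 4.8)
The plan is to exploit that hypothesis~(1) forces the entire spectral sequence of $\cG_{\bullet}$, and in particular its abutment, to be constant, to sidestep the $\cF_{\bullet}$-side by truncating the simplicial object at level $k$, and then to quote Lemma~\ref{lem:Sconsttosubobject}. Write $A^n:=R^na_{\bullet,*}\cG_{\bullet}$ and $B^n:=R^nb_{\bullet,*}\cF_{\bullet}$, so that $\cK_n=\ker(A^n\to B^n)$ for the morphism on abutments induced by $\phi$. Since every $\rE_1$-term $R^ia_{j,*}\cG_j$ of $\rE(\cY_{\bullet}/S,\cG_{\bullet})$ is constant by~(1), Corollary~\ref{cor:abutmentconstant} makes every $\rE_r$-term, and hence every abutment $A^n$, a constant sheaf on $S$.

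The subtlety is that $B^n$ itself need not be $\cS$-constructible: in $\rE(\cX_{\bullet}/S,\cF_{\bullet})$ the columns $j>k$ are not controlled, and the differentials running from the low columns into them are exactly what make Lemma~\ref{lem:Sconstruct1} produce, in part of the bidegree range, only ``a subobject of an $\cS$-constructible sheaf''. I would avoid this by restricting to $\Delta[k]$: let $\cF^{[k]}_{\bullet}$ be the restriction of $\cF_{\bullet}$ to the $k$-truncated simplex category, with augmentation $b^{[k]}\colon\cX^{[k]}_{\bullet}\to S$, and set $\wt{B}^n:=R^nb^{[k]}_*\cF^{[k]}_{\bullet}$. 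The spectral sequence computing $\wt{B}^n$ has only the columns $0\le j\le k$, all with $\cS$-constructible $\rE_1$-terms $R^ib_{j,*}\cF_j$ by~(2); since the uncontrolled columns no longer occur, the arguments of Lemma~\ref{lem:Sconstruct1} and Corollary~\ref{cor:abutmentconstant} go through with every $\rE_r$-term now genuinely $\cS$-constructible (the ``subobject of'' alternative is never invoked), so every $\wt{B}^n$ is $\cS$-constructible.

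Next I would compare $B^n$ with $\wt{B}^n$. The inclusion $\Delta[k]\hookrightarrow\Delta$ gives a natural truncation morphism $Rb_{\bullet,*}\cF_{\bullet}\to Rb^{[k]}_*\cF^{[k]}_{\bullet}$ whose fiber involves only the simplicial terms $Rb_{j,*}\cF_j$ with $j>k$ and so is concentrated in cohomological degrees $\ge k+1$; hence the induced map $B^n\to\wt{B}^n$ is an isomorphism for $n<k$ and injective for $n=k$. Now fix $n\le k$. Composing with this injection gives $\cK_n=\ker(A^n\to B^n)=\ker(A^n\to\wt{B}^n)$, the kernel of a morphism from the constant (hence $\cS$-constructible) sheaf $A^n$ into the $\cS$-constructible sheaf $\wt{B}^n$; by Lemma~\ref{lem:Sconsttosubobject} it is $\cS$-constructible. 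Restricting to a stratum $S_{\alpha}$ (which we may take connected), $A^n|_{S_{\alpha}}$ is a constant local system and $\wt{B}^n|_{S_{\alpha}}$ is a local system, so $\cK_n|_{S_{\alpha}}$, being the kernel of a morphism from a constant local system to a local system, is a constant local system by Remark~\ref{rem:localsystemanalytic}(3) and its analogs in the \'etale and o-minimal settings. This gives both assertions of the theorem.

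The step I expect to be the main obstacle is exactly the one handled by the truncation: unlike on the $\cG_{\bullet}$-side, one cannot directly invoke Corollary~\ref{cor:abutmentconstant} to conclude that $B^n$ is $\cS$-constructible, since $\rE(\cX_{\bullet}/S,\cF_{\bullet})$ is only known to be constructible in the columns $j\le k$; the argument must instead exhibit $B^n$, for $n\le k$, as a subsheaf of the genuinely $\cS$-constructible sheaf $\wt{B}^n$. A secondary point requiring care is the degree bookkeeping in the comparison $B^n\hookrightarrow\wt{B}^n$, which is what forces the conclusion to the range $n\le k$.
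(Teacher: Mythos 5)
Your proof is correct and takes a genuinely different route from the paper's. The paper tracks the kernels $K_r^{j,i}=\ker\bigl(\rE_r^{j,i}(\cG_\bullet)\to\rE_r^{j,i}(\cF_\bullet)\bigr)$ page by page, using Lemma~\ref{lem:Sconstruct1} to keep careful track of which terms on the $\cF_\bullet$-side are honestly $\cS$-constructible and which are only subobjects of $\cS$-constructible sheaves, then passes to $\rE_\infty$ and runs an induction on the abutment filtration $G^p\cap\cK_k$. You sidestep this entirely by replacing $B^n=R^nb_{\bullet,*}\cF_\bullet$ with the $k$-truncated abutment $\wt B^n$: the truncated spectral sequence lives wholly inside the abelian category of $\cS$-constructible sheaves, so $\wt B^n$ is genuinely $\cS$-constructible, and the short exact sequence of total complexes $0\to T_{j\ge k+1}\to T\to T_{j\le k}\to 0$, whose left term is concentrated in total degree $\ge k+1$, gives the injectivity $B^n\hookrightarrow\wt B^n$ for $n\le k$. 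Composing with this injection exhibits $\cK_n$ for $n\le k$ as the kernel of a map from the constant sheaf $A^n$ into the $\cS$-constructible sheaf $\wt B^n$, and Lemma~\ref{lem:Sconsttosubobject} together with Remark~\ref{rem:localsystemanalytic}(3) finishes the argument. Both proofs invoke Corollary~\ref{cor:abutmentconstant} to make $A^n$ constant; what your approach buys is that it never has to distinguish ``$\cS$-constructible'' from ``subobject of $\cS$-constructible'' nor to chase a filtration on the abutment, at the modest cost of checking that the truncation map behaves correctly in the cohomological degree range $n\le k$ — which you do correctly.
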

\begin{proof}
We first note that the kernels on the $r$-th page $$K_r^{j,i} := \ker(\rE_r^{j,i}(\cG_{\bullet}) \rightarrow \rE_r^{j,i}(\cF_{\bullet}))$$ are $\cS$-constructible and constant on each stratum, for all $j+i \leq k$. 
For this note that, by Lemma \ref{lem:Sconstruct1}, $\rE_r^{j,i}(\cF_{\bullet}))$ is a subobject of an $\cS$-constructible sheaf for all $i + j \leq k$. Since $\rE_r^{j,i}(\cG_{\bullet})$ is a constant sheaf (by \ref{cor:abutmentconstant}), it follows by Lemma \ref{lem:Sconsttosubobject} that $K_r^{j,i}$ is $\cS$-constructible for all $i+j \leq k$. On the other hand, its restriction to each straturm is the kernel of a morphism from a constant sheaf to a locally constant sheaf, and hence must be a constant sheaf.

We conclude that the analogous assertion holds for the kernel $K_{\infty}^{j,i} := \ker(\rE_{\infty}^{j,i}(\cG_{\bullet}) \rightarrow \rE_{\infty}^{j,i}(\cF_{\bullet}))$ for $i+j \leq k$. Consider now the induced maps $F_{i,*}: R^ia_{\bullet,*}\cG_{\bullet} \rightarrow R^ib_{\bullet,*}\cF_{\bullet}$. This morphism is the induced morphism of abutments given by the morphism of spectral sequences
$$\rE(\cY_{\bullet}/S,\cG_{\bullet}) \rightarrow\rE(\cX_{\bullet}/S,\cF_{\bullet}) .$$
We shall prove the theorem in the case $i = k$, the proof is similar for $i < k$.
By definition of convergent spectral sequence, $R^ka_{\bullet,*}\cG_{\bullet}$ (respectively $R^kb_{\bullet,*}\cF_{\bullet}$) comes equipped with a filtration $G^p$ (respectively $F^p$) such that $G^p/G^{p+1} \isom \rE_{\infty}^{p,k-p}(\cG_{\bullet})$ (respectively $F^p/F^{p+1} \isom \rE_{\infty}^{p,k-p}(\cF_{\bullet})$).
Moreover, the morphism $F_{k,*}$ is a filtered morphism. Note that $F^p = F^{p+1} $ if $j < 0$, $F^p = 0$ if $p > k$, and the filtration is exhaustive. The similar assertion also holds for $G^p$. Consider the induced filtration on the kernel $\cK_k$ (i.e. $G^p \cap \cK_k$). We denote this filtration by $K^p$. As a result, one has exact sequences:
$$0 \rightarrow K^p \rightarrow G^p \rightarrow F^p.$$
By induction, one sees that $G^p$ is a constant sheaf, and $F^p$ is $\cS$-constructible (for all $p$). 

It follows that $K^{p}$ also satisfies the desired property: it is $\cS$-constructible and its restriction to each stratum is constant. For the latter statement, note that $K^p$ restricted to a stratum is constant (being the kernel of a morphism from a constant sheaf to a locally constant sheaf).  
\end{proof}

\section{Complexity of stratifications}
\label{sec:stratification}
In this section, we explain some VC (co)density results and apply them to prove the following key result on lengths of stratifications. In turn, this result will be one of the key ingredients
in the proof of 
the complex and \'{e}tale versions of Theorem~\ref{thm:main}.

Let $X,Z$ be $K$-schemes of finite type with $K$ an algebraically closed field, and fix an integer $p \geq 1$. Given a closed point $z \in Z$ and a subscheme 
$Y \subset X \times Z$, we will denote by $Y_{z}$
the pull-back of the diagram
\[
\xymatrix{
& Y \ar[d]_{\pi_Z} \\
z \ar[r] & Z,
}
\]
where
$\pi_Z:Y \rightarrow Z$ is the restriction of the canonical projection
$X \times Z \rightarrow Z$ to  $Y$. In particular, we have $Y_{z} = Y \times_{X \times Z} (X \times z)$. Since the base change of a subscheme is a subscheme, we have a natural immersion $Y_z \hookrightarrow X \times z \isom X$, and we may identify $Y_z$  with its image in $X$. In particular, in the following we consider $Y_z$ as a subscheme of $X$.\footnote{Given two schemes $X$ and $Z$ over $K$, $X \times_K Z$ denotes the fiber product over $K$.} 

Let $\cY := (Y_0,\ldots,Y_{r})$ be a filtration of $X \times Z$ by subschemes where $$Y_0 \supset Y_1 \supset \cdots \supset Y_r.$$
We may apply the aforementioned base change construction to obtain a filtration $\cY_z := (Y_{0,z},\ldots,Y_{r,z})$ of $X$.

\begin{remark}
With notation as above, note that one can associate a stratification to $\mathcal{Y}$. If $\cS(Y)$ is the associated stratification, then the length of $\cS(Y) \leq r$.
\end{remark}
 
Given a closed point $z = (z_0,\ldots,z_n) \in Z^{n+1}$ and an ordered subset $J = \{j_1,\ldots,j_l\} \subset \{0,\ldots,n\}$, we denote by $z^{(J)} = (z_{j_1},\ldots,z_{j_{l}}) \in Z^{l}$ the resulting closed point. With this notation, we have the following key result:

\begin{theorem}
\label{thm:vcdensityapp}
For $1 \leq i \leq p+1$, 
let $\mathcal{Y}_i := (Y_{i,0} \supset \cdots \supset Y_{i,r_i})$, be a filtration of $X \times Z^i$ by closed subschemes.
Then there  exists a constant $C > 0$ depending on 
$\mathcal{Y}_1,\ldots,\mathcal{Y}_{p+1}$,
such that for all $n >0$, and closed points
$z^{[n]} = (z_0,\ldots, z_n) \in Z^n$, there exists a stratification $\cS$
of $X$ such that 

\begin{enumerate}
    \item The length of $\cS$ is bounded by 
    \[
C \cdot (n+1)^{(p+1)\dim X},
\]
\item $\cS$ refines the stratifications $\cS(\cY_{i,z^{(J)}})$ of  $X$ which are canonically 
associated to the filtrations $\cY_{i,z^{(J)}}$ for all $i$ and ordered subsets $J \subset \{0,\ldots,n\}$ of order $i$.
\end{enumerate}

\end{theorem}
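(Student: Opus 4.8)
The plan is to realize $\cS$ as the stratification of $X$ by $0/1$-patterns of a suitable finite family $\mathcal{F}$ of closed subschemes of $X$, and then to bound $\card(\cS)$ by invoking the generalization to schemes of the $0/1$-pattern bound of \cite{RBG01} (Theorem~\ref{thm:Hilbert}). Concretely, for each $i$ with $1 \le i \le p+1$, each $k$ with $0 \le k \le r_i$, and each ordered $i$-subset $J \subset \{0,\ldots,n\}$, the base change $Y_{i,k,z^{(J)}} \subset X$ is a closed subscheme; I would let $\mathcal{F}$ be the collection of all of these. Since there are at most $(n+1)^i \le (n+1)^{p+1}$ ordered $i$-subsets of $\{0,\ldots,n\}$, we get $\card(\mathcal{F}) \le C_0\,(n+1)^{p+1}$ with $C_0 := \sum_{i=1}^{p+1}(r_i+1)$. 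I would then take $\cS$ to be the collection of all nonempty sets of the form $\left(\bigcap_{F \in \mathcal{F}_1} F\right) \setminus \left(\bigcup_{F \in \mathcal{F}_0} F\right)$, where $\mathcal{F}$ is written as a disjoint union of two (possibly empty) subfamilies $\mathcal{F}_0$ and $\mathcal{F}_1$, each such set being given its reduced induced structure. Each such set is locally closed, being a closed subscheme intersected with an open subset; they partition $X$; and by construction $\card(\cS)$ equals the number of realizable $0/1$-patterns of $\mathcal{F}$.

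The refinement claim (2) is then essentially formal. By construction the strata of $\cS(\cY_{i,z^{(J)}})$ are the nonempty sets $Y_{i,k,z^{(J)}} \setminus Y_{i,k+1,z^{(J)}}$, together with the end pieces $X \setminus Y_{i,0,z^{(J)}}$ and $Y_{i,r_i,z^{(J)}}$; since every $Y_{i,k,z^{(J)}}$ lies in $\mathcal{F}$, each such stratum is the union of exactly those members of $\cS$ on which the (at most two) relevant coordinates of the pattern take the prescribed values. Hence $\cS$ refines $\cS(\cY_{i,z^{(J)}})$ for every $i$ and every ordered $i$-subset $J$.

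The substance is part (1), i.e.\ bounding the number of realizable $0/1$-patterns of $\mathcal{F}$. The key point is that all members of $\mathcal{F}$ are fibers of a single family of closed subschemes of $X$ parametrized by the fixed scheme $T := \coprod_{i=1}^{p+1}\coprod_{k=0}^{r_i} Z^i$, which does not depend on $n$: namely the family with fiber $Y_{i,k,z}$ over $(i,k,z)\in T$, coming from the fixed closed subschemes $Y_{i,k}\subset X\times Z^i$. This family has bounded complexity --- for instance, after fixing a very ample line bundle on $X$ the Hilbert polynomials of the fibers $Y_{i,k,z}$ take only finitely many values, each $Y_{i,k}$ being a single Noetherian scheme, so the degrees of equations cutting out the fibers are uniformly bounded. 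Applying Theorem~\ref{thm:Hilbert} to this family, the number of realizable $0/1$-patterns of any $m$ of its members is at most $C_1 m^{\dim X}$ for a constant $C_1$ depending only on the family, hence only on $\cY_1,\ldots,\cY_{p+1}$. Taking $m = \card(\mathcal{F}) \le C_0(n+1)^{p+1}$ yields $\card(\cS) \le C_1 C_0^{\dim X}\,(n+1)^{(p+1)\dim X}$, and one sets $C := C_1 C_0^{\dim X}$.

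I expect the main obstacle to be exactly the input Theorem~\ref{thm:Hilbert}: one needs the $0/1$-pattern count for a bounded-complexity family of subschemes of a (proper) scheme to grow like $m^{\dim X}$, with the implied constant independent of $m$ --- this is the generalization of the linear-algebra and Hilbert-function argument of \cite{RBG01} from affine hypersurfaces to arbitrary schemes and families of subschemes, which has to be established separately. Granting it, the only thing to watch in the argument above is that each filtration $\cY_i$ is indexed by the correct number $i$ of copies of $Z$, so that the dominant contribution to $\card(\mathcal{F})$ is $\Theta(n^{p+1})$; it is precisely this matching --- rather than, say, intersecting the stratifications $\cS(\cY_{i,z^{(J)}})$ one at a time, which would make the length grow exponentially in $n$ --- that produces the exponent $(p+1)\dim X$.
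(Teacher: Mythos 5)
Your proposal is correct and matches the paper's approach: the paper also takes $\cS$ to be the stratification by $0/1$-patterns of the collection $\{Y_{i,k,z^{(J)}}\}$ (whose cardinality is $O(n^{p+1})$ precisely because the filtration $\cY_i$ is indexed by $Z^i$), and bounds the pattern count via Theorem~\ref{thm:Hilbert} applied to a single bounded-complexity family depending only on $\cY_1,\ldots,\cY_{p+1}$; this is the content of Lemmas~\ref{lem:vc} and~\ref{lem:vcdensityapp}. The one place your argument is loose is the ``bounded complexity'' justification — an appeal to Hilbert polynomials does not directly give a uniform degree bound on defining equations of the fibers — whereas the paper's route is elementary and watertight: cover $X$ and $Z$ by affines, embed the various $Z^i$ into blocks of a single affine space, cut $Y_{i,k}$ out by finitely many polynomials in the ambient affine chart, and observe that specializing the $Z$-variables at $z^{(J)}$ cannot raise the $X$-degree, so Theorem~\ref{thm:Hilbert} applies directly.
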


The proof of Theorem~\ref{thm:vcdensityapp} relies on the following theorem (Theorem~\ref{thm:Hilbert}) which gives upper bounds on the number of realizable $0/1$ patterns for a family of hypersurfaces in affine space (i.e. VC-codensity bounds). We begin with some notation. Let $X$ be a $K$-scheme of finite type
and $\mathcal{S}$ a finite set of closed subschemes of $X$. Given  $\sigma \in \{0,1\}^{\mathcal{S}}$, we set
\[
\RR(\sigma) := \{x \in X(K) \mid \forall Y \in \mathcal{S},  x \in Y \; \Leftrightarrow \; \sigma(Y) =1\}. 
\]
Note that $\RR(\sigma)$ can be viewed as the set of rational points of a locally closed subset of $X$.
We set
\[
\Sigma(X,\mathcal{S}) := \{\sigma \in \{0,1\}^{\mathcal{S}} \mid \RR(\sigma) \neq \emptyset\},
\]
and 
\[
N(X,\mathcal{S}) := \card(\Sigma(X,\mathcal{S})).
\]

The following theorem in the case $X = \mathbb{A}^m$ is the main result of  (\cite{RBG01}, see also \cite{BPR09}). However, the proof of this result in the aforementioned papers do not extend to the case when $X \neq \mathbb{A}^m$. We give below a cohomological approach to such bounds (which is valid for general $X$).

\begin{theorem}
    \label{thm:Hilbert}
    Suppose $X \subset \mathbb{A}^m$ be a closed subscheme and fix an integer $D \geq 0$. Then there exists $C = C(X,D) > 0$, such that for any $N > 0$, $F_1,\ldots,F_N \in k[X_1,\ldots,X_m]_{\leq D}$, 
    \[
    N(X,\mathcal{S}) \leq C \cdot N^{\dim X}
    \]
    where $\mathcal{S} = \{Y_1,\ldots,Y_N\}$, and $Y_i$ is the intersection of the hypersurface defined by  $F_i = 0$ with $X$.
    
\end{theorem}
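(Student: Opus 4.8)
\emph{Approach.} I would prove this by induction on $\dim X$. Since $\Sigma(X,\mathcal{S})$ depends only on the $k$-points, I may assume $X$ reduced; besides $\dim X$ I carry along the degree $\deg X$, defined for possibly reducible $X$ as the sum of the degrees (via projective closures) of the irreducible components of $X$. The only external facts needed are that a $0$-dimensional scheme has at most $\deg X$ closed points, and the Bézout inequality $\deg(V(F)\cap X)\le \deg X\cdot\deg F$ for $F$ not vanishing on $X$.

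\emph{Reductions.} Writing $X=X_1\cup\cdots\cup X_s$ for the irreducible components, every realizable pattern is realized at a point of some $X_l$, so $N(X,\mathcal{S})\le\sum_l N\bigl(X_l,\{Y_i\cap X_l\}\bigr)$; since $s\le\deg X$, $\dim X_l\le\dim X$ and $\deg X_l\le\deg X$, it suffices to treat irreducible $X$. Finally, any $Y_i$ with $X\subseteq Y_i$ forces the corresponding coordinate of every realizable pattern and may be discarded, so I may assume $Y_i\subsetneq X$ for all $i$, whence each $Y_i$ has every component of dimension $\dim X-1$.

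\emph{Induction.} If $\dim X=0$, then $N(X,\mathcal{S})\le\card(X(k))\le\deg X$, i.e.\ the bound holds with $C=\deg X$. Suppose $\dim X=d\ge1$, $X$ irreducible, all $Y_i\subsetneq X$. The key observation is that each locally closed piece $\RR(\sigma)$ is either the open piece $X\setminus\bigcup_i Y_i$ (corresponding to the all-zero pattern, at most one such) or is contained in some $Y_i$ (any $i$ with $\sigma(Y_i)=1$). For a fixed $i$, a realizable pattern with $\sigma(Y_i)=1$ is determined by the pattern it induces of the family $\{Y_j\cap Y_i\}_{j\ne i}$ of subschemes of $Y_i$ (two such inducing the same pattern would be realized at points of $Y_i$ with the same full pattern, hence equal), and that induced pattern is itself realizable on $Y_i$; therefore
\[
N(X,\mathcal{S})\ \le\ 1+\sum_{i=1}^{N} N\bigl(Y_i,\{Y_j\cap Y_i\}_{j\ne i}\bigr).
\]
Now $Y_i$ is a closed subscheme of $\mathbb{A}^m$ with $\dim Y_i\le d-1$ and, by Bézout, $\deg Y_i\le D\cdot\deg X$, and each $Y_j\cap Y_i$ is still cut out in $Y_i$ by a single equation of degree $\le D$, with $\le N-1$ of them. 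So the induction hypothesis bounds each summand by $C'\cdot(N-1)^{d-1}$ with $C'$ a function of $D$, $d-1$, $D\deg X$; summing gives $N(X,\mathcal{S})\le 1+C'N^{d}\le(1+C')N^{d}$. Reinstating the factor $\deg X$ from the reduction to components yields $N(X,\mathcal{S})\le C(X,D)\,N^{\dim X}$, where $C(X,D)$ is the explicit function of $D,\dim X,\deg X$ satisfying $C(0,e)=e$ and $C(d,e)=e\bigl(1+C(d-1,eD)\bigr)$.

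\emph{Main obstacle.} The delicate point is that the naive induction allowing "$C$ to depend on $X$" is circular, since in the inductive step the ambient scheme is the \emph{varying} $Y_i$, not $X$; the remedy is precisely to propagate the degree through the recursion and invoke Bézout to keep $\deg Y_i$ under control. (By contrast $N(X,\mathcal{S})$ genuinely is not bounded by a constant independent of $X$, as the $0$-dimensional case already shows — the theorem is entirely about the exponent $\dim X$ in $N$.) Since nothing in the argument uses anything about the base field, this also provides the étale-case input needed in Section~\ref{sec:stratification}; alternatively one can run a cohomological variant in which the crude inequality above is replaced by a bound on the number of realizable patterns in terms of the ($0$-th) Betti numbers of the finitely many intersection schemes $\bigcap_{i\in I}Y_i$ with $\card(I)\le\dim X$, which are uniformly bounded because these schemes are cut out by at most $\dim X$ equations of degree $\le D$ — an approach closer in spirit to the sheaf-theoretic machinery used elsewhere in the paper.
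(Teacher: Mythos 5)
Your proof is correct, but it takes a genuinely different route from the paper. The paper generalizes the Ronyai--Babai--Ganapathy \cite{RBG01} linear-algebra argument directly: for each realizable pattern $\rho$ one builds a global section $P_\rho\in\Gamma(\bar X,O_{\bar X}(ND))$ (a product of the relevant $F_i$'s and a fixed auxiliary hyperplane $Q$), shows by a triangularity/maximal-zeros argument that the $P_\rho$ are linearly independent, and then bounds their number by $h^0(\bar X, O_{\bar X}(ND))$, which grows like $N^{\dim X}$ by the Hilbert polynomial. You instead run a geometric induction on $\dim X$, using the decomposition $N(X,\mathcal{S})\le 1+\sum_i N(Y_i,\{Y_j\cap Y_i\}_{j\ne i})$ together with Krull's principal ideal theorem (to drop the dimension) and Bézout (to keep the degree of the ambient scheme under control across the recursion). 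You correctly identify the real subtlety of this route --- that the inductive step changes the ambient scheme to the varying $Y_i$, so you must prove the stronger statement where $C$ depends only on $(\dim X,\deg X,D)$ rather than on $X$ itself, and you track the degree through $C(d,e)=e\bigl(1+C(d-1,eD)\bigr)$ to make this non-circular. Both proofs work over an arbitrary algebraically closed field and hence both supply the input needed for the étale case; the paper's proof is shorter and localizes the asymptotics in a single Hilbert-polynomial estimate, whereas yours yields an explicit recursive constant and avoids the choice of projective embedding except through the definition of degree. One small imprecision worth fixing: the phrase ``whence each $Y_i$ has every component of dimension $\dim X-1$'' silently assumes $Y_i\neq\emptyset$; when $F_i$ is a nonzero constant you have $Y_i=\emptyset$, which causes no harm in the recursion (it contributes $0$ to the sum) but should be noted, just as you noted the discarding of the $Y_i$ with $X\subseteq Y_i$.
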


\subsection{Proof of Theorem \ref{thm:Hilbert}}
\begin{proof}
Fix an embedding $\bbA^m \hookrightarrow \bbP^m$, and let $\bar{X} \subset \mathbb{P}^m$ denote the projective closure of $X$ in $\bbP^m$. Similarly, we consider the hypersurface $H_i$ given by the projective closures of $F_i$. These are by definition hypersurfaces of degree at most $D$ in $\bbP^m$. We may now consider  $\overline{\mathcal{S}} = \{\bar{Y}_1,\ldots, \bar{Y}_N\}$,
where $\bar{Y}_i$ is $\bar{X} \cap H_i$.
Then it is clear that
\[
N(X,\mathcal{S}) \leq N(\bar{X},\overline{\mathcal{S}}).
\]

In particular, we may assume that $X$ is a closed subscheme of $\mathbb{P}^m$, $F_1,\ldots,F_N \in k[X_0,\ldots,X_m]$ are homogeneous of degree $D$, and $Y_i$ the hypersurface defined by restricting to $X$ the hypersurface $F_i = 0$.

Let $O_X(D)$ the restriction of the line bundle
$O_{\mathbb{P}^m}(D)$ to $X$. For each $\rho \in \Sigma(X,\mathcal{S})$
fix a point $x_\rho \in \RR(X,\rho)$. By prime avoidance, there exists a homogeneous polynomial 
$Q \in k[X_0,\ldots,X_m]_{\leq D}$,
(in fact, with $\deg(Q) = 1$) such that 
that $Q(x_\rho)\neq 0$ 
for each $\rho \in \Sigma(X,\mathcal{S})$. By replacing $Q$ by its degree $D$ power, we may assume that $Q$ has degree $D$. Finally, for each 
for each $\rho \in \Sigma(X,\mathcal{S})$
let 
$$
\displaylines{
P_{\rho} = \bigotimes_{i \mid \rho(Y_i) = 0} F_i 
           \bigotimes_{i \mid \rho(Y_i) = 1} Q,
}
$$
viewed as a global section
 $P_\rho \in \Gamma(X,O_X(ND))$ of the line bundle $O_X(ND)$. Note that the latter is a finite dimensional $k$-vector space, and $P_{\rho}(x_{\rho}) \neq 0$. We claim that the sections
$\{P_\rho \mid \rho \in \Sigma(X,\mathcal{S})\}$
are linearly independent. If not, then there exists a linear
dependence,
\[
\sum_{\rho \in \Sigma(X,\mathcal{S})} a_\rho P_\rho = 0,
\]
where each $a_\rho \in K$ and not all $a_\rho = 0$. Let 
$\xi \in \Sigma(X,\mathcal{S})$ be an element in 
$\Sigma(X,\mathcal{S})$
with the maximum number of zeros amongst all
$\rho \in \Sigma(X,\mathcal{S})$ with 
$a_\rho \neq 0$. More precisely, for each $\sigma \in \Sigma(X,\mathcal{S}) $, let $n_{\sigma}$ denote the number of $\rho$ such that $P_{\rho}(x_{\sigma}) = 0$ and $a_{\rho} \neq 0$. 
We choose $\xi$ so that $n_{\xi}$ is maximal. Then,
\[
\sum_{\rho \in \Sigma(X,\mathcal{S})} 
a_\rho P_\rho(x_\xi) = 0,
\]
and this is a contradiction since
$a_\xi P_\xi(x_\xi) \neq 0$, while because of the maximality of the
number of zeros in $\xi$, 
$a_\rho P_\rho(x_\xi)=0$ for all $\rho \neq \xi$.

On the other hand, as a consequence of the theory of Hilbert polynomials, there exists a positive integer $M_0$ depending only on $X$, such that 
$h^0(X, O_X(M)) := \dim \Gamma(X,O_X(M))$ is a polynomial in $M$ of degree $\dim X$ for all $M > M_0$ (the Hilbert polynomial of $X$). This implies that that there exists a constant $C' > 0$ depending only on $X$,
such that for \emph{all} $M > 0$,
$h_0(X,O_X(M))  \leq C' \cdot T^{\dim X}$. It follows that

$$
\displaylines{
h^0(X,O_X( N D)) \leq C \cdot N^{\dim X},
}
$$
where $C = C' \cdot D^{\dim X}$ depends only on $X$ and $D$.
\end{proof}

\subsection{Proof of Theorem \ref{thm:vcdensityapp}}
We first prove a Lemma in the setting that $X \subset \mathbb{A}^m, Z \subset \mathbb{A}^{\ell}, p =1,$ and the filtration $\cY$ is a single closed subscheme $Y \subset X \times Z$.

\begin{lemma}
\label{lem:vc}
With notation as above, there exists a constant $C > 0$ which depends only on  the embedding
$Y \hookrightarrow X \times Z$, such that for any $n >0$,  $z^{[n]} = (z_0,\ldots,z_n) \in Z^{n+1}$, 
\[
N(X,\mathcal{S}(z^{[n]})) \leq C \cdot (n+1)^{\dim X},
\]
where $\mathcal{S}(z^{[n]}) = (Y_{z_0},\ldots,Y_{z_n})$.
\end{lemma}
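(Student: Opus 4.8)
The plan is to reduce Lemma~\ref{lem:vc} to Theorem~\ref{thm:Hilbert} by encoding each fiber $Y_z$ as an intersection of finitely many hypersurface sections of $X$ whose degrees are bounded uniformly in $z$. Using the embedding $Y \hookrightarrow X\times Z \hookrightarrow \mathbb{A}^m\times\mathbb{A}^{\ell}$, I would first fix a finite set of polynomials $G_1,\dots,G_s\in k[X_1,\dots,X_m,W_1,\dots,W_{\ell}]$ generating the ideal of $Y$, and let $D$ be the maximum of their degrees in the variables $X_1,\dots,X_m$. Substituting a closed point $z\in Z(k)$ for $W_1,\dots,W_{\ell}$ produces polynomials $G_i(\,\cdot\,,z)\in k[X_1,\dots,X_m]$ of degree at most $D$ (substitution cannot raise the $X$-degree), and on $k$-points one has $x\in Y_z$ if and only if $G_i(x,z)=0$ for all $i$; thus $Y_z=\bigcap_{i=1}^s\bigl(V(G_i(\,\cdot\,,z))\cap X\bigr)$ as subsets of $X(k)$.

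Now, given $z^{[n]}=(z_0,\dots,z_n)\in Z^{n+1}$, I would introduce the auxiliary family
\[
\mathcal{S}' \;=\; \bigl\{\, V\bigl(G_i(\,\cdot\,,z_j)\bigr)\cap X \;:\; 1\le i\le s,\ 0\le j\le n \,\bigr\},
\]
consisting of at most $s(n+1)$ hypersurface sections of $X$, each of degree at most $D$. Since for $x\in X(k)$ membership in $Y_{z_j}$ is the conjunction over $i$ of membership in $V(G_i(\,\cdot\,,z_j))\cap X$, there is a well-defined ``conjunction'' map from $0/1$-patterns on $\mathcal{S}'$ to $0/1$-patterns on $\mathcal{S}(z^{[n]})=(Y_{z_0},\dots,Y_{z_n})$ whose restriction to realizable patterns surjects onto the realizable patterns of $\mathcal{S}(z^{[n]})$; hence $N(X,\mathcal{S}(z^{[n]}))\le N(X,\mathcal{S}')$. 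Applying Theorem~\ref{thm:Hilbert} to the family $\mathcal{S}'$ yields a constant $C_0=C_0(X,D)>0$ with $N(X,\mathcal{S}')\le C_0\cdot\bigl(s(n+1)\bigr)^{\dim X}$, and taking $C:=C_0\cdot s^{\dim X}$ — which depends only on $X$ and on the chosen defining equations of $Y$ in $X\times Z$, hence only on the embedding $Y\hookrightarrow X\times Z$ — gives $N(X,\mathcal{S}(z^{[n]}))\le C\cdot(n+1)^{\dim X}$, as desired.

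Since Theorem~\ref{thm:Hilbert} is already available, no step here presents a genuine difficulty; the care needed is entirely bookkeeping. One must check that the degree bound $D$ is uniform in $z$ (handled above), that the degenerate case $G_i(\,\cdot\,,z_j)\equiv 0$ causes no trouble (it contributes the section $V(0)\cap X=X$, which is permitted in Theorem~\ref{thm:Hilbert} and imposes a vacuous constraint), and that the $0/1$-pattern conventions line up so that the conjunction map is genuinely well defined on the realizable patterns. I would close with the remark that the identical reduction applies verbatim with $Z$ replaced by $Z^i$ and with $\cY$ a single subscheme, which is precisely the form in which this lemma feeds into the proof of Theorem~\ref{thm:vcdensityapp}.
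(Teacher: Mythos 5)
Your proposal is correct and follows essentially the same route as the paper: write $Y$ as the vanishing locus of finitely many polynomials on $\bbA^m\times\bbA^\ell$, specialize at each $z_j$ to obtain $O(n)$ hypersurface sections of $X$ of uniformly bounded degree each of whose $j$-blocks cuts out $Y_{z_j}$, note that the associated partition of $X(k)$ refines that of $\mathcal{S}(z^{[n]})$, and then invoke Theorem~\ref{thm:Hilbert}. Your write-up is merely more explicit than the paper's (which is terse and has minor index typos) about the well-definedness of the conjunction map and the degenerate case $G_i(\cdot,z_j)\equiv 0$.
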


\begin{remark}
With notation as in the above lemma, note that  
\[
(\RR(\sigma))_{\sigma \in \Sigma(X,\mathcal{S}(z^{[n]}))}
\]
gives a stratification of $X$ of length $N(X,\mathcal{S}(z^{[n]}))$. In particular, the lemma immediately implies Theorem \ref{thm:vcdensityapp} in this special case.
\end{remark}

\begin{proof}[Proof of Lemma~\ref{lem:vc}]
    
    Suppose that $X \subset \mathbb{A}^m = \Spec\; k[X_1,\ldots,X_m], Z \subset \mathbb{A}^\ell = \Spec\; k[Z_1,\ldots,Z_\ell]$. Then $Y$ is an affine subscheme of $\mathbb{A}^m \times \mathbb{A}^\ell$ and is defined by a finite number of polynomial equations (say) 
    \[
    F_1(X_1,\ldots,X_m,Z_1,\ldots,Z_\ell)  =  \cdots = F_M(X_1,\ldots,X_m,Z_1,\ldots,Z_\ell) = 0.
    \]

    For $z^{[n]} = (z_0,\ldots,z_n) \in Z^{n+1} $, let 
    \[
    \mathcal{F} = (F_{i,j}(X_1,\ldots,X_m,z_j))_{1 \leq i \leq m, 0 \leq j \leq n}.
    \]
    For a fixed $j$, the collection of $n+1$ polynomials $\{F_{i,j}\}$ defines $Y_{z_j} \subset X$.
    It now follows from Theorem~\ref{thm:Hilbert} that
    \begin{eqnarray*}
    N(X,\mathcal{S}(z^{[n]})) &\leq&  
    C \cdot (n+1)^{\dim X}
    \end{eqnarray*}
    where $C$ depends on the embedding 
    $Y \hookrightarrow X \times Z$ but is clearly independent of $z^{[n]}$. Note that the degrees of $F_{i,j}$ do not depend on the point $z^{[n]}.$
\end{proof}

In the following lemma we use the same notation as in Theorem~\ref{thm:vcdensityapp}.

\begin{lemma}
\label{lem:vcdensityapp}
    There exists a constant $C > 0$, which depends on $p$ and 
    the closed immersions $\phi_{i,j}: Y_{i,z} \hookrightarrow X \times Z^i, 1 \leq i \leq p+1$, having the following property:
    for each $n>0$, and  $z^{([n])} = (z_0,\ldots,z_n) \in Z^{n+1}$, 
   \[ 
   N(X,\mathcal{S}) \leq C \cdot (n+1)^{(p+1)\dim X},
   \]
   where
    \[
    \mathcal{S} = \left(Y_{i,j,z^{(J)}}\right)_{\substack{
    1 \leq i \leq p+1,
    0 \leq j \leq r_i,\\
    J \subset [0,n], 
    \card(J) = i
    }
    }
    \]
    and 
    for an ordered subset $J \subset [n]$,  $z^{(J)} = (z_j)_{j \in J}$.
\end{lemma}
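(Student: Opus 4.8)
The plan is to reduce, by passing to affine charts on $X$ and to a fixed projective embedding of $Z$, to the polynomial $0/1$-pattern bound of Theorem~\ref{thm:Hilbert}; this is the same strategy used for Lemma~\ref{lem:vc}, now carried out for the finitely many filtrations $\mathcal{Y}_1,\dots,\mathcal{Y}_{p+1}$ at once and with $Z^i$ in place of $Z$. First I would fix a finite affine open cover $X=\bigcup_\alpha X_\alpha$ with each $X_\alpha\subset\bbA^{m_\alpha}$ closed, and a closed embedding $Z\hookrightarrow\bbP^\ell$. Since any realizable $0/1$-pattern on $(X,\mathcal{S})$ is witnessed by a closed point lying in some $X_\alpha$, the assignment sending such a pattern to (a chart $X_\alpha$ containing a witness, the induced pattern on $\mathcal{S}\cap X_\alpha:=\{Y\cap X_\alpha\mid Y\in\mathcal{S}\}$) is injective, so $N(X,\mathcal{S})\le\sum_\alpha N(X_\alpha,\mathcal{S}\cap X_\alpha)$, and as the number of charts depends only on $X$ it suffices to bound each summand. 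For each $i$ and $j$, the set $X_\alpha\times Z^i$ is closed in $\bbA^{m_\alpha}\times(\bbP^\ell)^i$, so I may regard $Y_{i,j}\cap(X_\alpha\times Z^i)$ as a closed subscheme of $\bbA^{m_\alpha}\times(\bbP^\ell)^i$, cut out by finitely many equations that are polynomial of degree at most some constant $D$ in the $X_\alpha$-coordinates and multihomogeneous in the $i$ blocks of $\bbP^\ell$-coordinates; here $D$ and the number $t_0$ of defining equations depend only on $p$ and the immersions $Y_{i,j}\hookrightarrow X\times Z^i$.

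Next, for fixed $n$ and $z^{[n]}=(z_0,\dots,z_n)\in Z^{n+1}$, I would substitute homogeneous coordinates of the relevant $z_k$'s into these equations: for an ordered $i$-subset $J\subset\{0,\dots,n\}$ this exhibits $Y_{i,j,z^{(J)}}\cap X_\alpha$ as the intersection with $X_\alpha$ of at most $t_0$ affine hypersurfaces of degree at most $D$ in $\bbA^{m_\alpha}$, and crucially $D$ and $t_0$ do not depend on $J$ or on $n$. Ranging over $1\le i\le p+1$, $0\le j\le r_i$, and all ordered $i$-subsets $J$ (of which there are at most $(n+1)^i\le(n+1)^{p+1}$), I collect a family of at most $N_0:=t_0\big(\sum_{i=1}^{p+1}(r_i+1)\big)(n+1)^{p+1}$ affine hypersurfaces of degree $\le D$. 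Every member of $\mathcal{S}\cap X_\alpha$ is a finite intersection of members of this family, so a $0/1$-pattern on $\mathcal{S}\cap X_\alpha$ is determined by a $0/1$-pattern on the family, whence $N(X_\alpha,\mathcal{S}\cap X_\alpha)$ is at most the number of realizable $0/1$-patterns of these $\le N_0$ hypersurfaces on $X_\alpha$.

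Then I would apply Theorem~\ref{thm:Hilbert} to $X_\alpha\subset\bbA^{m_\alpha}$ with degree bound $D$, obtaining $N(X_\alpha,\mathcal{S}\cap X_\alpha)\le C(X_\alpha,D)\cdot N_0^{\dim X_\alpha}$. Since $\dim X_\alpha\le\dim X$, $n+1\ge1$, and $N_0$ is a fixed constant times $(n+1)^{p+1}$, this is $\le C_\alpha\cdot(n+1)^{(p+1)\dim X}$ with $C_\alpha$ depending only on $p$ and the data; summing over the finitely many $\alpha$ gives $N(X,\mathcal{S})\le C\cdot(n+1)^{(p+1)\dim X}$, as claimed. (For the passage to Theorem~\ref{thm:vcdensityapp}, one then observes, as in the remark after Lemma~\ref{lem:vc}, that the locally closed cells $\RR(\sigma)$, for $\sigma\in\Sigma(X,\mathcal{S})$, form a stratification of $X$ of length $N(X,\mathcal{S})$ refining each of the canonical stratifications $\mathcal{S}(\mathcal{Y}_{i,z^{(J)}})$.)

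The main obstacle — really the only step requiring care — is securing the uniform bounds $D$ and $t_0$ over all slices $Y_{i,j,z^{(J)}}$ and all $n$; this is exactly why one works with a fixed projective compactification of $Z$, so that ``restrict to $z^{(J)}$'' becomes ``substitute fixed homogeneous coordinates'', an operation that does not change the degree in the $X$-variables, rather than with an affine parametrization whose specializations might have unbounded degree. The remaining ingredients — the affine-chart reduction, the reduction from subschemes to hypersurfaces, the crude count $(n+1)^i\le(n+1)^{p+1}$ of ordered subsets, and the assembly of constants — are routine. Finally, note that for $i<p+1$ the slices over $\mathcal{Y}_i$ contribute only $(n+1)^{i\dim X}$ patterns, dominated by the $i=p+1$ term, so the exponent $(p+1)\dim X$ in the bound is governed entirely by the top filtration $\mathcal{Y}_{p+1}$.
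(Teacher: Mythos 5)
Your argument is correct in spirit and takes a genuinely different route from the paper's. The paper reduces to the affine case by choosing a finite affine open cover of $Z$ as well as of $X$, then encodes all of the $Y_{i,j}$ simultaneously into a single auxiliary correspondence $\widetilde{Y}_{\alpha,\bar\beta}\subset X_\alpha\times\widetilde{Z}_{\bar\beta}$ (where $\widetilde{Z}_{\bar\beta}$ is a disjoint union of the chosen affine charts of $Z^1,\ldots,Z^{p+1}$), and finally applies Lemma~\ref{lem:vc} to this single correspondence. You instead fix a projective embedding of $Z$, observe that each $Y_{i,j}\cap(X_\alpha\times Z^i)$ is cut out in $\bbA^{m_\alpha}\times(\bbP^\ell)^i$ by boundedly many equations of bounded $X$-degree, specialize the $Z$-coordinates, and invoke Theorem~\ref{thm:Hilbert} directly on the resulting family of $O((n+1)^{p+1})$ hypersurfaces. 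Your route is more direct and avoids the auxiliary-correspondence device entirely.

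Two caveats. First, you assume a \emph{closed} embedding $Z\hookrightarrow\bbP^\ell$. The lemma only assumes $Z$ is of finite type over $K$, so such an embedding need not exist. For quasi-projective $Z$ this is harmless (use a locally closed embedding and take closures of the $Y_{i,j}$ in $X\times\bar{Z}^i$; the fibers over points of $Z^i$ are unchanged since $Y_{i,j}$ is closed in the open $X\times Z^i$), but for general finite-type $Z$ one must fall back to an affine cover of $Z$ as in the paper's proof — which still gives a correct argument in your style (cover $Z$ by affines $Z_\beta\subset\bbA^{\ell_\beta}$, note the defining equations still have bounded $X$-degree after substituting numeric values of the $Z$-coordinates, and absorb the bounded sum over charts into the constant). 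Second, the heuristic you give for why one needs a projective compactification — that affine specializations ``might have unbounded degree'' — is not quite the right worry: substituting numeric values of affine $Z$-coordinates also leaves the $X$-degree unchanged. The genuine advantage of a global projective embedding is that it avoids the chart-bookkeeping (the sum over $\bar\beta$ in the paper's argument), not degree control. These do not affect the soundness of your argument once the projectivity hypothesis is either assumed or worked around.
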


\begin{proof}
We first reduce to the affine case as follows. Choose finite open affine coverings $(X_\alpha)_{\alpha \in A}, (Z_\beta)_{\beta \in B}$ of
$X,Z$ by affine subschemes using the fact that $X,Z$ are of finite type. For $\beta^{(i)}= (\beta_1,\ldots,\beta_i) \in B^i$, we denote by 
$Z_{\beta^{(i)}}$ the subscheme 
$Z_{\beta_1} \times \cdots \times Z_{\beta_i}$ of $Z^i$. Next, observe that  
\[
\left(Z_{\beta^{(i)}}\right)_{\beta^{(i)} \in B^i}
\]
is a finite affine covering of $Z^i$. 

For $\alpha \in A$,
$\beta^{(i)} = (\beta_1,\ldots,\beta_i) \in B^i$,
and any subscheme $Y$ of $X \times Z^i$, we denote by
$Y_{\alpha, \beta^{(i)}} = Y \cap (X_\alpha \times Z_{\beta^{(i)}})$. Then, for every $z^{[n]} \in Z^{n+1}$,
\begin{equation*}
    N(X,\mathcal{S}(z^{[n]}))  \leq \sum_{\substack{\alpha \in A, \bar{\beta}= (\beta^{(i)} \in B^{i})_{1 \leq i \leq p+1}}} N(X_\alpha, \mathcal{S}_{\alpha,\bar\beta}(z^{[n]})),
\end{equation*}
where 
\[
\mathcal{S}_{\alpha,\bar\beta}(z^{[n]}) = \left(Y_{i,j,\alpha,\beta^{(i)},z^{(J)}}\right)_{\substack{1 \leq i \leq p+1, 
0 \leq j \leq r_i,
J \in \binom{[n]}{i}}, z^{(J)} \in Z_{\beta^{(i)}}},
\]
and for $J \subset \binom{[n]}{j}$ we denote by $z^{(J)}$ the tuple
$(z_j)_{j \in J}$.
Since $A,B$ are finite sets, 
in order to finish the proof of the lemma
it suffices to prove that there exists for each $\alpha,\bar\beta$,
$C(\alpha,\bar{\beta})>0$ 
such that  for every $z^{([n])} \in Z^{n+1}$,
\begin{equation}
\label{eqn:prop:vcdensityapp:proof:1}
N(X_\alpha, \mathcal{S}_{\alpha,\bar\beta}(z^{[n]}))
\leq
C(\alpha,\bar{\beta}) \cdot (n+1)^{(p+1)\dim X_\alpha}.
\end{equation}
Note that $\dim(X_{\alpha}) \leq  \dim(X)$.
The lemma will follow by choosing
\[
C = \card(A) \cdot \card(B)^{p(p+1)/2} \cdot \max_{\alpha,\bar{\beta}} C(\alpha,\bar{\beta}).
\]

We now prove \eqref{eqn:prop:vcdensityapp:proof:1}.
Fix $\alpha \in A$ and  $\bar\beta = (\beta^{(i)} \in B^{i})_{1 \leq i \leq p+1}$.
Without loss of generality we can assume that each $Z_{\beta}, \beta \in B$
is a subscheme of $\mathbb{A}^\ell$ for some $\ell >0$.

We consider the embeddings 
$g_1:Z_{\beta^{(1)}} \hookrightarrow \mathbb{A}^{\ell}, g_2: Z_{\beta^{(2)}} \hookrightarrow \mathbb{A}^{2 \ell}, \ldots, g_{p+1}: Z^{\beta^{(p+1)}} \hookrightarrow \mathbb{A}^{(p+1)\ell}$, 
and let 
$f_i: \mathbb{A}^{i \ell} \hookrightarrow \mathbb{A}^\ell \times \cdots \times \mathbb{A}^{i \ell} \times \cdots \times \mathbb{A}^{(p+1)\ell} $
be the canonical embedding corresponding to the surjection
$k[X^{(1)},\ldots,X^{(p+1)}] \rightarrow k[X^{(i)}]$, where 
$X^{(i)}$ is a block of $i\cdot \ell$ indeterminates.

Finally, let 
\[
\widetilde{Z}_{\bar\beta,i} =  f_i \circ g_i (Z_{\beta^{(i)}}),
\]
and 
\[
\widetilde{Z}_{\bar\beta} = \bigcup_{1 \leq i \leq p+1} \widetilde{Z}_{\bar\beta,i}.
\]

Let $\widetilde{Y} \subset X_\alpha \times \widetilde{Z}_{\bar\beta}$ be the affine subscheme defined by
\[
\widetilde{Y}_{\alpha,\bar\beta} = \bigcup_{1 \leq i \leq p+1,0 \leq j \leq r_i} (\mathrm{Id}_X \times f_i\circ g_i) (Y_{i,j} \cap X_\alpha \times Z_{\beta^{(i)}}).
\]

For $\tilde{z} \in \widetilde{Z}_{\bar\beta}$, we denote by $\widetilde{Y}_{\alpha,\bar\beta,\tilde{z}}$ (as before)
the pull-back of the diagram
\[
\xymatrix{
& \widetilde{Y}_{\alpha,\bar\beta} \ar[d]_{\pi_{\widetilde{Z}_{\bar\beta}}} \\
\{\tilde{z}\} \ar[r] & \widetilde{Z}_{\bar\beta},
}
\]
where 
$\pi_{\widetilde{Z}_{\bar\beta}}:\widetilde{Y}_{\alpha,\bar\beta} \rightarrow \widetilde{Z}_{\bar\beta}$ is the restriction of the canonical projection
$X_\alpha \times \widetilde{Z}_{\bar\beta} \rightarrow \widetilde{Z}_{\bar\beta}$ to  $\widetilde{Y}_{\alpha,\beta}$. As before, identify $\widetilde{Y}_{\alpha,\bar\beta,\tilde{z}}$ with its canonical immersion in $X_\alpha$ and consider $\widetilde{Y}_{\alpha,\bar\beta,\tilde{z}}$ as a subscheme of $X_\alpha$.

It follows from Lemma~\ref{lem:vc} that there exists $C'>0$,
depending only on $\widetilde{Y}_{\alpha,\bar\beta} \hookrightarrow X_\alpha \times \widetilde{Z}_{\bar\beta}$,
such that for all $N>0$, and tuples $(\tilde{z}_1,\ldots,\tilde{z}_N) \in \widetilde{Z}_{\bar\beta}^{N}$, 
\begin{equation}
\label{eqn:prop:cvdensityapp:proof:2}
 N(X,\mathcal{S}') \leq C'\cdot N^{\dim X_\alpha},   
\end{equation} where
$\mathcal{S}' = (\widetilde{Y}_{\alpha,\bar\beta,\tilde{z}_i})_{1 \leq i \leq N}$.

We now apply inequality \eqref{eqn:prop:cvdensityapp:proof:2} with the tuple
\[
\mathcal{S}(z^{[n]}) = \left(\widetilde{Y}_{\alpha,\bar\beta,f_i \circ g_{\bar\beta,i}(z^{(J)})} \right)_{1 \leq i \leq p+1, J \in \binom{[n]}{i},z^{(J)} \in Z_{\beta^{(i)}}}
\]
noting that the length $N$ of the tuple $\mathcal{S}(z^{[n]})$
$
\leq  \sum_{i=1}^{p+1} \binom{n+1}{i}.
$
We obtain that
\begin{eqnarray*}
N(X,\mathcal{S}(z^{[n]}))  &\leq& C' \cdot N^{\dim X} \\
&\leq & C' \cdot \left( \sum_{i=1}^{p+1} \binom{n+1}{i}\right)^{\dim X}\\
&\leq & C' \cdot \left( \sum_{i=1}^{p+1} \left( (n+1)^i\right)\right) ^{\dim X} \\
&\leq& 
C' \cdot (p+1) \cdot n^{(p+1)\dim X} \\
&\leq & C(\alpha,\bar\beta) \cdot n^{(p+1)\dim X},
\end{eqnarray*}
where $C(\alpha,\bar\beta) = C' \cdot (p+1)$.

This completes the proof of the lemma.
\end{proof}

\begin{proof}[Proof of Theorem~\ref{thm:vcdensityapp}]
We follows the same notation as in Lemma~\ref{lem:vcdensityapp}.
Lemma~\ref{lem:vcdensityapp} implies that there exists $C > 0$
depending only on the filtrations  $\mathcal{Y}_1,\ldots,\mathcal{Y}_{p+1}$
such that for all $z^{[n]} \in Z^{[n]}$,
$N(X,\mathcal{S}(z^{[n]})) \leq C \cdot (n+1)^{(p+1)\dim X}$.
Now observe that 
\[
(\RR(\sigma))_{\sigma \in \Sigma(X,\mathcal{S}(z^{[n]}))}
\]
is a stratification of $X$ of length $N(X,\mathcal{S}(z^{[n]}))$.
\end{proof}

\subsection{O-minimal version}
In order prove our main theorems in the o-minimal setting, we will also need the o-minimal version of Theorem~\ref{thm:vcdensityapp}. We state and prove the required result in this section. Fix an o-minimal expansion of $\mathbb{R}$. Recall, we define stratifications for definable sets of this structure following the same lines as in the case of schemes (\ref{subsubsec:o-minimalsheaves}).  

In particular, a {\it stratification} of a definable set $X$ is a finite collection of locally closed (in the Euclidean topology) subsets  $S_i \subset X$ ($i \in  I$),
such that $X = \coprod S_i$. We refer to $|I|$ as the {\it length} of the stratification. We refer to the $S_i$ as strata (or stratum) and use the notation $\cS$ to denote the collection of strata $S_i$.
Suppose $X = X_0 \supsetneq X_1 \supsetneq X_2 \supsetneq \cdots \supsetneq X_n \neq \emptyset$ is a filtration of $X$ by closed subsets. Then we may associate a canonical stratification of length $n +1$. We set $S_i := X_{i} \setminus X_{i+1}$ for $0 \leq i < n$ and $S_n = X_n$. Note that each $S_i$ is locally closed. A {\it refinement} of a stratification $\cS$ is a stratification $\cS'$ such that each stratum $S_i \in \cS$ is a union of strata $S_i' \in \cS'$. 

Now let $X,Z$ be definable sets, and $p \geq 1$. For $1 \leq i \leq p+1$, 
let $\mathcal{Y}_i := (Y_{i,0},\ldots,Y_{i,r_i})$ be an $r_i$-tuple of closed definable sets  $Y_{i,j} \hookrightarrow X \times Z^i$, such that
\[
X\times Z^i = Y_{i,0} \supset Y_{i,1} \supset \cdots \supset Y_{i,r_i}.
\]

For $z^{(i)} \in Z^i$, and a definable subset 
$Y \subset X \times Z^i$, we will denote by $Y_{z^{(i)}}$
the pull-back of the diagram
\[
\xymatrix{
& Y \ar[d]_{\pi^{(i)}_{Z^i}} \\
\{z^{(i)}\} \ar[r] & Z^i,
}
\]
where 
$\pi^{(i)}_{Z^i}:Y \rightarrow Z^i$ is the restriction of the canonical projection
$X \times Z^i \rightarrow Z^i$ to  $Y$
(and identify $Y_{z^{(i)}}$  with its image in $X$).

\begin{theorem}
\label{thm:vcdensityapp:om}
For $1 \leq i \leq p+1$, 
let $\mathcal{Y}_i := (Y_{i,0} \supset \cdots \supset Y_{i,r_i})$, be a filtration of $X \times Z^i$ by closed definable sets.
Then there  exists a constant $C > 0$ depending on 
$\mathcal{Y}_1,\ldots,\mathcal{Y}_{p+1}$,
such that for all $n >0$, and
$z^{[n]} = (z_0,\ldots, z_n) \in Z^{n+1}$, there exists a stratification
of $X$ of length bounded by 
\[
C \cdot (n+1)^{(p+1)\dim X},
\]
which refines the stratifications of  $X$ which are canonically 
associated to the filtrations
\[
\left(\mathcal{Y}_{i,z^{(J)}} = (Y_{i,0,z^{(J)}} \supset \cdots \supset Y_{i,r_i,z^{(J)}})
\right)_{1 \leq i \leq p, z^{(J)} \in Z^{(J)}}.
\]
Here for each $i$, $J$ ranges over ordered subsets of $\{0,\ldots,n\}$ of cardinality $i$.
\end{theorem}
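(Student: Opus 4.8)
The plan is to run the proof of Theorem~\ref{thm:vcdensityapp} verbatim, with the single algebraic input (Theorem~\ref{thm:Hilbert}) replaced by its o-minimal counterpart: the polynomial bound on the combinatorial complexity of arrangements of members of a fixed definable family, established by Basu \cite{Basu10} (see also \cite{Starchenko-et-al}). Concretely, the black box I would invoke is the o-minimal analogue of Lemma~\ref{lem:vc}: if $W \subset X \times P$ is a closed definable set with $X$ of dimension $k$, then there is a constant $C = C(W) > 0$ such that for every $N>0$ and every $(w_1,\dots,w_N) \in P^N$, the number of realizable $0/1$-patterns of the family $(W_{w_1},\dots,W_{w_N})$ of subsets of $X$ is at most $C \cdot N^{k}$. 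Unlike in the scheme case, no reduction to an affine chart is needed here, since every definable set is by definition a subset of some $\bbR^m$.

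Next I would carry out the ``master family'' construction underlying Lemma~\ref{lem:vcdensityapp}. Fix definable embeddings $X \subset \bbR^m$ and $Z \subset \bbR^\ell$, so $Z^i \subset \bbR^{i\ell}$. Enumerate the finitely many pairs $(i,j)$ with $1 \le i \le p+1$, $0 \le j \le r_i$ as $(i_1,j_1),\dots,(i_{N_0},j_{N_0})$, where $N_0 = \sum_{i=1}^{p+1}(r_i+1)$. For each $s$ choose a definable embedding $\iota_s: Z^{i_s} \hookrightarrow \bbR^{(p+1)\ell}$ (the obvious coordinate inclusion followed by translation by a large vector depending on $s$) whose images $\widetilde{Z}_s := \iota_s(Z^{i_s})$ are pairwise disjoint, and set $\widetilde{Z} := \bigcup_s \widetilde{Z}_s$ and $\widetilde{Y} := \bigcup_s (\mathrm{id}_X \times \iota_s)(Y_{i_s,j_s}) \subset X \times \widetilde{Z}$. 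These are closed definable sets, and by construction the fiber of $\widetilde{Y}$ over the point $\iota_s(z^{(i_s)})$ is $Y_{i_s,j_s,z^{(i_s)}}$, identified with its image in $X$.

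Now fix $z^{[n]} = (z_0,\dots,z_n) \in Z^{n+1}$ and let $\mathcal{S}(z^{[n]})$ be the family of those fibers $\widetilde{Y}_{\tilde z}$ of the form $Y_{i,j,z^{(J)}}$ with $J$ an ordered $i$-element subset of $\{0,\dots,n\}$. Since there are at most $(n+1)^i$ such $J$ for each $i$, we get $\card(\mathcal{S}(z^{[n]})) \le \sum_{i=1}^{p+1}(r_i+1)(n+1)^i \le C_1 \cdot (n+1)^{p+1}$, with $C_1$ depending only on $p$ and the $r_i$. Applying the o-minimal arrangement bound to the definable family $\{\widetilde{Y}_{\tilde z}\}_{\tilde z \in \widetilde{Z}}$ yields
\[
\card(\Sigma(X,\mathcal{S}(z^{[n]}))) \le C \cdot \bigl(C_1 (n+1)^{p+1}\bigr)^{\dim X} = C_2 \cdot (n+1)^{(p+1)\dim X},
\]
with $C_2$ depending only on $\mathcal{Y}_1,\dots,\mathcal{Y}_{p+1}$ and $p$. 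As in the remark after Lemma~\ref{lem:vc}, the nonempty realizations $(\RR(\sigma))_{\sigma \in \Sigma(X,\mathcal{S}(z^{[n]}))}$ partition $X$ into locally closed definable sets — each $\RR(\sigma)$ being a finite intersection of the closed members of $\mathcal{S}(z^{[n]})$ with complements of such — hence form a stratification of $X$ of length $\le C_2 (n+1)^{(p+1)\dim X}$; and this stratification refines each $\cS(\mathcal{Y}_{i,z^{(J)}})$, because the pattern $\sigma$ records, for every $(i,J)$, the least $j$ with $x \notin Y_{i,j,z^{(J)}}$, i.e. which difference $Y_{i,j,z^{(J)}} \setminus Y_{i,j+1,z^{(J)}}$ (or $Y_{i,r_i,z^{(J)}}$) contains $x$.

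The only genuinely substantive ingredient is the o-minimal arrangement bound invoked at the outset; but this is an established result, used here as a black box in exactly the way Theorem~\ref{thm:Hilbert} is used in the scheme case, so I expect no real obstacle beyond the routine bookkeeping of the master-family construction and the refinement verification. One minor point to keep in mind is that the strata $\RR(\sigma)$ are only required to be locally closed and definable (the paper's notion of stratification), so no appeal to cell decomposition or Whitney conditions is needed at this stage.
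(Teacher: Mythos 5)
Your proposal follows essentially the same route as the paper: replace the Hilbert-function input of Theorem~\ref{thm:Hilbert} by the o-minimal $0/1$-pattern bound (Lemma~\ref{lem:vc:om}, quoted from \cite{Basu10}*{Theorem~2.2}), form a master definable family over a disjoint union of copies of the parameter spaces $Z^i$, and count patterns. You are in fact somewhat more explicit than the paper's terse treatment of Lemma~\ref{lem:vcdensityapp:om}, which elides the master-family construction and the $(r_i+1)$ multiplicity in the count, but these are the same ideas and the overall argument matches.
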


The proof of Theorem~\ref{thm:vcdensityapp:om} is similar to the proof of 
Theorem~\ref{thm:vcdensityapp} (in fact simpler since we do not have to reduce to the affine case).
We begin with some notation which mirrors the scheme case.

Let $X$ be a definable set and
and $\mathcal{S}$ a finite set of closed definable subsets of $X$. Given  $\sigma \in \{0,1\}^{\mathcal{S}}$, we set
\[
\RR(\sigma) := \{x \in X \mid \forall Y \in \mathcal{S},  x \in Y \; \Leftrightarrow \; \sigma(Y) =1\}. 
\]

We set
\[
\Sigma(X,\mathcal{S}) := \{\sigma \in \{0,1\}^{\mathcal{S}} \mid \RR(\sigma) \neq \emptyset\},
\]
and 
\[
N(X,\mathcal{S}) := \card(\Sigma(X,\mathcal{S})).
\]

Now, suppose that $X , Z$ be definable sets and  $Y \subset X \times Z$ 
a closed definable subset. For $z \in Z$, we denote by
$Y_z$ the pull-back of the diagram
\[
\xymatrix{
& Y \ar[d]_{\pi_Z} \\
\{z\} \ar[r] & Z,
}
\]
where 
$\pi_Z:Y \rightarrow Z$ is the restrictions of the canonical projection
$X \times Z \rightarrow Z$ to  $Y$.
We will identify $Y_z$ with its canonical immersion in $X$ (given by the 
restriction of the canonical projection $X \times Z \rightarrow X$ to
$Y_z$)
and consider $Y_z$ as a definable subset of $X$.

\begin{lemma}
\label{lem:vc:om}
There exists a constant $C > 0$ which depends only on  the embedding
$Y \hookrightarrow X \times Z$, such that for any $n >0$,  $z^{n} = (z_0,\ldots,z_n) \in Z^{n+1}$, 
\[
N(X,\mathcal{S}(z^{n})) \leq C \cdot (n+1)^{\dim X},
\]
where $\mathcal{S}(z^{n}) = (Y_{z_0},\ldots,Y_{z_n})$.
\end{lemma}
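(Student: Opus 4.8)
The plan is to read $N(X,\mathcal{S}(z^{n}))$ as the number of realizable $0/1$-patterns (sign conditions) of the finite subfamily $\{Y_{z_0},\dots,Y_{z_n}\}$ of the fixed definable family $\{Y_z\mid z\in Z\}$ of definable subsets of $X$, and then to quote the standard polynomial bound on this quantity in o-minimal structures. Concretely, for $\sigma\in\{0,1\}^{\{0,\dots,n\}}$ the set $\RR(\sigma)=\bigcap_{\sigma(j)=1}Y_{z_j}\cap\bigcap_{\sigma(j)=0}(X\setminus Y_{z_j})$ is definable, the sets $\RR(\sigma)$ are pairwise disjoint, and $N(X,\mathcal{S}(z^{n}))=\card(\Sigma(X,\mathcal{S}(z^{n})))$ is exactly the number of nonempty such cells, i.e. the number of cells in the partition of $X$ cut out by $Y_{z_0},\dots,Y_{z_n}$. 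If $z_i=z_j$ for some $i\neq j$ then $Y_{z_i}=Y_{z_j}$, so any $\sigma$ with $\sigma(i)\neq\sigma(j)$ fails to be realizable; hence allowing repetitions only decreases $N$, and we may assume $z_0,\dots,z_n$ are pairwise distinct.

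Next I would invoke the o-minimal combinatorial complexity estimate: for the fixed definable set $Y\subset X\times Z$ there is a constant $c=c(Y)>0$ such that for every $s>0$ and every $w_1,\dots,w_s\in Z$, the number of nonempty sign-condition cells of $Y_{w_1},\dots,Y_{w_s}$ inside $X$ is at most $c\cdot s^{\dim X}$ (see \cite{Basu10}; in the semialgebraic case this is the classical Oleinik--Petrovsky--Thom--Milnor/Warren-type bound on the number of realizable sign conditions on a variety). Equivalently, this says that the definable family $\{Y_z\}$ has VC-codensity at most $\dim X$ in the given o-minimal structure, with an explicit polynomial bound; it is obtained from an o-minimal cell decomposition of the total space $Y$ relative to the projection to $Z$, together with the fact that a cell decomposition of $X$ adapted to $s$ fibres can be chosen of size $O(s^{\dim X})$ --- the relevant exponent being $\dim X$, not the ambient dimension of $X$, because one works inside $X$ itself. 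Applying this with $s=n+1$ and $(w_1,\dots,w_{n+1})=(z_0,\dots,z_n)$ yields $N(X,\mathcal{S}(z^{n}))\le c\cdot(n+1)^{\dim X}$, so the lemma holds with $C=c$. Since $c$ depends only on a formula defining $Y\subset X\times Z$, it depends only on the embedding $Y\hookrightarrow X\times Z$, and in particular is independent of $n$ and of the tuple $z^{n}$, which is the uniformity asserted.

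The only genuine o-minimal input here is the polynomial (rather than exponential) dependence on $s$, with exponent $\dim X$; everything else is elementary bookkeeping. This estimate is the o-minimal counterpart of the role played by Theorem~\ref{thm:Hilbert} (and, ultimately, the Hilbert polynomial of a projective scheme) in the proof of Lemma~\ref{lem:vc}, and it is the step I expect to be the crux. With Lemma~\ref{lem:vc:om} in hand, the passage to Theorem~\ref{thm:vcdensityapp:om} --- reducing to a single fibred family, summing over the $O(n^i)$ ordered subsets $J$ of order $i\le p+1$, and converting the sign-condition count into a stratification of $X$ via $(\RR(\sigma))_{\sigma\in\Sigma}$ --- proceeds exactly as in Lemma~\ref{lem:vcdensityapp} and the proof of Theorem~\ref{thm:vcdensityapp}, and is in fact simpler here since no reduction to the affine case is needed.
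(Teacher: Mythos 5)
Your proposal is correct and takes essentially the same route as the paper: the paper's entire proof of Lemma~\ref{lem:vc:om} is the single citation to \cite{Basu10}*{Theorem 2.2}, which is exactly the o-minimal bound on the number of realizable $0/1$-patterns of a definable family (with exponent $\dim X$) that you invoke. Your surrounding remarks — that $N(X,\mathcal{S}(z^{n}))$ is a count of nonempty sign-condition cells, that repeated $z_i$'s only decrease the count, and that the constant depends only on a formula defining $Y\hookrightarrow X\times Z$ — are accurate bookkeeping and match the intended reading.
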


\begin{proof}
Follows from  \cite{Basu10}*{Theorem 2.2}.    
\end{proof}

In the following lemma we use the same notation as in Theorem~\ref{thm:vcdensityapp:om}.

\begin{lemma}
\label{lem:vcdensityapp:om}
    There exists a constant $C > 0$, which depends on $p$ and 
    the closed immersions $\phi_{i,j}: Y_{i,j} \hookrightarrow X \times Z^i, 1 \leq i \leq p+1$, having the following property:
    for each $n>0$, and  $z^{[n]} = (z_0,\ldots,z_n) \in Z^{n+1}$, 
   \[ 
   N(X,\mathcal{S}) \leq C \cdot (n+1)^{(p+1)\dim X},
   \]
   where
    \[
    \mathcal{S} = \left(Y_{i,j,z^{J}}\right)_{\substack{
    1 \leq i \leq p+1,
    0 \leq j \leq r_i,\\
    J \subset [0,n], 
    \card(J) = i
    }
    }
    \]
    and 
    for ordered subsets $J \subset [n]$,  $z^{(J)} = (z_j)_{j \in J}$.
\end{lemma}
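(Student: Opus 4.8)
The plan is to mirror the proof of Lemma~\ref{lem:vcdensityapp} but, as remarked above, to skip the reduction to the affine case entirely: in the o-minimal setting it is enough to repackage the finitely many filtrations $\mathcal{Y}_1,\ldots,\mathcal{Y}_{p+1}$ into a single correspondence and then invoke Lemma~\ref{lem:vc:om} once. Concretely, assume $Z\subset\bbR^{\ell}$, choose distinct real numbers $c_{i,j}$ for $1\leq i\leq p+1$ and $0\leq j\leq r_i$, embed each $Z^{i}$ into $\bbR^{(p+1)\ell}$ by padding coordinates with zeros, and form the definable set
\[
W:=\bigsqcup_{i=1}^{p+1}\ \bigsqcup_{j=0}^{r_i} Z^{i}\ \hookrightarrow\ \bbR^{(p+1)\ell+1},
\]
realized as the disjoint union of the copies $Z^{i}\times\{c_{i,j}\}$. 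First I would let $Y^{\ast}\subset X\times W$ be the closed definable subset whose restriction to the $(i,j)$-th copy of $Z^{i}$ is $Y_{i,j}\subset X\times Z^{i}$; since the copies are clopen in $W$ and each $Y_{i,j}$ is closed, $Y^{\ast}$ is a closed definable subset of $X\times W$ whose embedding depends only on the closed immersions $\phi_{i,j}$.

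Next I would identify the family $\mathcal{S}$ with a tuple of fibres of $Y^{\ast}$. Given $z^{[n]}=(z_0,\ldots,z_n)\in Z^{n+1}$, an index $i$ with $1\leq i\leq p+1$, an index $j$ with $0\leq j\leq r_i$, and an ordered subset $J\subset[0,n]$ with $\card(J)=i$, let $w_{i,j,J}\in W$ be the point of the $(i,j)$-th copy of $Z^{i}$ corresponding to $z^{(J)}=(z_k)_{k\in J}$. By construction $Y^{\ast}_{w_{i,j,J}}=Y_{i,j,z^{(J)}}$, so $\mathcal{S}$ is exactly the tuple $\big(Y^{\ast}_{w_{i,j,J}}\big)$. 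The number of points $w_{i,j,J}$ arising in this way is at most
\[
\sum_{i=1}^{p+1}(r_i+1)\,\frac{(n+1)!}{(n+1-i)!}\ \leq\ \Big((p+1)\max_{i}(r_i+1)\Big)(n+1)^{p+1}\ =:\ C_0\,(n+1)^{p+1},
\]
where $C_0$ depends only on $p$ and the filtrations.

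Finally I would apply Lemma~\ref{lem:vc:om} to the correspondence $Y^{\ast}\hookrightarrow X\times W$ and to the tuple of the $N\leq C_0(n+1)^{p+1}$ points $w_{i,j,J}$ (listed in any order): it produces a constant $C'>0$, depending only on the embedding $Y^{\ast}\hookrightarrow X\times W$, with
\[
N(X,\mathcal{S})\ \leq\ C'\cdot N^{\dim X}\ \leq\ C'\,C_0^{\dim X}\,(n+1)^{(p+1)\dim X}.
\]
Setting $C:=C'\,C_0^{\dim X}$, which depends only on $p$ and the $\phi_{i,j}$, finishes the proof. I do not expect any genuine obstacle here: the substance of the VC-codensity estimate has already been absorbed into Lemma~\ref{lem:vc:om} (which rests on \cite{Basu10}*{Theorem 2.2}), so what remains is bookkeeping — the only points needing a word of care are checking that the finite disjoint union $W$ and the subset $Y^{\ast}$ are genuinely definable and closed, and the crude fibre count above.
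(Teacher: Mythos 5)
Your proposal is correct and follows essentially the same route as the paper: package the finitely many filtrations into a single definable correspondence and invoke Lemma~\ref{lem:vc:om} once. The paper's written-out o-minimal proof is terse (it simply says to apply Lemma~\ref{lem:vc:om} to the tuple $\mathcal{S}(z^{[n]})$ without naming the correspondence), and your explicit construction of $Y^{\ast}\hookrightarrow X\times W$ via a disjoint union over the index pairs $(i,j)$ is exactly the bookkeeping that is being elided there — it is the o-minimal analogue of the $\widetilde{Y}_{\alpha,\bar\beta}\hookrightarrow X_\alpha\times\widetilde{Z}_{\bar\beta}$ built in the proof of Lemma~\ref{lem:vcdensityapp}, with the affine-cover reduction dropped as the paper remarks.
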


\begin{proof}

Apply inequality \eqref{lem:vc:om} with the tuple
\[
\mathcal{S}(z^{([n])}) = \left(Y_{i,j,z^{(J)}}\right)_{\substack{
    1 \leq i \leq p+1,
    0 \leq j \leq r_i,\\
    J \subset [0,n], 
    \card(J) = i
    }
    }
\]
noting that the length $N$ of the tuple $\mathcal{S}(z^{[n]})$
$
\leq  \sum_{i=1}^{p+1} \binom{n+1}{i}.
$
We obtain that
\begin{eqnarray*}
N(X,\mathcal{S}(z^{[n]}))  &\leq& C' \cdot N^{\dim X} \\
&\leq & C' \cdot \left( \sum_{i=1}^{p+1} \binom{n+1}{i}\right)^{\dim X}\\
&\leq & C' \cdot \left( \sum_{i=1}^{p+1} \left( (n+1)^i\right)\right) ^{\dim X} \\
&\leq& 
C' \cdot (p+1) \cdot n^{(p+1)\dim X} \\
&\leq & C \cdot n^{(p+1)\dim X},
\end{eqnarray*}
where $C = C' \cdot (p+1)$.

This completes the proof of the lemma.
\end{proof}

\begin{proof}[Proof of Theorem~\ref{thm:vcdensityapp:om}]
We follow the same notation as in Lemma~\ref{lem:vcdensityapp:om}.
Lemma~\ref{lem:vcdensityapp:om} implies that there exists $C > 0$
depending only on the filtrations  $\mathcal{Y}_1,\ldots,\mathcal{Y}_{p+1}$
such that for all $z^{[n]} \in Z^{n+1}$,
$N(X,\mathcal{S}(z^{[n]})) \leq C \cdot (n+1)^{(p+1)\dim X}$.
Now observe that 
\[
(\RR(\sigma))_{\sigma \in \Sigma(X,\mathcal{S}(z^{[n]}))}
\]
is a stratification of $X$ of length $N(X,\mathcal{S}(z^{[n]}))$. 
\end{proof}

\section{Proofs of Theorems~\ref{thm:informal} and \ref{thm:main}
}
\label{sec:proof:main:complex}
Before proceeding to the general setting of the main theorems, we consider some special cases. These cases are not needed for the general argument, but are included here in order to clarify some of the main ideas appearing in the proof of the Theorem. 

Below we work simultaneously in the following three settings:
\begin{enumerate}
\item The category of definable spaces and constructible sheaves in some o-minimal structure. Recall, we only work with o-minimal expansions of $\mathbb{R}$.
\item The category of schemes over the complex numbers, and constructible sheaves in the complex analytic topology.
\item The category of schemes over an arbitrary algebraically closed field $k$, and constructible ($\ell$-adic) sheave in the \'{e}tale topology (for some fixed prime $\ell$ prime to the characteristic of the field $k$).
\end{enumerate}
Since essentially the same argument (except for the proofs of the relevant bounds on number of strata proved in the previous section) works in all three settings, we will simply refer to our objects as spaces with the understanding that we are in one of the aforementioned settings. Moreover, we shall simply work with singular cohomology with rational coefficients with the understanding that these should be replaced with \'{e}tale cohomology in the third setting above.

\begin{remark}
Note that below we use the terminology of proper morphisms. In the o-minimal setting, $X$, $Y$, and $Z$ are in fact assumed to be compact, and so this is automatic. Similarly, the terminology closed immersion should be understood as closed immersion of schemes in the $\mathrm{ACF}$  case, and as a closed inclusion in the o-minimal setting.
\end{remark}

\subsection{The case when $n=0$}
In this case, the theorem asserts the existence of a constant $C$ (depending only on the correspondences $H$ and $\Lambda$) such that set of kernels 
$$
    \ker\left(
\mathrm{H}^p\left( \Lambda_{z},\bbQ\right)
    \longrightarrow
    \mathrm{H}^p\left( \leftindex_{x}{\Lambda}_{z},\bbQ\right)
    \right)
$$
is bounded by this constant. One can give a direct proof of this fact as follows.
\begin{proof}
Consider the induced morphisms $H \times_Y \Lambda \rightarrow Y$ and  $H \times \Lambda \rightarrow Z$. Here the second map is given by the composition of $H \times \Lambda \rightarrow \Lambda \rightarrow Z$. One has a cartesian diagram:
$$
\xymatrix{
H \times_Y \Lambda_z \ar[r] \ar[d]^{p_z} & H \times_Y \Lambda \ar[d]^p \\
X = X \times z \ar[r]  & X \times Z. }
$$
Here the right vertical is given by the composition $H \times_Y \Lambda \rightarrow H \times \Lambda \rightarrow X \times Z$ where the second map is the correspondence map on each component.
Let $\cS$ be a stratification of $X \times Z$ such that $Rp_*\bbQ$ is $\cS$-constructible. Let $C(\cS)$ be the number of elements in this partition. By definition, this constant only depends on the given correspondences. Note that for each $z \in Z$ one has an induced partition of $X$. While this partition varies with $z$, the number of elements in these induced partitions is bounded by $C$. We now make the following observations:
\begin{enumerate}
\item The morphism $p$ is proper. In order to see this, first note that $\Lambda \rightarrow Z$ is proper since it is the composition of a closed immersion followed by the projection $Y \times Z \rightarrow Z$. The latter is proper since we assume $Y$ are proper. Similarly, $H \rightarrow X$ is proper. It follows that $H \times \Lambda \rightarrow X \times Z$ is proper. Finally, note that $H \times_Y \Lambda \rightarrow H \times \Lambda$ is a closed immersion and therefore proper. 
\item Since the base change of a proper morphism is proper, it follows that $p_z$ is a proper morphism. In particular, the stalks of the sheaf $R^ip_{z,*}\bbQ$ at $x \in X$ are given by the cohomology of the fibers: $\rH^i(\leftindex_{x}{\Lambda}_z,\bbQ)$.
\item By proper base change, note that $Rp_{z,*}\bbQ \isom Rp_*\bbQ|_{X \times z}$ is constructible with respect to the partition of $X$ given by pulling back $\cS$ along $X \times \{z\} \hookrightarrow X \times Z$. We denote the resulting stratification of $X$ by $\cS_{z}$, and note that its length is bounded by $C$.
\end{enumerate}

 On the other hand, one has a natural map $H \times_Y \Lambda \rightarrow X \times \Lambda$, and in particular a morphism $f:H \times_Y \Lambda_z \rightarrow X \times \Lambda_z$. If $q_z: X \times \Lambda_z \rightarrow X$ denotes the natural projection map, then by \ref{lem:projectionisconstant} $R^iq_{z,*}\bbQ$ is a constant local system on $X$ given by $\mathrm{H}^i\left( \Lambda_{z},\bbQ\right)$. By functoriality, the commutative diagram
 \[
\xymatrix{
H \times_Y \Lambda_z \ar[rr]^{f}\ar[rd]^{p_z} && X \times \Lambda_z \ar[ld]^{q_z} \\
& X &
}
\]
induces a morphism $R^iq_{z,*}\bbQ \rightarrow R^ip_{z,*}\bbQ$ of $\cS_{z}$-constructible sheaves where the domain is a constant local system. For a given point $x \in X$, the induced morphism of stalks is just the usual pull-back map on cohomology:
$$\mathrm{H}^i\left( \Lambda_{z},\bbQ\right) \rightarrow \mathrm{H}^i\left( \leftindex_{x}{\Lambda}_{z},\bbQ\right).$$

On each stratum we obtain a morphism from a constant sheaf to a local system. By \ref{prop:kernelisindep} the restriction maps 
on cohomology 
$$ 
\mathrm{H}^i\left( \Lambda_{z},\bbQ\right)
    \longrightarrow
    \mathrm{H}^i\left( \leftindex_{x}{\Lambda}_{z},\bbQ\right)
    $$
    
    have the same kernel (namely the global sections of the kernel of the corresponding local systems) for all $x$ in a fixed stratum. In particular, the number of such sub-modules, as we vary $x \in X$, is bounded by $C$.

\end{proof}

\begin{remark}
We note that the aforementioned proof will also work for cohomology with coefficients in an commutative ring $R$.
\end{remark}

\subsection{The case when $n =1$}
In order to explicate our method, we consider the case $n=1$. In particular, we now consider points $(z_0,z_1) \in Z \times Z$. 

\begin{proof}
Let $Z_0: = (z_0,z_1) \in Z \times Z$. Let $\Lambda^{(2)} := \Lambda \times_Y \Lambda$. Note that this is equipped with a natural morphism to $Z \times Z$. Consider now the induced morphism $ p: H \times_Y \Lambda^{(2)} \rightarrow X \times Z \times Z$. Note that we have a diagram of 1-truncated simplicial spaces:
$$
\xymatrix{
H \times_Y \Lambda^{(2)} \ar[r]^{f_1} \ar@<-.5ex>[d] \ar@<.5ex>[d] & X \times Z \times Z \ar@<-.5ex>[d] \ar@<.5ex>[d]\\
H \times_Y \Lambda \ar[r]^{f_0} & X \times Z.}
$$
We denote the truncated simplicial space on the left by 
$H \times_Y \Lambda_{\bullet}$ and the one the right by $X \times Z_{\bullet}$.
Let $\cS_1$ be a stratification of $X \times Z \times Z$ such that $Rf_{1,*}\bbQ$ is constructible with respect to this stratification, and similarly let $\cS_0$ be a stratification of $X \times Z$ so that $Rf_{0,*}\bbQ$ is constructible with respect to $\cS_0$.
Given $(z_0,z_1) \in Z \times Z$, consider the simplicial space
$$
\xymatrix{
X \times \{(z_0,z_1)\} \ar@<-.5ex>[r] \ar@<.5ex>[r] & X \times \{z_0\} \coprod X \times \{z_1\}
}.
$$
This simplicial space is of course isomorphic to $X \rightrightarrows X \coprod X$. However, keeping track of the points in $Z$ gives a canonical map to the simplicial space $X \times Z_{\bullet}$.
We may base change the aforementioned diagram along this map to obtain the simplicial space (denoted $H\times_Y \Lambda_{(z_0,z_1),\bullet}$):
$$ 
\xymatrix{
H \times_Y (\Lambda_{z_0} \times_Y \Lambda_{z_1}) \ar@<-.5ex>[r] \ar@<.5ex>[r] &H \times_Y (\Lambda_{z_0} \coprod \Lambda_{z_1})}.
$$
Note that all the spaces above are augmented over $X$. Let $a_1: H \times_Y \Lambda^{(2)} \rightarrow X$
and $a_0: H \times_Y \Lambda \rightarrow X$ denote the corresponding augmentation maps. Recall, these are given by first projecting to $H$ and then considering the given map $H \rightarrow X$. We use the same notation for the corresponding augmentation maps for $H \times_Y (\Lambda_{z_0} \times_Y \Lambda_{z_1})$ and $H \times_Y (\Lambda_{z_0} \coprod \Lambda_{z_1})$. 
This gives rise, for each $(z_0,z_1)$, a spectral sequence $\rE(H\times_Y \Lambda_{(z_0,z_1),\bullet}/X, \bbQ)$:
$$\rE_1^{j,i} = R^ia_{j,*}\bbQ \Longrightarrow R^{i+j}a_{\bullet,*}\bbQ.$$
Here $a_{\bullet} $ is the augmentation map at the level of simplicial spaces. Since we are working with truncated simplicial spaces, this spectral sequence is concentrated in two columns. In particular, this spectral sequence degenerates at the 2nd page. We note that all terms are constructible sheaves on $X$. We make the following observations:
\begin{enumerate}
\item The stratification $\cS_0$ induces stratifications $\cS_{0,z_i}$ on 
$X$ via the embedding $X \times \{z_i\} \hookrightarrow X \times Z $, and similarly a stratification $\cS_{1,(z_0,z_1)}$ (via the embedding $X \times \{z_0,z_1\} \hookrightarrow X \times Z \times Z$).
\item Consider the augmentation $a_0: \coprod H \times_Y \Lambda_{z_i} \rightarrow \coprod X \times \{z_i\} \rightarrow X$, and let $a_{0,i}$ denote the two components of this morphism. The higher direct image is given by the direct sum $\oplus Ra_{0,i,*}\bbQ$. An easy conseqeunce of proper base change shows that the components are constructible along $\cS_{0,z_i}$ (respectively). For example, consider the cartesian diagram:
$$
\xymatrix{
H \times_Y \Lambda_{z_i}  \ar[d] \ar[r]& H \times_Y \Lambda \ar[d] \\
X \times {z_i} \ar[r] & X \times Z.}
$$
Since the right vertical arrow is proper, we may apply proper base change to conclude that 
$$ Ra_{0,i,*}\bbQ \isom Rf_{0,*}\bbQ|_{X \times {z_i}}.$$

\item Similarly, the higher direct image along the augmentation
$a_1: H \times_Y \Lambda^{(2})_{(z_0,z_1)} \rightarrow X \times (z_0,z_1) \rightarrow X$ is constructible along the stratification given by $\cS_{1,(z_0,z_1)}$. 
\item Now consider a refinement $\cS$ of these three stratifications as in Theorem \ref{thm:vcdensityapp}. It follows that all the terms on the second page of the spectral sequence above (and hence the abutments) are $\cS$-constructible.
\item We now apply the previous constructions to $H = X \times Y$, and look at the resulting spectral sequence. In this case, $H \times_Y \Lambda = X \times \Lambda$, $H \times_Y \Lambda^{(2)} = X \times \Lambda^{(2)}$,
the simplicial space $H \times_Y \Lambda_{(z_0,z_),\bullet} = 
X \times \Lambda_{(z_0,z_1),\bullet}$
and similarly for the base changed versions considered above. Since the augmentation maps are just projection maps to $X$, the direct images to $X$ are constant local systems on $X$ (by Lemma \ref{lem:projectionisconstant}. Moreover, the spectral sequence $\rE(X \times \Lambda_{(z_0,z_1),\bullet}/X,\bbQ)$ maps to the one considered above (by functoriality since $H \hookrightarrow X \times Y$). 
\item We conclude that the kernels of the resulting morphism of spectral sequences on the abutments are constant when restricted to a stratum of $\cS$. In order to see this note that the analogous assertion holds on each page of the spectral sequence since these are kernels of a constant local system mapping to a local system when restricted to a stratum (by Remark~\ref{rem:localsystemanalytic}). It follows that the same assertion holds on the resulting morphisms on $\rE_{\infty}$-terms, and therefore on abutments (since the category of constant local systems is closed under extensions). 
\item Finally, consider taking fiber along points of $x \in X$ of the abutments. By proper base change, we compute the cohomology of the simplicial space obtained by base changing to $x \in X$. For our initial simplicial space this is:
$$ \leftindex_x{H} \times_Y (\Lambda_{z_0} \times_Y \Lambda_{z_1}) \rightrightarrows \coprod \leftindex_x{H} \times_Y \Lambda_{z_i}.$$
We note that by cohomological descent the cohomology of the constant sheaf on this simplicial space is the cohomology of $\bigcup_i \leftindex_x{\Lambda}_{z_i}$. On the other hand, for case of $H = X \times Y$, the corresponding fibers are the cohomology of $\bigcup \Lambda_{z_i}$. Since the kernels are constant local systems on each stratum, it follows that the kernels of these restriction maps on the fibers at $x$ are the same for all $x$ in a stratum. In particular, the set of kernels is bounded by the length of $\cS$.

\end{enumerate}

This completes the proof. 
\end{proof}

\begin{remark}
We note that in the special case considered above, the cohomological degree does not play a role in the desired bound. This is since already the truncated simplicial space computes all cohomology groups. 
\end{remark}

\subsection{Proof in the general case}
The strategy of proof in general is similar to the case considered in the previous section except that in general our simplicial spaces are no longer $2$-truncated, and the spectral sequences intervening in the proof no longer degenerate at the second page. Fix $p \geq 0$. \\

We begin by first introducing the relevant simplicial objects. Let $z = (z_0,\ldots,z_n) \in Z^{n+1}$. We consider the following simplicial spaces:
\begin{enumerate}
\item $H \times_Y cosk(\Lambda/Y)$. Recall, $(H \times_Y cosk(\Lambda/Y))_k = H \times_Y (\Lambda \times_Y \cdots \times_Y \Lambda)$, where $\Lambda$ appears $k+1$ times in the product on the right. 
\item $X \times cosk(Z/*)$, where the $k$-th terms is $X \times Z \times \cdots \times Z$ with $k+1$ factors of $Z$.
\item Let $Z_0 := \coprod z_i$, and consider $cosk(Z_0/*)$. Note that there is natural (inclusion) morphism $$X \times cosk(Z_0/*) \hookrightarrow X \times cosk(Z/*).$$
\item Let $f_{\bullet}: H \times_Y cosk(\Lambda/Y) \rightarrow X \times cosk(Z/*)$ denote the map of simplicial spaces given by the composite of the morphisms 
$$H \times_Y cosk(\Lambda/Y) \rightarrow H \times cosk(\Lambda/Y) \rightarrow X \times cosk(Z/*),$$
where the second map is given by the maps $H \rightarrow X$ and $cosk(\Lambda/Y) \rightarrow cosk(Z/*)$. The latter map results from functoriality of the coskeleton. Since $Y$ is proper, the projection map $Y \times Z \rightarrow Z$ is also proper. Moreover, closed immersions are proper, and therefore $\Lambda \rightarrow Z$ is proper. It follows that the resulting map on coskeletons $cosk(\Lambda/*) \rightarrow cosk(Z/*)$ is a proper morphism of simplicial spaces.\footnote{A morphism of simplicial spaces $f_{\bullet}: \cX \rightarrow \cY$ is proper if each $f_n: \cX_n \rightarrow \cY_n$ is a proper morphism.} On the other hand, $cosk(\Lambda/Y) \hookrightarrow cosk(\Lambda/*)$ is a closed immersion. Since $H \rightarrow X$ is also proper (given that $Y$ is proper), it follows that $f_{\bullet}$ is a proper morphism of simplicial spaces.
\item We have the analogous construction $g_{\bullet}: X \times cosk(\Lambda/Y) \rightarrow X \times cosk(Z/*).$ Again, this is a proper morphism (it is simply the identity on the first component). We note that this construction can be obtained by setting $H = X \times Y$ in the preceding construction i.e. we take the identity correspondence.
\item It follows by functoriality that we have a natural commutative diagram of simplicial spaces:
$$
\xymatrix{
H \times_Y cosk(\Lambda/Y) \ar[r]^{\pi_{\bullet}} \ar[dr]^{f_{\bullet}} &X \times cosk(\Lambda/Y) \ar[d]^{g_{\bullet}} \\
   & X \times cosk(Z/*).
}
$$
The horizontal map is induced by the inclusion $H \hookrightarrow X \times Y$.
\item Consider $(z_0,\ldots,z_n) \in Z^{n+1}$. We may and do assume that the $z_i$ are distinct points. Let $Z_0:= \{z_0, \ldots, z_n\} \subset Z$ denote the corresponding closed subspace, and consider the coskeleton $cosk(Z_0/*)$. Recall, we can describe the $k$-th simplices as follows:
$$cosk(Z_0/*)_k = Z_0 \times \cdots \times Z_0 = \coprod_{j \subset \{0,\ldots,n\}^k} (z_{j_1},\ldots,z_{j_k})$$
where $j = (j_1,\cdots,j_k)$. By functoriality, there is a natural closed immersion
$X \times cosk(Z_0/*) \hookrightarrow X \times cosk(Z/*)$. For each level $k$, it is just given by inclusion of the points.
\item Consider now the higher direct images $Rf_{i,*}\bbQ$ on $X \times Z^{i+1}$ for all $0 \leq i \leq p$ and choose stratifications $\cS_i$ of $X \times Z^{i+1}$ so tha $Rf_{i,*}\bbQ$ is constructible with respect to $\cS_i$. We now choose as stratification $\cS$ of $X$ as in Theorems \ref{thm:vcdensityapp:om} (in the o-minimal case) and \ref{thm:vcdensityapp} (in the complex analytic and \`{e}tale cases). 
We note that the length of this stratification is bounded by 
\[
C \cdot (n+1)^{(p+1) \cdot \dim(X)}
\]
where $C$ only depends on the (locally closed) immersions $\cS_{i,j} \hookrightarrow X \times Z^i$ and $p$. Here $\cS_{i,j}$ are the strata of $\cS_i$. We note that we can always find stratifications $\cS_i$ so that it is a stratification associated to a filtration. Note that these in turn only depend on the correspondences $[H;X,Y]$ and $[\Lambda;Y,Z]$. 
\item We consider the commutative diagram of simplicial spaces above base-changed to $Z_0$:
$$
\xymatrix{
H \times_Y \cosk(\Lambda_{Z_0}/Y) \ar[r] \ar[dr] & X \times \cosk(\Lambda_{Z_0}/Y) \ar[d] \\
&  X \times \cosk(Z_0/*)
}
$$
Note that these are all augmented over $X$ via the natural projection maps. In particular, we have a commutative diagram of augmented simplicial spaces:
$$
\xymatrix{
H \times_Y \cosk(\Lambda_{Z_0}/Y) \ar[r] \ar[dr]^{b_{\bullet}} & X \times \cosk(\Lambda_{Z_0}/Y) \ar[d]^{a_{\bullet}} \\
&  X}
$$
Let $\cX := H \times_Y \cosk(\Lambda_{Z_0}/Y)$ and $\cY:= X \times \cosk(\Lambda_{Z_0}/Y)$.
The diagram above gives a morphism of spectral sequences:
$$\rE(\cY/X,\bbQ) \rightarrow \rE(\cX/X,\bbQ).$$
Note that $Rb_{k,*}\bbQ$ is $\cS$-constructible for all $k \leq p$. To see this, note that by definition one has a cartesian diagram:
$$
\xymatrix{
\cX  \ar[r] \ar[d] & H \times_Y \cosk(\Lambda_{Z}/Y) \ar[d] \\
X \ar[r]^{\iota_{Z_0}} & X \times Z 
}
$$
The bottom horizontal is given by the inclusion $Z_0 \hookrightarrow Z$. Moreover, as noted above, the right vertical is a proper morphism. It follows that the left vertical is also a proper morphism. The analogous assertions are also true for each simplicial degree. Therefore, by proper base change, $Rb_{k,*}\bbQ = \iota^{*}_{Z_{0}}Rf_{k,*}\bbQ$. In particular, $Rb_{k,*}\bbQ$ is $\cS$-constructible for all $0 \leq k \leq p$. We note that same argument applies to show that $Ra_{k,*}\bbQ$ is $\cS$-constructible for all $0 \leq k \leq p$. Moreover, by Lemma \ref{lem:projectionisconstant} in the complex setting and the analogous assertions in \ref{subsec:constructibleetale}, \ref{subsubsec:o-minimalsheaves} for the \'{e}tale and o-minimal settings, $Ra_{k,*}\bbQ$ is a constant sheaf. Similarly, by Lemma \ref{lem:projectionisconstantsimplicial} and Remark \ref{rem:projectionisconstantinetale}, $Ra_{\bullet,*}\bbQ$ is a constant sheaf with stalk at $x \in X$ given by $R\Gamma(\cosk(\Lambda_{Z_0}/Y),\bbQ)$. Let $\cK_i := ker(R^ia_{\bullet,*}\bbQ \rightarrow R^ib_{\bullet,*}\bbQ)$. By Theorem \ref{thm:spectralseqinput}, $\cK_i$ is $\cS$-constructible which is constant on each stratum, for all $0 \leq i \leq p$.

\item On the other hand, by cohomological descent (Lemma \ref{lem:corrcohomdescent}),
\[
\rH^i(\cosk(\Lambda_{Z_0}/Y),\bbQ) = \rH^i(\bigcup_{j=0}^{n} \Lambda_{z_j},\bbQ).
\]
Similarly, by proper base change and cohomological descent, we note that 
$$
(R^ib_{\bullet,*}\bbQ)_x = \rH^{i}(\leftindex_{x}{H} \times_Y \cosk(\Lambda_{Z_0}/Y),\bbQ) = \mathrm{H}^i\left(\bigcup_{j=0}^{n} \leftindex_{x}{\Lambda}_{z_{j}},\bbQ\right)
$$
It follows that at the level of stalks, the morphism 
$$R^pa_{\bullet,*}\bbQ \rightarrow R^pb_{\bullet,*}\bbQ$$
is given by the restriction map on cohomology:
$$
\mathrm{H}^p\left(\bigcup_{j=0}^{n} \Lambda_{z_{j}},\bbQ\right)
    \longrightarrow
    \mathrm{H}^p\left(\bigcup_{j=0}^{n} \leftindex_{x}{\Lambda}_{z_{j}},\bbQ\right)
$$
In particular, the kernel of this map is given by the stalk $\cK_{i,x}$. On the other hand, if we restrict to a stratum, then we have a morphism from a constant local system to a local system. We may apply Proposition \ref{prop:kernelisindep} to conclude that the kernel of the aforementioned restriction map is independent of $x$ in a given stratum. It follows that the number of such kernels is bounded by the number of strata. 
This concludes the proof of the theorem.
\end{enumerate}

\begin{proof}[Proof of Theorem~\ref{thm:informal}]
For $\mathbf{T}$ an o-minimal expansion of
$\mathbb{R}$, $\mathrm{ACF}(0)$, or $\mathrm{ACF}(p)$,
the theorem follows from 
the o-minimal, complex and the \'{e}tale versions of Theorem~\ref{thm:main} respectively.
In the case of $\mathbf{T} = \mathrm{RCF}$, the theorem follows
from 
the o-minimal version of Theorem~\ref{thm:main}
using a standard argument (that we omit) 
involving effective semi-algebraic triangulations 
and the Tarski-Seidenberg transfer principle.
\end{proof}

\section{Tightness}
\label{sec:tightness}
In this section, we give some examples regarding the tightness of the bound in 
Theorem~\ref{thm:main} (complex version).

\begin{example}(Case $p=0$)
We first consider the case $p=0$ where the tightness is well known. For simplicity, we suppose $K =\bbC$, but the construction below is valid in also in the \'{e}tale setting. Let
\[
X = Y = Z = \bbP^m.
\]
Let $H \subset X \times Y = \bbP^m \times \bbP^m$ be defined by 
$X_0 Y_0 + \cdots + X_m Y_m = 0$ (in terms of homogeneous coordinates on $X$ and $Y$), and 
$\Lambda \subset Y \times Z$ be the diagonal subvariety.

Then for each $x \in X$, $\leftindex_x \Lambda$ is a hyperplane $H_x\subset Y$, and for $Z_0 \subset_{n+1}  Z$, the kernel of the homomorphism
\[
\HH^0(\Lambda_{Z_0},\bbQ) \rightarrow \HH^0(\leftindex_x \Lambda_{Z_0},\bbQ) 
\]
is determined by the subset $Z_0 \cap H_x$ of $Z_0$. More precisely, $\Lambda_{Z_0} = Z_0$ and the left hand side is a vector space with basis $Z_0$, and the right hand side is a vector space with basis $Z_0 \cap H_x$. For generic subsets
$Z_0 \subset_n Z$, 
\[
\card(\{Z_0 \cap H_x \;\mid\; x \in X\}) = 
\sum_{i=0}^{m} \binom{n+1}{i}
\geq \left(\frac{n+1}{m}\right)^m = \frac{1}{m^m} (n+1)^{(p+1)\dim X}
\]
noting that in the current setting $p=0, \dim X = m$. 

In order to see this bound geometrically, it is more convenient to use projective duality and consider the set
\[
\mathcal{H} = \{H_0,\ldots,H_n\} \subset \bbP^m
\]
of 
hyperplanes dual to $z_0,\ldots,z_n$. The set $\{Z_0 \cap H_x \;\mid\; x \in X\}$ is in natural bijection with 
$\Sigma(\bbP^m,\mathcal{H})$, and for each 
$\sigma \in \Sigma(\bbP^m,\mathcal{H})$,
$\RR(\sigma)$ is the intersection of some $i$ hyperplanes in $\mathcal{H}$, where
$0 \leq i \leq m$, and with the complements of the rest of the hyperplanes. For generic
hyperplanes $H_0,\ldots,H_n$, it follows from Bertini's theorem
that 
\[
\Sigma(\bbP^m,\mathcal{H}) = 
\sum_{i=0}^{m} \binom{n+1}{i}
\geq \left(\frac{n+1}{m}\right)^m = \frac{1}{m^m} (n+1)^{(p+1)\dim X}.
\]
\end{example}

In higher degrees (i.e. for $p > 0$), 
the construction of examples to prove tightness
is a little more subtle. For simplicity, we only consider the case $p=1$ and $\dim X = 1$ below, but similar examples can be constructed for all $p$, and higher dimensions of $X$ as well.

\begin{example}
In the following example, $p=1, \dim X = 1$. We assume $K = \bbC$, but the example below is equally valid for arbitrary $K$ and singular cohomology replaced with \'{e}tale cohomology. Let 
\begin{eqnarray*}
 X &=& \bbP^1, \\
 Y &=& \mathbb{P}^2 \times \mathbb{P}^2, \\
 Z &=& \check{\bbP^2} \times \check{\bbP^2},
\end{eqnarray*}
where $\check{\bbP^2}$ is the variety of lines in $\bbP^2$ (i.e. the corresponding dual projective space).
 We let $\Lambda \subset Y \times Z, H \subset X \times Y$ be defined by
 \[
 \Lambda = \{((y^{(1)},y^{(2)}),(\ell_1,\ell_2)) \mid (\ell_1, \ell_2) \in Z, (y^{(1)},y^{(2)}) \in Y, y^{(1)} \in \ell_1, y^{(2)} \in \ell_2\},
 \]
 \[
 H = \{(x,y_1,y_2)  \mid x \in X, (y_1,y_2) \in Y,  X_0Y_{1,1} + X_1 Y_{1,2} = 0\},
 \]
 where $[X_0:X_1]$ and $[Y_{1,0}:Y_{1,1}:Y_{1,2}]$ are homomogeneous 
 coordinates of $\bbP^1$ and $\bbP^2$ corresponding to the first factor in $Y$ respectively. In particular, note that for each fixed $y^{(2)} \in \bbP^2$, each point in $ (\bbP^2 - [1:0:0]) \times \{y^{(2)}\}$, belongs to $H_x$ for a unique $x \in X$, and the set of points in 
$  \bbP^2 \times \{y^{(2)} \}$ which belong to $H_x$ equals
$L_x  \times \{y_2\}$ for a line $L_x \subset \bbP^2$ containing $[1:0:0]$. In particular, $L_x, x \in X$ is a pencil of lines through the 
point $[1:0:0]$.

Now let $L_1,\ldots,L_n \subset \bbP^2$ be distinct lines such that
no three of them intersect, and
 for $i\neq j$ denote by $p_{i,j} \in \bbP^2$ the unique point of intersection of $L_i$ and $L_j$. We will assume that the lines
 $L_1,\ldots,L_n$ are chosen
 such that the point $[1:0:0]$ does not belong to any of the lines joining $p_{i,j}$ and $p_{i',j'}$ where $(i,j) \neq (i',j')$.
 This ensures that for every $(i,j) \neq (i',j')$, the line $L$ joining
 $p_{i,j}$ and $p_{i',j'}$ does not belong to the pencil
 $\{L_x \;\mid\; x \in X\}$.

 Finally, let $\mathcal{M} = \{M_1,M_1',\ldots,M_n,M_n'\}$  be $2n$ distinct lines in $\bbP^2$ no three meeting at a point 
 , and let
 \[
 Z_0 = \{(L_1,M_1), (L_1,M_1'),\ldots,(L_n,M_n),(L_n,M_n')\} \subset_{2n} Z = \check{\bbP^2} \times \check{\bbP^2}.
 \]
For each $i\neq j$, let $x_{i,j} \in X$ be the unique point such that
$\{x_{i,j}\} \times \{p_{i,j}\} \times \mathbb{P}^2 \subset H_{x_{i,j}}$.

Then, the kernels of the homomorphisms 
\[
\HH^1(\Lambda_{Z_0},\bbQ) \rightarrow \HH^1(\leftindex_x\Lambda_{Z_0},\bbQ)
\]
are distinct
for each $x = x_{i,j}, 1 \leq i \neq j \leq n$.

To see observe the following. 
Let $\pi_1: Y =\bbP^2 \times \bbP^2 \rightarrow \bbP^2$
(respectively $\pi_2: Y =\bbP^2 \times \bbP^2 \rightarrow \bbP^2$)
be the projection to the first (respectively second) factor. 
Then, for all $y^{(2)} \in \bigcup_{i=1}^{n} (M_i \cup M_i')$, 
\[
\pi_2^{-1}(y^{(2)}) \cap \Lambda =
\begin{cases}
\text{ $(L_i \cup L_j) \times \{y^{(2)}\}$ if $y^{(2)} \in M_i \cap M_j, M_i' \cap M_j', M_i \cap M_j'$}, \\
\text{ $L_i \times \{y^{(2)}\}$ if $y^{(2)}$ belongs to exactly one line $M_i$ or $M_i'$ in $\mathcal{M}$}.
\end{cases}
\] 
Using the fact 
that $\pi_2$ is a proper map, the fact that $\HH^1(L_i,\bbQ) = \HH^1(L_i \cup L_j,\bbQ)=  0$ and the 
Vietoris-Begle theorem we obtain that,
\[
\HH^1(\Lambda_{Z_0},\bbQ) \cong \HH^1(\bigcup_{M \in \mathcal{M}} M, \bbQ).
\]

Now for $y^{(1)}  \in \bbP^2$ and $x \in X, x \neq x_{i,j}, i \neq j$, 
\[
\pi_1^{-1}(y^{(1)}) \cap \leftindex_x\Lambda_{Z_0} = 
\begin{cases}
    \{y^{(1)}\} \times (M_i \cup M_i') \text{ if $y^{(1)} \in L_x \cap L_i$}, \\
    \text{ empty otherwise}.
\end{cases}
\]

This shows that
for $x \in X, x \neq x_{i,j}, i \neq j$, 
$\leftindex_x\Lambda_{Z_0}$ 
\[
\HH^1(\leftindex_x\Lambda_{Z_0},\bbQ)  
\cong \bigoplus_{i=1}^{n}  \HH^1(M_i \cup M_i',\bbQ) =
0.
\]

On the other hand , if $x = x_{i,j}$ for some $i \neq j$,
\[
\pi_1^{-1}(y^{(1)}) \cap \leftindex_x\Lambda_{Z_0} = 
\begin{cases}
    \{y^{(1)}\} \times (M_i \cup M_i' \cup M_j \cup M_j') \text{ if $y^{(1)} = p_{i,j} \in  L_x$}, \\
    \{y^{(1)}\} \times (M_k \cup M_k') \text{ if $y^{(1)} \in L_x \cap L_k, k \neq i,j$}, \\
    \text{ empty otherwise}.
\end{cases}
\]
This implies for $x = x_{i,j}$
\[
\HH^1(\leftindex_x\Lambda_{Z_0},\bbQ) 
\cong
\oplus_{k\neq i,j} \HH^1(M_k \cup M_k',\bbQ) \oplus \HH^1(M_i\cup M_i' \cup M_j \cup M_j',\bbQ) = 
\HH^1(\bigcup_{M \in \mathcal{M}_{i,j}} M ,\bbQ),
\]
where $\mathcal{M}_{i,j} = \{M_i,M_i',M_j,M_j'\}$, 
and we have a commutative diagram
\[
\xymatrix{
\HH^1(\Lambda_{Z_0},\bbQ) \ar[r] \ar[d] &
\HH^1(\leftindex_{x_{i,j}}\Lambda_{Z_0},\bbQ) \ar[d] \\
\HH^1(\bigcup_{M \in \mathcal{M}} M ,\bbQ) \ar[r]&
\HH^1(\bigcup_{M \in \mathcal{M}_{i,j}} M ,\bbQ).
}
\]

In particular, this shows that 
\[
\ker(\HH^1(\Lambda_{Z_0},\bbQ) \rightarrow \HH^1(\leftindex_{x_{i,j}}\Lambda_{Z_0},\bbQ))
\neq 
\ker(\HH^1(\Lambda_{Z_0},\bbQ) \rightarrow \HH^1(\leftindex_{x_{i',j'}}\Lambda_{Z_0},\bbQ))
\]
whenever $\{i,j\} \neq \{i',j'\}$, which implies that 
the number of distinct kernels
\[
\ker(\HH^1(\Lambda_{Z_0},\bbQ) \rightarrow \HH^1(\leftindex_{x}\Lambda_{Z_0},\bbQ))
\]
as $x$ varies over $X$ is at least
\[
\binom{n}{2} \geq \frac{1}{4}n^2 \geq  \frac{1}{16} (2 n) ^{(p+1)\dim X},
\]
noting that $\card(Z_0) = 2n$, $p=1$ and $\dim X = 1$ in the above example.  
\end{example}

The example given above is also valid over $k = \mathbb{R}$ (and so in particular, for o-minimal expansions of $\mathbb{R}$ as well), though the
calculations are a bit different since $\bbP^1$ is not simply connected over $\bbR$. In the o-minimal case, one can construct simpler examples using inequalities.

\begin{example}
Let 
$a,b >0$ with $a,b \in \mathbb{R}$ incommensurable (i.e. $\frac{a}{b} \notin \bbQ$).
\begin{eqnarray*}
X &=& [-(a+b),(a+b)], \\
Y &=& [0,1\ \times [0,1], \\
Z &=& \{\ell \;\mid\; \ell \text{ a line in } \bbR^2, \ell \cap Y \neq \emptyset\}.
\end{eqnarray*}

Let
$\Lambda \subset Y \times Z, H \subset X \times Y $ be defined by
\begin{eqnarray*}
\Lambda &=& \{ (y,\ell) \;\mid\; y \in \ell \}, \\
H &=& \{(x,y) \in X \times Y \mid  a \cdot y_1 + b \cdot y_2  \leq x \}.
\end{eqnarray*}

Let $Z_0 = \{\ell_1,\ldots,\ell_{2n}\} \subset Z$ be a union of $n$ horizontal and $n$ vertical lines defined over $\bbQ$.
Observe that
each of the lines defined by the equation 
$a \cdot Y_1 + b \cdot Y_2  = x, x \in \bbR$ do not contain more than one of the points of intersections between a horizontal and a vertical line in $Z_0$ (since we assumed that $a/b \not\in \bbQ$).

It is easy to see that 
\[
\HH^1(\bigcup_{1\leq i \leq 2n} \Lambda_{\ell_i},\bbQ) \cong \bbQ^{(n-1)^2},
\]
and 
for each $x \in X$, the homomorphism 
\[
\HH^1(\bigcup_{1\leq i \leq 2n} \Lambda_{\ell_i},\bbQ) \rightarrow 
\HH^1(\bigcup_{1\leq i \leq 2n} \leftindex_{x} \Lambda_{\ell_i},\bbQ)
\]
is surjective, and 
as $x$ varies over $X$, 
the dimension of $\HH^1(\bigcup_{1\leq i \leq 2n} \leftindex_{x} \Lambda_{\ell_i},\bbQ)$ takes all values between $0$ and $(n-1)^2$
because of the observation in the preceding paragraph.
\end{example}

\section{Proofs of 
Theorems~\ref{thm:informal:general} and \ref{thm:main:general}}

\label{sec:proof:main:general}
We prove a slightly more precise version of the theorem.
\begin{theorem}[Precise version]
\label{thm:main:complex:general'}
Let $p \geq 0$,
and 
\[
[\Lambda_{-1};Y_{-1},Z_{-1}], [\Lambda_0;Y_0,Z_0], \ldots, 
[\Lambda_{q-1};Y_{q-1},Z_{q-1}], [H;X,Y_{-1} \times \cdots \times Y_{q-1}]
\]
be correspondences with $\Lambda_i \subset Y_i \times Z_i, -1 \leq i \leq q-1$.
Then there exist, $C > 0$, and  a correspondence  $[\Gamma;X,V]$ with
$\Gamma \subset X \times V$ (depending only on the correspondences $\Lambda_{-1},\ldots,\Lambda_{q-1},H$)
such that for all 
$z_{-1} \in Z_{-1}$, and $Z_i' \subset_n Z_i, 0 \leq i \leq q-1$,
there exist closed points $v_1,\ldots,v_N \in V$, 
with $N \leq  C \cdot n^{p+q}$,
such that for each 
$0/1$-pattern $\sigma$ on the set of sub-schemes $\{\Gamma_{v_1},\ldots,\Gamma_{v_N}\}$, where each $\Gamma_{v_i}  \subset X$,
\[
\ker\left(
\HH^p\left({\bar{\Lambda}}_{\{z_{-1}\} \times \bar{Z}'},\bbQ\right)
    \longrightarrow
    \HH^p\left(\leftindex_{x}{\bar{\Lambda}}_{\{z_{-1}\} \times \bar{Z}'},\bbQ\right)
    \right)
\]
(where $\bar{Z}' = Z_0' \times \cdots \times Z_{q-1}'$)
is constant as $x$ varies over $\RR(\sigma)\subset X$.
\end{theorem}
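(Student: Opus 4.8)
The plan is to argue by induction on $q$, using the correspondence $[\Lambda_{-1};Y_{-1},Z_{-1}]$ and the fixed point $z_{-1}$ as an accumulator that records the factors already processed: fixing $z_{-1}$ replaces the $Y_{-1}$-slot by the subscheme $\leftindex_{z_{-1}}{\Lambda_{-1}}\subset Y_{-1}$ (a finite set of points, since the correspondences are closed immersions with finite fibres, hence proper), along which $H$ is restricted. The base case $q=1$ is precisely what the proof of Theorem~\ref{thm:main:complex} in Section~\ref{sec:proof:main:complex} produces: a stratification of $X$ of length $\le C(n+1)^{(p+1)\dim X}$ realized as the partition of $X$ into the sets $\RR(\sigma)$ for the realizable $0/1$-patterns $\sigma$ of a family of closed subschemes of $X$ pulled back from fixed filtrations of the $X\times Z^{i}$, on whose strata the kernel sheaf is constant. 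For the induction it is convenient to carry the slightly stronger conclusion that on each $\RR(\sigma)$ all the direct-image sheaves $R^{\ell}b_{\bullet,*}\bbQ$ intervening in the argument, not merely their relevant kernel, are constant; this is what the base case supplies and the inductive step consumes.

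For the step $q-1\Rightarrow q$ I would resolve the first factor. Since $\bigcup_{z_0'\in Z_0'}\Lambda_{0,z_0'}$ is the image of the proper surjection $\coprod_{z_0'\in Z_0'}\Lambda_{0,z_0'}$ onto it, cohomological descent (Lemma~\ref{lem:corrcohomdescent}) expresses $\HH^p(\bar{\Lambda}_{\{z_{-1}\}\times\bar{Z}'},\bbQ)$, and stalkwise at $x\in X$ the corresponding group for $\leftindex_{x}{\bar{\Lambda}}$, as the abutment of the spectral sequence of the augmented simplicial scheme $k_0\mapsto\cosk(\Lambda_{0,Z_0'}/Y_0)_{k_0}\times\leftindex_{z_{-1}}{\Lambda_{-1}}\times\prod_{i=1}^{q-1}(\bigcup_{z_i'\in Z_i'}\Lambda_{i,z_i'})$, fibered over $Y_{-1}\times\cdots\times Y_{q-1}$ with $H$, respectively with the identity correspondence. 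Only the columns $k_0\le p$ contribute to degree $p$, and for each such $k_0$ the term $\cosk(\Lambda_0/Y_0)_{k_0}$ is a closed subscheme of $Y_0\times Z_0^{k_0+1}$; fibered with the rest it is precisely an instance of the $(q-1)$-variable problem, with the coskeletal datum absorbed into the accumulator ($Y_{-1}\leadsto Y_{-1}\times Y_0$, $Z_{-1}\leadsto Z_{-1}\times Z_0^{k_0+1}$) and with $z_{-1}$ replaced by $(z_{-1},\tau)$ for $\tau$ an ordered $(k_0+1)$-tuple drawn from $Z_0'$, such tuples being among the points already quantified over in the $(q-1)$-statement. Applying the inductive hypothesis for each $(k_0,\tau)$ gives a stratification of $X$, and Theorem~\ref{thm:spectralseqinput} propagates constancy-on-strata from the $\rE_1$-terms of the outer spectral sequence --- whose source terms are constant by Lemmas~\ref{lem:projectionisconstant} and \ref{lem:projectionisconstantsimplicial} --- to the abutment, hence to the kernel $\cK_p$. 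A common refinement of all these stratifications of $X$, together with the fixed ones attached to the correspondence data, is produced by the evident product-version of Theorem~\ref{thm:vcdensityapp}: the subschemes of $X$ involved live over $X\times\prod_iZ_i^{a_i}$ with $a_i\ge1$ and $\sum_ia_i=q+\sum_ik_i\le p+q$, so choosing $n$ points in each $Z_i$ yields $\sum_{\bar{a}}\prod_i\binom{n}{a_i}=O(n^{p+q})$ of them; Theorem~\ref{thm:Hilbert} then bounds the number of realizable patterns by $O(n^{(p+q)\dim X})$. Re-packaging this family of subschemes of $X$ as a single correspondence $[\Gamma;X,V]$, with $V$ the fixed disjoint union of the products $\prod_iZ_i^{a_i}$ over the finitely many relevant $\bar{a}$, finishes the step.

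The step I expect to be the main obstacle is the spectral-sequence bookkeeping, in two respects. First, one must phrase the inductive hypothesis so that it simultaneously yields constancy of exactly those intermediate direct images needed as input --- hypothesis (2) of Theorem~\ref{thm:spectralseqinput} --- for the outer $\Lambda_0$-direction; the right formulation should assert constancy on the sets $\RR(\sigma)$ of $R^{\ell}b_{\bullet,*}\bbQ$ throughout the range of $\ell$ relevant to computing $\HH^{p'}$ for $p'\le p$, which then feeds cleanly into the next iteration. Second, one must keep the stratifications of the $X\times\prod_iZ_i^{a_i}$ in the form of stratifications associated to filtrations, uniformly over the finitely many multidegrees $(k_0,\dots,k_{q-1})$ with $\sum_ik_i\le p$, so that the product-version of Theorem~\ref{thm:vcdensityapp} applies; since these multidegrees depend only on $p$ and $q$ and every constructible sheaf refines to such a stratification, this is bookkeeping rather than a genuine difficulty. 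The quantitative heart of the theorem --- the improvement from $q(p+1)$ to $p+q$ --- is exactly the observation that the multidegrees $(k_0,\dots,k_{q-1})$ contributing to $\HH^p$ satisfy $\sum_ik_i\le p$, not $k_i\le p$ for each $i$ separately, so the total $Z$-exponent is $q+\sum_ik_i\le p+q$.
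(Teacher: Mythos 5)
Your approach is a genuinely different, and substantially more elaborate, route than the one the paper takes. The paper's inductive step avoids cohomological descent and spectral sequences entirely: it writes $\bar{\Lambda}_{\bar{Z}'} = \bigcup_{i=1}^{n}\bar{\Lambda}_{\{z_{0,i}\}\times\bar{Z}''}$, asserts that this union is \emph{disjoint}, so that $\HH^{*}$ of both the source and target splits as a direct sum over $i$ (and hence so does the kernel); it then applies the inductive hypothesis, with $(\Lambda_{-1},Y_{-1},Z_{-1})$ replaced by the product $(\Lambda_{-1}\times\Lambda_0,\,Y_{-1}\times Y_0,\,Z_{-1}\times Z_0)$ and $z_{-1}$ by $(z_{-1},z_{0,i})$, once for each $i\in\{1,\dots,n\}$, and takes the union of the $n$ resulting families of $\tilde{N}\le C\,n^{p+q-1}$ subschemes, giving $N=n\tilde{N}\le C\,n^{p+q}$. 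There is no spectral-sequence bookkeeping in the inductive step at all; the multiplicative gain from $q(p+1)$ to $p+q$ comes purely from the product structure of $\bar Z'$ and additivity of $\HH^{*}$ over a disjoint union.

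Your plan instead re-runs the $q=1$ machinery inside the inductive step: coskeleton resolution of the $Z_0$-factor, the descent spectral sequence, and Theorem~\ref{thm:spectralseqinput} to propagate constancy from $\rE_1$-terms to the abutment. You do locate the quantitative crux correctly --- only columns $k_0\le p$ contribute and the multidegrees satisfy $\sum_i k_i\le p$ rather than $k_i\le p$ separately, giving total $Z$-exponent $q+\sum_i k_i\le p+q$ --- but the price is real. Your inductive hypothesis must be strengthened (constancy-on-strata of $R^{\ell}b_{\bullet,*}\bbQ$ for all $\ell\le p-k_0$, not just the degree-$p$ kernel), and crucially the induction must be invoked at cohomological degree $p-k_0$, not $p$, in the column indexed by $k_0$, so that the per-column count $O(n^{k_0+1})\cdot O(n^{(p-k_0)+(q-1)})$ stays $O(n^{p+q})$; as written, keeping the degree fixed at $p$ would produce $O(n^{2p+q})$. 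You flag both of these as bookkeeping, which is fair, but they do mean the sketch does not close off-the-shelf. One compensating benefit of your route: the paper's disjointness assertion requires the fibers $\Lambda_{0,z_{0,i}}$ to be pairwise disjoint in $Y_0$ for distinct $z_{0,i}$, which is not a stated hypothesis and fails for natural correspondences (e.g.\ a point--line incidence $\Lambda_0$); the descent-based argument does not need it.
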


\begin{proof}
We will use induction on $q$. The case $q=1$ is included in (the proof of) 
Theorem~\ref{thm:main}
(the penultimate step).

Let  
\begin{eqnarray*}
\tilde{Z}_{-1} &=& Z_{-1} \times Z_0, \\ 
\tilde{Y}_{-1} &=& Y_{-1} \times Y_0, \\ 
\tilde{\Lambda}_{-1} &=& \Lambda_{-1} \times \Lambda_0. 
\end{eqnarray*}

We now apply inductive hypothesis 
to the correspondences $\tilde{\Lambda}_{-1},\Lambda_1,\ldots,\Lambda_{q-1},H$,
to obtain a correspondence $[\Gamma; X \times V]$. Now suppose that
$z_{-1} \in Z_{-1}$, $Z_0' = \{z_{0,1}, \ldots, z_{0,n} \} \subset_n Z_0$,  and $Z_1' \subset_n Z_1, \ldots, Z_{q-1}' \subset_n Z_{q-1}$. Let $\bar{Z}'' = Z_1' \times \cdots \times Z_{q-1}'$.

We first observe that 
$$
\bar{Z}' =  \bigcup_{1 \leq i \leq n} \{z_{0,i}\} \times \bar{Z}'',
$$
and
$$
\bar{\Lambda}_{\bar{Z}'} = \bigcup_{1 \leq i \leq n} \bar{\Lambda}_{\{z_{0,i}\} \times \bar{Z}''},
$$
and for each $x \in X$,
$$
\leftindex_{x}
{\bar{\Lambda}}_{\bar{Z}'} = \bigcup_{1 \leq i \leq n} \leftindex_{x} {\bar{\Lambda}}_{\{z_{0,i}\} \times \bar{Z}''}.
$$
and these unions are disjoint.

So,
\[
\HH^*(\bar{\Lambda}_{\bar{Z}'},\Q) \cong  \bigoplus_{1 \leq i \leq n} \HH^*(\bar{\Lambda}_{\{z_{0,i}\} \times \bar{Z}''},\Q),
\]
and
\[
\HH^*(\leftindex_{x}{\bar{\Lambda}}_{\bar{Z'}},\Q) \cong \bigoplus_{1 \leq i \leq n} \HH^*(\leftindex_{x}{\bar{\Lambda}}_{\{z_{0,i}\} \times \bar{Z}''},\Q).
\]

By induction hypothesis, there exists $C > 0$ such that 
for each pair
$(z_{-1}, z_{0,i}) \in \tilde{Z}_{-1}$, 
there exists closed points $\tilde{v}_{1,i} ,\ldots, \tilde{v}_{\tilde{N},i}  \in V$, 
with $\tilde{N} \leq  C  \cdot n^{p+q-1}$,
such that for each 
$0/1$-pattern $\tilde{\sigma}_i$ on the set  $\{\Gamma_{\tilde{v}_{1,i}},\ldots,\Gamma_{\tilde{v}_{\tilde{N},i}} \}$,
of sub-schemes of $X$,
\[
\ker\left(
\HH^p\left({\bar{\tilde{\Lambda}}}_{\{(z_{-1},z_{0,i})\} \times \bar{Z}''},\bbQ\right)
    \longrightarrow
    \HH^p\left(\leftindex_{x}{\bar{\tilde{\Lambda}}}_{\{(z_{-1},z_{0,i})\} \times \bar{Z}''},\bbQ\right)
    \right)
\]
is constant as $x$ varies over $\RR(\tilde{\sigma}_i) \subset X$, and where $\bar{\tilde{\Lambda}} = (\tilde{\Lambda}_{-1},\Lambda_1,\ldots,\Lambda_{q-1})$.

Now let $V' \subset_N V$, with $N = n \times \tilde{N} \leq C \cdot n^{p+q}$ be defined by
\[
V' =  \bigcup_{1 \leq i \leq n, 1 \leq j \leq \tilde{N}} \{\tilde{v}_{j,i}\}.
\]

It follows from the induction hypothesis that for each $0/1$-pattern $\tilde{\sigma}_i$, on 
\[
\{\Gamma_{\tilde{v}_{1,i}}, \ldots, 
\Gamma_{\tilde{v}_{\tilde{N},i}}\},
\]
and $x \in \RR(\tilde{\sigma}_i)$,
the kernels of the homomorphisms
\[
\HH^p(\bar{\Lambda}_{\{z_{-1}\} \times \{z_{0,i}\} \times Z''},\Q)
\rightarrow
\HH^p(\leftindex_{x}{\bar{\Lambda}}_{\{z_{-1}\} \times \{z_{0,i}\} \times Z''},\Q)
\]
remain constant. This implies that for each $0/1$-pattern $\sigma$ on

\[
\bigcup_{1 \leq i \leq n, 1 \leq j \leq \tilde{N}}\{\Gamma_{\tilde{v}_{j,i}}\}
\]
and $x \in \RR(\sigma)$,
the kernels of the homomorphisms
\[
\bigoplus_{i} \HH^p(\bar{\Lambda}_{\{z_{-1}\} \times \{z_{0,i}\} \times \bar{Z}''},\Q)
\rightarrow
\bigoplus_{i}\HH^p(\leftindex_{x}{\bar{\Lambda}}_{\{z_{-1}\} \times \{z_{0,i}\} \times \bar{Z}''},\Q)
\]
(where the left hand side is isomorphic to $\HH^p(\bar{\Lambda}_{\{z_{-1}\} \times \bar{Z}'},\Q)$, and the right hand side is isomorphic to  
$\HH^p(\leftindex_{x}{\bar{\Lambda}}_{\{z_{-1}\} \times \bar{Z}'},\Q)$)
stay constant as well. 
This completes the induction. 
\end{proof}

\begin{proof}[Proof of 
Theorem~\ref{thm:main:general}]
    Follows from Theorem~\ref{thm:main:complex:general'} using bounds on 
    number of $0/1$-patterns of definable families of subsets
    in o-minimal structures as well as over algebraically closed fields 
    (dual version of \eqref{eqn:vcd-bound}).
\end{proof}

\begin{proof}[Proof of Theorem~\ref{thm:informal:general}]
    Follows immediately from Theorem~\ref{thm:main:general}.
\end{proof}

\section{Proofs of Theorems~\ref{thm:epsilon-nets-degree-p} and \ref{thm:fractional-helly-degree-p}}
\label{sec:proof:applications}

\subsection{Proof of Theorem~\ref{thm:epsilon-nets-degree-p} 
}
In the proof of Theorem~\ref{thm:epsilon-nets-degree-p}  
we will use the following lemma.

\begin{lemma}
\label{lem:unique}
Let $p \geq 0$, and $(Y,\mathcal{X},\mathcal{Z})$ be a triple satisfying
the hypothesis of Theorem~\ref{thm:epsilon-nets-degree-p}.
Let 
$\mathcal{Z}_0 \subset \mathcal{Z}$ a finite subset such that
$\mathcal{Z}_0$ is in $p$-general position, 
and $X \in \mathcal{X}$.  Then the 
kernel of the homomorphism 
\[
\HH^p(\bigcup_{Z \in \mathcal{Z}_0} Z, \bbQ) \rightarrow 
\HH^p(\bigcup_{Z \in \mathcal{Z}_0} X\cap Z, \bbQ)
\]
uniquely determines the set
\[
\{Z \in \mathcal{Z}_0 \;\mid \; Z \in_p X\}. 
\]
\end{lemma}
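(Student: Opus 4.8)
The plan is to exploit the $p$-general position hypothesis to split $\HH^p$ of the two unions appearing in the statement as direct sums indexed by the members of $\mathcal{Z}_0$, and then to recover the set $\{Z\in\mathcal{Z}_0\mid Z\in_p X\}$ from the kernel by pure linear algebra. Write $\mathcal{Z}_0=\{Z_1,\dots,Z_m\}$, set $W:=\bigcup_{i=1}^m Z_i$ and $W_X:=\bigcup_{i=1}^m(Z_i\cap X)$, and let $\rho\colon\HH^p(W,\bbQ)\to\HH^p(W_X,\bbQ)$ be the restriction map in question. All of $W$, $W_X$, the $Z_i$, and the $Z_i\cap X$ are closed semi-algebraic subsets of $\bbR^d$, hence triangulable; consequently their sheaf cohomology is the usual one, it vanishes in degrees above their dimension, and the Mayer--Vietoris sequence for a closed cover by two sets is available to us.

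The one substantial step is to show that the restriction maps $\HH^p(W,\bbQ)\to\HH^p(Z_i,\bbQ)$ assemble into a \emph{canonical} isomorphism $\phi\colon\HH^p(W,\bbQ)\xrightarrow{\sim}\bigoplus_{i=1}^m\HH^p(Z_i,\bbQ)$ depending only on $\mathcal{Z}_0$, that the analogous restrictions give an isomorphism $\phi_X\colon\HH^p(W_X,\bbQ)\xrightarrow{\sim}\bigoplus_{i=1}^m\HH^p(Z_i\cap X,\bbQ)$, and that these two isomorphisms are compatible with $\rho$, i.e.\ $\phi_X\circ\rho=\big(\bigoplus_i r_i\big)\circ\phi$ where $r_i\colon\HH^p(Z_i,\bbQ)\to\HH^p(Z_i\cap X,\bbQ)$ is restriction. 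I would prove this by induction on $m$, applying ordinary Mayer--Vietoris to $A=Z_m$ and $B=\bigcup_{i<m}Z_i$. The overlap $A\cap B=\bigcup_{i<m}(Z_i\cap Z_m)$ is a finite union of sets each of dimension $<p-1$ --- this is precisely where $p$-general position (Definition~\ref{def:p-general-position}), applied to the pairs $\{Z_i,Z_m\}$, is used --- so $\HH^{p-1}(A\cap B,\bbQ)=\HH^p(A\cap B,\bbQ)=0$, and the Mayer--Vietoris exact sequence collapses to give that $\HH^p(W,\bbQ)\to\HH^p(Z_m,\bbQ)\oplus\HH^p(B,\bbQ)$ is an isomorphism via restriction; plugging in the inductive hypothesis for $B$ produces $\phi$. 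Naturality of Mayer--Vietoris along the inclusion of covered pairs $\big(Z_m\cap X,\ \bigcup_{i<m}(Z_i\cap X)\big)\hookrightarrow\big(Z_m,\ \bigcup_{i<m}Z_i\big)$ (whose overlaps are only smaller) yields $\phi_X$ and the required compatibility with $\rho$ inside the same induction.

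Granting this, the conclusion follows at once. Setting $K:=\ker(\rho)$, the compatibility gives $\phi(K)=\bigoplus_{i=1}^m\ker(r_i)$. By Notation~\ref{not:pitchfork}, $Z_i\in_p X$ means $r_i\neq 0$, i.e.\ $\ker(r_i)\subsetneq\HH^p(Z_i,\bbQ)$; equivalently, $Z_i\notin_p X$ holds exactly when the $i$-th summand $\HH^p(Z_i,\bbQ)$ is contained in $\phi(K)$. Since $\phi$ and the internal direct sum $\bigoplus_i\HH^p(Z_i,\bbQ)$ depend only on $\mathcal{Z}_0$ and not on $X$, the set
\[
\{\,Z\in\mathcal{Z}_0\mid Z\in_p X\,\}=\{\,Z_i\mid\HH^p(Z_i,\bbQ)\not\subseteq\phi(K)\,\}
\]
is determined by $K$, which is the assertion. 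The main thing to be careful about is the dimension bookkeeping in the inductive step --- verifying that $p$-general position really forces the pairwise intersections $Z_i\cap Z_j$ into dimension below $p-1$, so that the connecting homomorphisms in Mayer--Vietoris vanish --- together with the (routine, given semi-algebraicity) facts that the spaces involved are tame enough for cohomology to vanish above their dimension and for Mayer--Vietoris to apply. Everything after the direct-sum decomposition is purely formal.
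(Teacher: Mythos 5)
Your proof is correct and reaches the same direct-sum decomposition $\HH^p(\bigcup Z,\bbQ)\cong\bigoplus_{Z\in\mathcal{Z}_0}\HH^p(Z,\bbQ)$ (and its intersected analogue) as the paper's argument, but by a genuinely different tool: the paper runs the Mayer--Vietoris spectral sequence of the closed cover $(Z)_{Z\in\mathcal{Z}_0}$ and uses $p$-general position to kill the $\rE_1^{j,p-j}$ and $\rE_1^{j,p-j+1}$ terms for $j\geq 1$, thereby exploiting dimension bounds on $(j+1)$-fold intersections for all $j\leq p$, whereas you induct on $\card(\mathcal{Z}_0)$ with ordinary two-set Mayer--Vietoris and need only the pairwise bound $\dim(Z_i\cap Z_j)<p-1$, since the overlap $A\cap B$ at each inductive stage is a finite union of pairwise intersections and so inherits that dimension bound. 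Your route is more elementary and uses less of the $p$-general position hypothesis; the price is that compatibility of the two isomorphisms with $\rho$ must be tracked through the induction via naturality of Mayer--Vietoris, which you correctly flag as the delicate step. One further small economy: the paper explicitly invokes surjectivity of each restriction $\HH^p(Z_i,\bbQ)\to\HH^p(X\cap Z_i,\bbQ)$ (deduced from $\dim Z_i\leq p$) before reading off the set from the kernel, but your final observation --- that $Z_i\in_p X$ fails exactly when the $i$-th summand $\HH^p(Z_i,\bbQ)$ sits inside the kernel --- works without any surjectivity assumption, since the kernel is itself a direct sum of its intersections with the summands.
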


\begin{proof}

    Consider the Mayer-Vietoris spectral sequence $\rE_r^{j,i}$ (respectively $\rE_r'^{j,i}$)  corresponding to the covering of $\bigcup_{Z \in \mathcal{Z}_0} Z$
    (respectively $\bigcup_{Z \in \mathcal{Z}_0} X \cap Z$) by
    $(Z)_{Z \in \mathcal{Z_0}}$ (respectively $(X \cap Z)_{Z \in \mathcal{Z_0}})$, and note the following.

        There is a canonical morphism of spectral sequences
        $r: \rE_r^{j,i} \rightarrow \rE_r'^{j,i}$ induced by restriction.
        \item Since $\mathcal{Z}_0$ is in $p$-general position,
        for any $j+1$ distinct elements $Z_0,\ldots,Z_{j} \in \mathcal{Z}_0$, 
        \[
        \dim Z_0 \cap \cdots \cap Z_j<  p - j,
        \]
        and hence 
        \[
        \HH^{p-j+1}(Z_0\cap \cdots\cap Z_j,\bbQ) = 0.
        \]
        This implies that
        \[
        \rE^{j,p-j+1} = {\rE'}_1^{j,p-j+1} = 0,
        \]
        for all $i \geq 0$, and hence
        
        \[
        \rE_1^{0,p} \cong  \bigoplus_{Z \in \mathcal{Z}_0} \HH^p(Z,\bbQ)
        \]
        and  
        \[
        {\rE'}_1^{0,p} \cong \bigoplus_{Z \in \mathcal{Z}_0} \HH^p(X\cap Z,\bbQ)
        \]
        stabilize already at the first page i.e.,
        \[
        \rE_\infty^{0,p} = \rE_1^{0,p}, {\rE'}_\infty^{0,p} = {\rE'}_1^{0,p}.
        \]

      Also, since $\mathcal{Z}_0$ is in $p$-general position, we have that
\[
\rE_1^{1,p-1} = \cdots = \rE_1^{p,0} = 0,
\]
and 
\[
{\rE'}_1^{1,p-1} = \cdots = {\rE_1'}^{p,0} = 0.
\]
\item Thus there are canonical isomorphisms,
\[
\HH^p(\bigcup_{Z \in \mathcal{Z}_0} Z, \bbQ) \rightarrow \rE_1^{0,p},
\]
\[
\HH^p(\bigcup_{Z \in \mathcal{Z}_0} X \cap Z, \bbQ) \rightarrow {\rE'}_1^{0,p}.
\]

The kernel       
    \[
    \ker(\rE_1^{0,p} \rightarrow {\rE'}_1^{0,p}).
    \]
    has a canonical splitting 
    \[
    \ker(\rE_1^{0,p} \rightarrow {\rE'}_1^{0,p}) \cong \bigoplus_{Z \in \mathcal{Z}_0} \ker(\HH^p(Z,\bbQ) \rightarrow \HH^p(X \cap Z, \bbQ)).
    \]

Now observe that 
    the hypothesis on the triple $(Y,\mathcal{X},\mathcal{Z})$
    implies that
    for each $Z \in \mathcal{Z}, X \in \mathcal{X}$,
    the homomorphism 
    \[
        \HH^p(Z,\bbQ) \rightarrow \HH^p(X \cap Z,\bbQ)
    \]
    is surjective. 
    
    This implies that the kernel 
    \[
    \ker(\rE_1^{0,p} \rightarrow {\rE'}_1^{0,p}) \cong \bigoplus_{Z \in \mathcal{Z}_0} \ker(\HH^p(Z,\bbQ) \rightarrow \HH^p(X \cap Z, \bbQ)).
    \]
    determines the set
    \[
\{Z \in \mathcal{Z}_0 \;\mid \; Z \in_p X\}. 
    \]

    On the other hand  
    \[
    \ker(\rE_1^{0,p} \rightarrow {\rE'}_1^{0,p}) \cong 
    \ker(\HH^p(\bigcup_{Z \in \mathcal{Z}_0} Z, \bbQ) \rightarrow 
\HH^p(\bigcup_{Z \in \mathcal{Z}_0} X \cap Z, \bbQ)).
    \]
This proves the lemma.

\end{proof}

\begin{proof}[Proof of Theorem~\ref{thm:epsilon-nets-degree-p}]
Let $\mathcal{Z}_0 = \{Z_0,\ldots,Z_n\} $, $J \subset [n] = \{0,\ldots,n\}$, and $X \in \mathcal{X}$. Let
\[
J_{X} = \{ j \in J \mid Z_j \in_p X\}.
\]

It follows from Lemma~\ref{lem:unique} and 
the definition of degree $p$ VC-density 
(Eqn.~\eqref{eqn:def:vcd:p}) 
that there exists $C > 0$, such that 
for all $\mathcal{Z}_0 = \{Z_0,\ldots,Z_n\} \subset \mathcal{Z}$, 
$J \subset [n]$ , that
\[
\card(\{J_X \mid  X \in \mathcal{X}\}) \leq C \cdot \card(J)^d,
\]
noting that  
$d >  \vcd^p_{\mathcal{X}}$.
The theorem now follows from \cite{Matousek-book}*{Theorem 10.2.4}.
\end{proof}

\subsection{Proof of Theorem~\ref{thm:fractional-helly-degree-p}}
\begin{proof}[Proof of Theorem~\ref{thm:fractional-helly-degree-p}]
Let $\mathcal{Z}_0 = \{Z_0,\ldots,Z_n\}  \subset_{n+1} \mathcal{Z}$.
For each $Z \in \mathcal{Z}$, let 
\[
\mathcal{X}_Z = \{X \in \mathcal{X} \;\mid\; Z \in_p X \}.
\]

It follows from Lemma~\ref{lem:unique} and the fact that
$\vcd_{\mathcal{X}}^p < d$, that 
$\vcd_{\mathcal{F}} < d$, where
\[
\mathcal{F} = \{\mathcal{X}_Z \;\mid\; Z \in \mathcal{Z}\}.
\]

Notice that for $J \subset [n]$,
\[
\bigcap_{j \in J} \mathcal{X}_{Z_j} \neq \emptyset 
\Leftrightarrow
\text{ there exists } X \in \mathcal{X} \text{ such that for all } j \in J,   Z_j \in_p X.
\]

The theorem now follows from Theorem~\ref{thm:fractional-Helly}.
\end{proof}

\section{Conclusions}
We have extended the concept of VC-density to a topological context. This work was motivated by a dual aim: to advance a theory of higher-order dependence within model theory and to establish higher-degree topological analogs of results in discrete geometry, such as the existence of $\eps$-nets and fractional Helly-type theorems. We have proven bounds on the generalized VC-density, representing significant generalizations of established bounds across several relevant theories. Additionally, some of our theorems extend beyond classical model theory, applying within the broader framework of scheme-theoretic correspondences. Below, we summarize  the key contributions of our paper.

\subsection{Key contributions}
\begin{enumerate}
    \item
As mentioned earlier, previous results on bounding VC densities of definable families in various structures mostly relied on proving bounds on the number of realized $0/1$-patterns (i.e. bounding the dual density), and the proofs of these bounds are quite different in different structures. For example, the proof in the algebraically closed case in \cite{RBG01} uses a linear algebra argument, while in the 
case of o-minimal theories \cite{Basu10}, the argument is more topological involving infinitesimal tubes and inequalities coming from the Mayer-Vietoris exact sequence. We recover these results
in this paper as a consequence of our main theorems in the special case where $p=0$, but the proof is now direct (bounding the density directly instead of the co-density) and uniform across all these structures.
\item
For $p > 0$, our main results 
Theorem~\ref{thm:informal} and Theorem~\ref{thm:main}
are all new. Even in the case
$p=0$, 
the complex and \'{e}tale versions of Theorem~\ref{thm:main}
are new. 
The \'{e}tale version of Theorem~\ref{thm:main}
in particular, generalizes 
    the result that the VC-codensity of families of hypersufaces of bounded degree $Y = \bbA_k^m$ over an arbitrary field
    $k$ is bounded by $m$. This statement is a consequence of the main result in \cite{RBG01} on bounding $0/1$ patterns of families of 
    hypersurfaces of bounded degrees.  
    The \'{e}tale version of Theorem~\ref{thm:main}
    extends this co-density bound to more general definable families
    on an arbitrary scheme $Y$ (the restriction of properness of $Y$
    is not a severe one in this context as one can always embed a given $Y$  in a proper scheme if needed (for example $\bbA^m$ into $\bbP^m$) and then apply the theorem). 
\item
    The bound in 
    Theorem~\ref{thm:main} is
    tight
    in each of the three versions of the theorem. We give constructions in Section~\ref{sec:tightness}.

\item
    Finally, we give several applications of our results. First we extend the notion of higher order VC-density to higher degree as well, and use
    our main theorem to prove a tight bound on these densities in each of the theories considered in this paper.
    We also give two examples of how our generalized notion of VC-density might lead to topological generalizations of well known results in discrete geometry -- namely, the existence of higher degree $\eps$-nets (that we define in this paper) of constant size, and a higher degree version of the fractional Helly theorem.
\end{enumerate}

\subsection{Future directions}
The homological perspective we adopt in this paper enables us to approach higher-dimensional incidence problems, where incidence is defined homologically rather than in set-theoretic terms. We do not pursue this line of inquiry further here,  and focus instead on higher-degree generalizations of other applications of VC-density bounds—specifically, the existence of $\eps$-nets and fractional Helly’s theorem. Nevertheless, we believe that homological formulations of incidence problems could open promising research avenues.

From a model-theoretic perspective, there has been prior work (by the authors) on incorporating cohomological concepts into first-order logic \cite{Basu-Patel2}. In \emph{loc. cit.} a notion of `cohomological quantifier elimination' is defined and shown to exist in certain theories. The guiding philosophy is that one should be able to express statements about cohomology classes within the framework of first-order logic and model theory, thereby extending model-theoretic notions like quantifier elimination into this broader context. The higher-degree VC-density definition we propose here aligns with this philosophy, and we aim to develop this approach further.

\bibliographystyle{amsplain}
\bibliography{ref}
 
\end{document}